\let\cite\citep
\newtheorem{assumption}{Assumption}
\newenvironment{acknowledgements}
{\section*{Acknowledgements}}
{}
\begin{document}

\title{Proximal methods for structured nonsmooth optimization over Riemannian submanifolds}

\author{\name Qia Li\thanks{These authors contributed equally to this work.} \email liqia@mail.sysu.edu.cn \\
       \addr School of Computer Science and Engineering\\
       Guangdong Province Key Laboratory of Computational Science\\
       Sun Yat-sen University\\
       Guangzhou, China
       \AND
       \name Na Zhang\footnotemark[\value{footnote}]        \thanks{Corresponding author.} \email nzhsysu@gmail.com  \\
       \addr Department of Applied Mathematics\\
       College of Mathematics and Informatics\\
       South China Agricultural University\\
       Guangzhou, China
       \AND
       \name Junyu Feng \email fengjy63@mail2.sysu.edu.cn\\
       \addr School of Computer Science and Engineering\\
       Sun Yat-sen University\\
       Guangzhou, China
       \AND
       \name Hanwei Yan \email yanhw5@mail2.sysu.edu.cn\\
       \addr School of Computer Science and Engineering\\
       Sun Yat-sen University\\
       Guangzhou, China}

\editor{My editor}

\maketitle

\begin{abstract}
In this paper, we consider a class of structured nonsmooth optimization problems over an embedded submanifold of a Euclidean space, where the first part of the objective is the sum of a difference-of-convex (DC) function and a smooth function, while the remaining part is a weakly convex function over a smooth function. This model problem has many important applications in machine learning and scientific computing, for example, the sparse Fisher discriminant analysis. We propose a manifold proximal-gradient-subgradient algorithm (MPGSA) and show that under mild conditions any accumulation point of the solution sequence generated by it is a critical point of the underlying problem. By assuming the Kurdyka-{\L}ojasiewicz property of an auxiliary function, we further establish the convergence of the full sequence generated by MPGSA under some suitable conditions. When the second component of the DC function involved is the maximum of finite continuously differentiable convex functions, we also propose an enhanced MPGSA with guaranteed subsequential convergence to a lifted B-stationary points of the optimization problem. Finally, some preliminary numerical experiments are conducted to illustrate the efficiency of the proposed algorithms.
\end{abstract}

\begin{keywords}
  nonsmooth optimization, manifold optimization, proximal algorithms, sequential convergence, B-stationary point
\end{keywords}

\section{Introduction}

\label{sec:intro}

	In modern machine learning and data science, many optimization models naturally involve manifold constraints and nonsmooth structures arising from sparsity. Nevertheless, these models often lead to highly nonconvex and nonsmooth formulations due to the presence of regularization terms and nonlinear geometric constraints. Consequently, developing efficient algorithms with solid convergence guarantees for such structured problems on manifolds has become a central topic in the intersection of machine learning and optimization. Motivated by these challenges, in this paper we consider the following structured nonsmooth and nonconvex minimization problem:
	\begin{equation} \label{problemn1}
		\min \limits_{x \in \mathcal{M}} \left\{ F(x) := h(x) - \frac{f(x)}{g(x)} + r(x) \right\},
	\end{equation}  
	where $\mathcal{M}$ is an embedded submanifold in $\mathbb{R}^n$, $h:\mathbb{R}^n \to \mathbb{R}$ is a difference-of-convex (DC) function, $r:\mathbb{R}^n\to\mathbb{R}$ and $g:\mathbb{R}^n\to (0,+\infty)$ are smooth, and $f:\mathbb{R}^n\to \mathbb{R}$ is possibly nonsmooth. More precisely, we make the following assumptions for problem \eqref{problemn1} throughout this paper.
	
	\begin{assumption} \label{ass1} 
		\indent
		\begin{enumerate}[label = {\upshape(\roman*)}]
			\item $\mathcal{M}$ is a smooth embedded submanifold in $\mathbb{R}^n$ of dimension $n$-$d$ with a nonnegative integer $d$.
			\item $h:= h_1-h_2$, where $h_1:\mathbb{R}^n \to \mathbb{R}$, $h_2:\mathbb{R}^n \to \mathbb{R}$ are both convex (possibly nonsmooth), and $h_1$ is Lipschitz continuous with the Lipschitz constant $L_1$.
			\item $g$ and $r$ are continuously differentiable (possibly nonconvex) with locally Lipschitz continuous gradients $\nabla g$ and $\nabla r$, respectively.
			\item $f$ is weakly convex with the modulus $\tau > 0$ on $\mathbb{R}^n$ and $g$ takes positive values on $\mathcal{M}$.
		\end{enumerate}
	\end{assumption}
	
	Note that here the smoothness, Lipschitz continuity and (weak) convexity of functions are interpreted in the ambient Euclidean space. The proposed model \eqref{problemn1} naturally arises in many machine learning problems involving manifold constraints and nonsmooth regularization. The following examples illustrate typical instances of model \eqref{problemn1}.\\

	\textbf{Example 1. Sparse Fisher discriminant analysis.} 
	Fisher discriminant analysis (FDA) \cite{Fisher1936} is an extensively used dimension reduction method for supervised classification. It seeks for optimal linear combination of features which maximizes the between-class variance over the within-class variance, thereby facilitating effective classification. Given an $n$-dimensional multi-class dataset, let $A_b \in \mathbb{R}^{n \times n}$ and $A_w \in \mathbb{R}^{n \times n}$ be the associated between-class covariance and within-class covariance matrix, respectively. It is well-known that $A_b$ is symmetric positive semidefinite, while $A_w$ is symmetric positive definite. Then, the FDA can be formulated into 
	\begin{equation} \label{model:FDA}
		\min \limits_{X\in \mathcal{S}_{n,p}} \left\{- \frac{\mathrm{tr}(X^T A_b X)}{\mathrm{tr}(X^T A_w X)} \right\},
	\end{equation}
	where $\mathrm{tr}(\cdot)$ denotes the trace of a square matrix and $\mathcal{S}_{n,p}:=\{X\in\mathbb{R}^{n\times p}:X^TX=I_p\}$ is the Stiefel manifold. However, the FDA may suffer from overfitting and computational inefficiency in high dimensional data scenarios. To address these challenges, the sparse FDA \cite{Clemmensen-Hastie-Witten-Ersboll:Technometrics:2011} introduces sparsity into the original FDA, which yields the following optimization problem
	\begin{equation} \label{model:SFDA}
		\min \limits_{X\in \mathcal{S}_{n,p}} \left\{ \lambda \varphi(X) - \frac{\mathrm{tr}(X^T A_b X)}{\mathrm{tr}(X^T A_w X)} \right\},
	\end{equation}
	where $\lambda > 0$ is a weighting parameter and $\varphi:\mathbb{R}^{n\times p}\to\mathbb{R}$ is a sparse regularization function. We note that many widely used $\varphi$ are actually DC functions, where the first component is the $\ell_1$ norm, e.g., the SCAD \cite{Fan-Li:JASA:2002}, MCP \cite{Zhang:TAS:2010}, capped $\ell_1$ function \cite{Zhang:JMLR:2010} and partial $\ell_1$ function \cite{Gotoh-Takeda-Tono:MP:2018}. With these choices of $\varphi$, problem \eqref{model:SFDA} is a special case of model \eqref{problemn1} with $\mathcal{M}=\mathcal{S}_{n,p}$, $h(X)=\lambda\varphi(X)$, $f(X)=\mathrm{tr}(X^TA_b X)$, and $g(X)=\mathrm{tr}(X^T A_w X)$. Moreover, direct verifications can show the Lipschitz continuity of the $\ell_1$ norm as well as the convexity of $\mathrm{tr}(X^TA_b X)$, and thus Assumption~\ref{ass1} is fulfilled for the sparse FDA problem \eqref{model:SFDA}. 
	
	\textbf{Example 2. Sparse sharpe ratio optimization.} Suppose that an investing strategy is represented by a portfolio $w\in\mathbb{R}^n$ of $n$ assets, $r>0$ denotes the risk-free return, $b\in[0,+\infty)^n$ and $Q=(q_{ij})_{n\times n}\in\mathbb{R}^{n\times n}$ stand for the expected return rate and its covariance matrix respectively. The classical sharpe ratio optimization problem can be formulated into
	\begin{equation}\label{ex2:1}
		\min_{w\in\mathcal{F}} \left\{-\frac{b^Tw-r}{\sqrt{w^TQw}}\right\},
	\end{equation}
	where $\mathcal{F}:=\left\{w\in\mathbb{R}^n:\sum\limits_{i=1}^n w_i=1, w_i\geq 0, i=1,2,\dots,n\right\}$ denotes the simplex set. Note that the numerator $b^Tw-r$ can take both nonnegative and negative values on the feasible set $\mathcal{F}$. In modern portfolio selection, sparse portfolios are more desirable for the sake of saving manage and financial costs. To obtain sparse portfolios, one may introduce the $\ell_{1/2}$-regularization into \eqref{ex2:1}, and resort to the following model:
	\begin{equation}\label{ex2:2}
		\min_{w\in\mathcal{F}} \left\{\lambda\|w\|^{1/2}_{1/2}-\frac{b^Tw-r}{\sqrt{w^TQw}}\right\},
	\end{equation}
	where $\|w\|_{1/2}=\left(\sum\limits_{i=1}^{n}|w_i|^{1/2}\right)^2$. Next we show the model \eqref{ex2:2} can be rewritten as a special case of \eqref{problemn1}. To this end, we introduce a variable $x\in\mathbb{R}^n$ such that $x_i^2=w_i$, for $i=1,2,\dots,n$. Replacing $w$ by $x$ in \eqref{ex2:2}, we obtain the following equivalent optimization problem 
	\begin{equation}\label{ex2:3}
		\min_{x\in \mathcal{S}_{n,1}} \left\{\lambda\| x\|_1-\frac{\sum\limits_{i=1}^n b_ix_i^2-r}{\sqrt{\sum\limits_{i=1}^n\sum\limits_{j=1}^n q_{ij}x_i^2x_j^2}}\right\}.
	\end{equation}
	It is easy to verify that \eqref{ex2:3} is a special instance of \eqref{problemn1} and satisfies Assumption~\ref{ass1}.
	
	\textbf{Example 3. Sparse $\ell_1$ norm principal component analysis.} 
	Sparse principal component analysis (PCA) \cite{Zou-Hastie-Tibshirani:JCGS:2006, Zou-Hastie-Tibshirani:PIEEE:2018} aims to enhance the interpretability of the classical PCA method by seeking for principle components of the given data with very few nonzero elements. Concretely, let $B \in \mathbb{R}^{n \times m}$ be the data matrix, where $m$ and $n$ denote the number of samples and dimension of the features, respectively. Assume that the sample mean of $B$ is zero. Then, the sparse PCA is commonly formulated as 
	\begin{equation} \label{model:SPCA}
		\min \limits_{X\in \mathcal{S}_{n,p}} \left\{- \mathrm{tr}(X^T BB^T X) + \mu \| X \|_1 \right\},
	\end{equation}
	where $\mu > 0$ is a weighting parameter and $\| \cdot \|_1$ denotes the $\ell_1$ norm. Although the sparse PCA has achieved great success in many applications, it is prone to be adversely affected by outliers. To alleviate this issue, the sparse $\ell_1$-norm PCA has been proposed \cite{Meng-Zhao-Xu:PR:2012}, which can be written as
	\begin{equation} \label{model:L1SPCA}
		\min \limits_{X\in \mathcal{S}_{n,p}} \left\{- \| B^T X \|_1 + \mu \| X \|_1 \right\}.
	\end{equation}
	Obviously, by specifying $\mathcal{M}=\mathcal{S}_{n,p}, h_1(X) = \mu \| X \|_1, h_2(X) = \| B^T X \|_1, \text{and}\ f = r = 0$, model \eqref{problemn1} reduces to sparse $\ell_1$-norm PCA problem \eqref{model:L1SPCA}, which also satifies Assumption~\ref{ass1}.

	The nonsmooth manifold optimization has drawn a lot of attentions in the recent years. Various methods have been designed for nonsmooth Riemannian manifold optimization, such as Riemannian gradient sampling algorithms \cite{Grohs-Hosseini:ACM:2016,Hosseini-Huang-Yousefpour:SIOPT:2018,Hosseini-Uschmajew:SIOPT:2017}, Riemannian subgradient-type algorithms     \cite{Borckmans-Selvan-Boumal-Absil:JCAM:2014,Ferreira-Louzeiro-Prudente:Opt:2019,Ferreira-Oliveira:OTA:1998,Li-Chen-Deng-Qu-Zhu-So:SIOPT:2021}, Riemannian proximal point methods \cite{De-Glaydston-Da-Oliveira:OTA:2016,Chen-Ma-Anthony-Zhang:SIOPT:2020,Huang-Wei:MP:2022,Si-Absil-Huang-Jiang-Vary:SIOPT:2024, Wang-Liu-Chen-Ma-Xue-Zhao:IJOO:2021, Wang-Ma-Xue:JMLR:2022}, operator splitting methods \cite{Chen-Ji-You:JSC:2016,Lai-Osher:JSC:2014,Zhou-Bao-Ding-Zhu:MP:2023},  infeasible manifold optimization methods \cite{Hu-XIao-Liu-Toh:arXiv:2022,Liu-Xiao-Yuan:JSC:2024,Xiao-Liu-Yuan:SIOPT:2021} and so on. It is worth noting that many works focus on solving manifold optimization problems whose objective function is the sum of a nonsmooth convex function and a smooth function.~This kind of problems can be viewed as particular cases of problem \eqref{problemn1} with $h_2=f=0$ and $g \equiv 1$. However, due to the presence of the nonsmooth nonconvex terms $-h_2$ and $-f/g$, the aforementioned manifold optimization methods can not be directly applied to solving problem \eqref{problemn1} in general cases. For instance, the direct application of ManPG algorithm \cite{Chen-Ma-Anthony-Zhang:SIOPT:2020} requires addressing a nonconvex and nonsmooth subproblem involving the two nonsmooth terms in each iteration, which is generally expensive and difficult to solve. Although ADMM-type methods can be facilitated to split the nonsmooth terms, the resulting algorithms usually have no convergence guarantee, to the best of our knowledge. Recently, proximal algorithms and Riemannian difference-of-convex algorithms (DCA) are developed in \cite{almeida2020modified,bergmann2024difference,souza2015proximal} for solving DC minimization problems over Hadamard manifolds. Unfortunately, these methods can not be applied to problem \eqref{problemn1} generally. On one hand, since the convexity considered in these works is not the convexity in the Euclidean space as in our setting but the geodesic convexity, their algorithms can not well tackle problem \eqref{problemn1} even when the nonsmooth ratio term $-f/g$ is absent. On the other hand, the underlying manifolds considered in these methods are restricted to be Hadamard manifolds rather than general Riemannian submanifolds as in problem \eqref{problemn1}. The above analysis forms the basic motivation of our work. We aim at developing efficient algorithms for solving problem \eqref{problemn1} by simultaneously exploiting the intrinsic structure of the objective and manifold. 
	
	Specifically, the contributions of this paper are summarized as follows. First, we introduce several notions of stationary points for nonsmooth DC and fractional optimization over a Riemannian submanifold and examine their relationships. Second, we propose a novel algorithm called manifold proximal-gradient-subgradient algorithm (MPGSA) for solving problem \eqref{problemn1} and establish its subsequential convergence towards a critical point of the problem under a mild assumption. To the best of our knowledge, the MPGSA is the first manifold optimization algorithm which can directly deal with problem \eqref{problemn1}. In particular, when $h_2=f=0$, $g \equiv 1$ and $\mathcal{M}$ is the Stiefel manifold, the MPGSA reduces to the ManPG method proposed in \cite{Chen-Ma-Anthony-Zhang:SIOPT:2020} and its subconvergence results follows immediately. Third, the convergence of the full sequence generated by MPGSA is established when the algorithm is equipped with a properly chosen stepsize, under the additional assumptions that $f$ is convex with a continuous conjugate function and a certain auxiliary function satisfies the Kurdyka–Łojasiewicz (KL) property. It is worth noting that the introduction of retraction operators poses inherent challenges in proving full sequential convergence for manifold proximal algorithms under KL assumptions, to the extent that even for the ManPG algorithm, full sequential convergence has not yet been established. In our convergence analysis, we overcome these challenges by ingeniously constructing an auxiliary function that incorporates the indicator function of the tangent bundle. As a beneficial byproduct, our convergence results also imply full sequential convergence of the ManPG algorithm. Finally, we develop an enhanced MPGSA (EMPGSA) that has subsequential convergence towards a lifted B-stationary point of problem \eqref{problemn1}, in the case where $h_2$ is the maximum of finite continuously differentiable convex functions.

	In addition to the aforementioned contributions, we would like to emphasize that our algorithmic developments and convergence analysis are not merely straightforward extensions of existing results in Euclidean fractional programming; they remain novel even when the underlying manifold reduces to the Euclidean space. Indeed, existing Euclidean fractional optimization methods (see, e.g. \cite{Boct-Dao-Li:SIOPT:2023,Shen-Yu:IEEE:TSP:2018}) handle the fractional objective in (1) by leveraging the square root of the numerator, i.e., $\sqrt{f}$, which requires the non-negativity of $f$ and the weak convexity of $\sqrt{f}$ over the feasible set. As a result, these methods cannot be applied to fractional programs where the numerator may take negative values on the feasible set, e.g., the aforementioned Example 2. In contrast, our algorithmic framework does not rely on $\sqrt{f}$, but instead builds upon the weak convexity of $f$ itself, thereby extending the applicability of our approach to many fractional programs arising in practical settings. Furthermore, under some KL assumption, we establish sequential convergence of the proposed MPGSA in cases where $f$ is either smooth or convex with a continuous conjugate. This result goes beyond existing Euclidean fractional optimization approaches, for which sequential convergence has previously been established only in the smooth case.
	
	The remaining parts of this paper are organized as follows. In section 2, we introduce notation and present some preliminaries. In section 3, we discuss different versions of stationarity for problem \eqref{problemn1}. In section 4 and 5, we propose MPGSA and EMPGSA, and establish their subsequential convergence. The sequential convergence of MPGSA is further analyzed in section 6. Numerical experiments are presented in section 7. Finally, we conclude this paper in section 8.
	
	\section{Notation and Preliminaries.}
	We begin with our preferred notations. We denote by $\mathbb{R}^n$ the $n$-dimensional Euclidean space. Here, $\mathbb{R}^n$ does not only refer to a vector space, but also can refer to a matrix space or a tensor space. The Euclidean inner product of $x\in\mathbb{R}^n$ and $y\in \mathbb{R}^n$ is denoted by $\langle x,y \rangle$, and the Frobenius norm of $x\in\mathbb{R}^n$ is denoted by $\|x\|=\sqrt{\langle x,x \rangle}$. For $x\in\mathbb{R}^n$, we use $\mathcal{B}(x)$ to denote a neighborhood of $x$, while use $\mathcal{B}(x,\delta)$ (resp., $\overline{\mathcal{B}}(x,\delta)$) to denote an open (resp., closed) ball centered at $x$ with radius $\delta>0$. The adjoint operator of a linear operator $\mathcal{A}:\mathbb{R}^n\to\mathbb{R}^d$ is denoted by $\mathcal{A}^{*}$. Let $\mathrm{dist}(x,S) := \inf \{\|x-y\|:y\in S\}$ be the distance from $x\in \mathbb{R}^n$ to the set $S \subseteq \mathbb{R}^n$. For a positive integer $d$, let $0_d$ and $I_d$ denote the $d$-dimensional zero vector and $d\times d$ identity matrix, respectively. For a function $\varphi:\mathbb{R}^n\rightarrow \mathbb{R}$, we use $\varphi\vert_\mathcal{M}$ to denote the restriction of $\varphi$ on $\mathcal{M}$.
	
	\subsection{Riemannian submanifold of $\mathbb{R}^n$.}
	We first review the standard definition of an embedded submanifold. Roughly speaking, an embedded submanifold in $\mathbb{R}^n$ is either an open subset or a smooth surface in the space \cite{Absil-Mahony-Sepulchre:Opt_Matrix_Man:2009,Boumal:Intro_Opt_Man:2023}. To this end, we first recall the differential operator of a vector-valued function. Let $\psi:\mathcal{O}\subseteq \mathbb{R}^n\rightarrow \mathbb{R}^m$ be defined on an open subset $\mathcal{O}$ of $\mathbb{R}^n$. We say that $\psi$ is smooth if it is infinitely differentiable on its domain. If $\psi$ is smooth, then for any $x\in\mathcal{O}$, the differential of $\psi$ at $x$ is the linear map $\mathrm{D} \psi(x):\mathbb{R}^n \rightarrow \mathbb{R}^m$ defined by $D\psi(x)(v)=\lim \limits_{t\rightarrow0}\frac{\psi(x + tv)-\psi(x)}{t}$, for all $v\in\mathbb{R}^n$.
	\begin{definition}\label{definition:embeddedSubmanifold}(embedded submanifolds of $\mathbb{R}^n$\cite{Boumal:Intro_Opt_Man:2023})
		Let $\mathcal{M}$ be a subset of a Euclidean space $\mathbb{R}^n$. We say $\mathcal{M}$ is a (smooth) embedded submanifold of $\mathbb{R}^n$ of dimension $m$ if either $m=n$ and $\mathcal{M}$ is an open subset of $\mathbb{R}^n$ or $m=n-d$ for a fixed integer $d\geq 1$ and for each $x\in \mathcal{M}$, there exists a neighborhood $\mathcal{B}(x)$ in $\mathbb{R}^n$ and a smooth function $\psi:\mathcal{B}(x)\to\mathbb{R}^d$ such that
		\begin{itemize}
			\item[(a)] if $y$ is in $\mathcal{B}(x)$, then $\psi(y)=0_d$ if and only if $y\in\mathcal{M}$, and
			\item[(b)] $\mathrm{rank}~\mathrm{D}\psi(x)=d$.
		\end{itemize}
		Such a function $\psi$ is called a local defining function for $\mathcal{M}$ at $x$.
	\end{definition}
	
	Let $\mathcal{M}$ be an embedded submanifold of $\mathbb{R}^n$ of dimension $m$. Then, for each $x \in \mathcal{M}$ there exist a neighborhood $U$ of $x$, an open subset $\hat{U} \subseteq \mathbb{R}^m$ and a map $\phi:\ U \rightarrow \hat{U}$ such that $\phi$ is a homeomorphism. The pair $(U, \phi)$ is called a chart. If $\mathcal{M}$ is an open submanifold, then the tangent space to $\mathcal{M}$ at $x\in\mathcal{M}$, denoted by $\mathrm{T}_x\mathcal{M}$, is $\mathbb{R}^n$. Otherwise, $\mathrm{T}_x\mathcal{M}=\mathrm{ker}~\mathrm{D}\psi(x)$ with $\psi$ any local defining function at $x$ \cite[Theorem 3.15]{Boumal:Intro_Opt_Man:2023}. The normal space to $\mathcal{M}$ at $x\in\mathcal{M}$, denoted by $\mathrm{N}_x\mathcal{M}$, is defined as the orthogonal complement space of $\mathrm{T}_x\mathcal{M}$ \cite{Si-Absil-Huang-Jiang-Vary:SIOPT:2024}. It should be noted that $\mathcal{M}$ is regular at $x\in\mathcal{M}$, and the tangent and normal cones to $\mathcal{M}$ at $x$ are exactly $\mathrm{T}_x\mathcal{M}$ and $\mathrm{N}_x\mathcal{M}$, respectively \cite[Example 6.8]{Rockafellar-Wets:VariationalAnalysis:2009}. Since the tangent space to $\mathcal{M}$ at $x$ is a vector space, one can equip it with an inner product or a metric. A manifold whose tangent spaces are endowed with a smoothly varying metric is referred to as a Riemannian manifold. If the manifold $\mathcal{M}$ is an embedded submanifold of $\mathbb{R}^n$ equipped with the Riemannian metric obtained by restriction of the metric of $\mathbb{R}^n$, then $\mathcal{M}$ is called a Riemannian submanifold of $\mathbb{R}^n$ \cite[Definition 3.55]{Boumal:Intro_Opt_Man:2023}. Throughout this paper, we assume the manifold $\mathcal{M}$ is a Riemannian submanifold of $\mathbb{R}^n$.
	
	The tangent bundle of a manifold $\mathcal{M}$ is the disjoint union of the tangent spaces to $\mathcal{M}$: $\mathrm{T}\mathcal{M}=\{(x,v):x\in\mathcal{M}\mbox{ and }v\in\mathrm{T}_x\mathcal{M}\}$ \cite[Definition 3.42]{Boumal:Intro_Opt_Man:2023}. If $\mathcal{M}$ is an embedded submanifold of $\mathbb{R}^n$ of dimension $n$-$d$, the tangent bundle $\mathrm{T}\mathcal{M}$ is an embedded submanifold of $\mathbb{R}^n\times\mathbb{R}^n$ of dimension $2(n-d)$ \cite[Theorem 3.43]{Boumal:Intro_Opt_Man:2023}. A retraction \cite[Definition 3.47]{Boumal:Intro_Opt_Man:2023} on a manifold is a smooth mapping $\mathrm{Retr}:\mathrm{T}\mathcal{M} \to \mathcal{M}$ such that:
	\begin{itemize}
		\item[(i)] $\mathrm{Retr}_x(0)=x$, where $0$ is the zero vector in $\mathrm{T}_x\mathcal{M}$;
		\item[(ii)] $\mathrm{D}\mathrm{Retr}_x(0)$ is the identity map,
	\end{itemize}
	where $\mathrm{Retr}_x:\mathrm{T}_x\mathcal{M} \rightarrow \mathcal{M}$ denotes the restriction of $\mathrm{Retr}$ to $\mathrm{T}_x\mathcal{M}$. The above definition also yields that 
	\begin{equation} \label{eq:Retr_definition_item2}
		\lim \limits_{0_n \neq v\in \mathrm{T}_x\mathcal{M} \atop 0_n\neq v\to 0_n} \frac{\|\mathrm{Retr}_x(v) - (x+v)\|}{\|v\|} = 0.
	\end{equation}
	%

	To end this subsection, we present some useful properties regarding the retraction.
	
	\begin{lemma} \label{lemma:Retr_property}
		Let $\mathcal{M}$ be an embedded submanifold of $\mathbb{R}^n$ and $\Lambda \subseteq \mathcal{M}$ be compact. Given $\delta>0$, the following statements hold.
		\begin{enumerate}[label = {\upshape(\roman*)}]
			\item There exist $W_1>0$ and $W_2>0$ such that for all $x\in\Lambda$ and $v \in \mathrm{T}_x\mathcal{M} \cap \overline{\mathcal{B}}(0_n,\delta)$
			\begin{align}
				\label{ieq:Retr_property1} \|\mathrm{Retr}_x(v) - x\| &\leq W_1 \|v\|,	\\
				\label{ieq:Retr_property2} \|\mathrm{Retr}_x(v) - x - v\| &\leq W_2\|v\|^2.
			\end{align}
			\item Let $\varphi:\mathbb{R}^n\to\mathbb{R}$ be locally Lipschitz continuous.  ex exists $L_{\varphi}>0$, such that for all $x\in\Lambda$ and $v\in\mathrm{T}_x\mathcal{M} \cap \overline{\mathcal{B}}(0_n,\delta)$,
			\begin{equation} \label{ieq:Retr_property_item2_3}
				| \varphi(\mathrm{Retr}_x(v))-\varphi(x+v) | \leq L_{\varphi}\|v\|^2.
			\end{equation}
			Suppose in addition that $\varphi$ is differentiable with a locally Lipschitz continuous gradient $\nabla \varphi$. Then there exists $L_{\nabla \varphi}>0$ such that for all $x\in\Lambda$ and $v \in \mathrm{T}_x\mathcal{M} \cap \overline{\mathcal{B}}(0_n,\delta)$,
			\begin{equation} \label{ieq:Retr_property_item2_4}
				\begin{aligned}
					| \varphi(x+v)-\varphi(x)-\langle \nabla\varphi(x),v \rangle | &\leq \frac{L_{\nabla \varphi}}{2} \|v\|^2,	\\
					| \varphi(\mathrm{Retr}_x(v))-\varphi(x)-\langle \nabla\varphi(x),v \rangle | &\leq \frac{L_{\nabla \varphi}+2L_\varphi}{2} \|v\|^2.
				\end{aligned}
			\end{equation}
		\end{enumerate}
	\end{lemma}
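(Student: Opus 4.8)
The plan is to route the entire proof through the uniform second-order estimate \eqref{ieq:Retr_property2}; once that is in hand, \eqref{ieq:Retr_property1} and both parts of item~(ii) drop out from elementary Lipschitz and descent-lemma bounds together with compactness. So first I would record that the set $K:=\{(x,v)\in\mathrm{T}\mathcal{M}:\ x\in\Lambda,\ \|v\|\le\delta\}$ is compact: it is bounded, and it is closed in $\mathbb{R}^n\times\mathbb{R}^n$ since whenever $(x_k,v_k)\to(x,v)$ with $(x_k,v_k)\in K$ we have $x\in\Lambda$ by compactness of $\Lambda$, and, choosing a local defining function $\psi$ at $x$ (so that $\mathrm{T}_{x_k}\mathcal{M}=\ker\mathrm{D}\psi(x_k)$ for all large $k$), the identities $\mathrm{D}\psi(x_k)v_k=0_d$ pass to the limit to give $v\in\ker\mathrm{D}\psi(x)=\mathrm{T}_x\mathcal{M}$, while $\|v\|\le\delta$ is immediate.

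For \eqref{ieq:Retr_property2} I would argue locally and then patch. Fix $\bar x\in\Lambda$. Because $\mathrm{T}\mathcal{M}$ is an embedded submanifold of $\mathbb{R}^n\times\mathbb{R}^n$ on which $(x,v)\mapsto\mathrm{Retr}_x(v)$ is smooth, a submanifold chart of $\mathrm{T}\mathcal{M}$ produces a neighborhood $\mathcal{W}$ of $(\bar x,0_n)$ in $\mathbb{R}^n\times\mathbb{R}^n$ and a smooth extension $F:\mathcal{W}\to\mathbb{R}^n$ of the retraction, i.e.\ $F(x,v)=\mathrm{Retr}_x(v)$ on $\mathcal{W}\cap\mathrm{T}\mathcal{M}$. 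For $(x,v)\in\mathrm{T}\mathcal{M}$ sufficiently close to $(\bar x,0_n)$ the whole segment $\{(x,tv):t\in[0,1]\}$ lies in $\mathcal{W}\cap\mathrm{T}\mathcal{M}$ since each fibre $\mathrm{T}_x\mathcal{M}$ is a linear subspace, so $\gamma(t):=\mathrm{Retr}_x(tv)=F(x,tv)$ is smooth with $\gamma(0)=x$ and, by item~(ii) of the definition of a retraction, $\gamma'(0)=\mathrm{D}\mathrm{Retr}_x(0_n)[v]=v$. Taylor's theorem with integral remainder then gives $\mathrm{Retr}_x(v)-x-v=\int_0^1(1-t)\gamma''(t)\,dt$ where $\gamma''(t)=\mathrm{D}^2F(x,tv)[(0_n,v),(0_n,v)]$, and bounding $\|\mathrm{D}^2F\|$ on a compact neighborhood of $(\bar x,0_n)$ yields $\|\mathrm{Retr}_x(v)-x-v\|\le C_{\bar x}\|v\|^2$ for $(x,v)\in K$ near $(\bar x,0_n)$. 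Covering $\Lambda$ by finitely many such neighborhoods produces $C>0$ and $\delta_0\in(0,\delta]$ with $\|\mathrm{Retr}_x(v)-x-v\|\le C\|v\|^2$ for $(x,v)\in K$ with $\|v\|\le\delta_0$, and on the compact remainder $\{(x,v)\in K:\ \|v\|\ge\delta_0\}$ the continuous map $(x,v)\mapsto\mathrm{Retr}_x(v)-x-v$ is bounded by some $M$, whence $\|\mathrm{Retr}_x(v)-x-v\|\le(M/\delta_0^2)\|v\|^2$ there; taking $W_2:=\max\{C,\,M/\delta_0^2\}$ proves \eqref{ieq:Retr_property2}, and then $\|\mathrm{Retr}_x(v)-x\|\le\|\mathrm{Retr}_x(v)-x-v\|+\|v\|\le(W_2\delta+1)\|v\|$ gives \eqref{ieq:Retr_property1}.

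For item~(ii) I would first observe that by \eqref{ieq:Retr_property1} all the points $\mathrm{Retr}_x(v)$, $x$, $x+v$ and the segments $[x,x+v]$ arising from $(x,v)\in K$ remain in a single closed ball $\Omega$; since a locally Lipschitz function on $\mathbb{R}^n$ is Lipschitz on the compact convex set $\Omega$, letting $\ell$ be a Lipschitz constant of $\varphi$ on $\Omega$ gives $|\varphi(\mathrm{Retr}_x(v))-\varphi(x+v)|\le\ell\|\mathrm{Retr}_x(v)-x-v\|\le\ell W_2\|v\|^2$, which is \eqref{ieq:Retr_property_item2_3} with $L_\varphi:=\ell W_2$. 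If moreover $\varphi$ is differentiable with locally Lipschitz gradient, let $L_{\nabla\varphi}$ be a Lipschitz constant of $\nabla\varphi$ on $\Omega$; the standard identity $\varphi(x+v)-\varphi(x)-\langle\nabla\varphi(x),v\rangle=\int_0^1\langle\nabla\varphi(x+tv)-\nabla\varphi(x),v\rangle\,dt$ yields the first inequality of \eqref{ieq:Retr_property_item2_4}, and adding it to \eqref{ieq:Retr_property_item2_3} via the splitting $\varphi(\mathrm{Retr}_x(v))-\varphi(x)-\langle\nabla\varphi(x),v\rangle=[\varphi(\mathrm{Retr}_x(v))-\varphi(x+v)]+[\varphi(x+v)-\varphi(x)-\langle\nabla\varphi(x),v\rangle]$ yields the second.

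The hard part will be the uniform second-order estimate: the retraction axioms only guarantee the first-order, pointwise-in-$x$ fact \eqref{eq:Retr_definition_item2}, so upgrading it to a quadratic bound uniform over the compact set $\Lambda$ is exactly where one needs the local smooth extension $F$ of the retraction into the ambient $\mathbb{R}^n\times\mathbb{R}^n$ together with a Taylor expansion restricted to a single tangent fibre. Everything afterwards---the finite covering, the choice of $\Omega$, and the Lipschitz and descent-lemma estimates---is routine.
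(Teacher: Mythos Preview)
Your proposal is correct and follows essentially the same route as the paper. For item~(i) the paper simply invokes \cite[Appendix~B]{Boumal-Absil-Cartis:JNA:2019} and omits details, whereas you supply a self-contained argument via a local smooth extension of the retraction, Taylor's theorem, and a finite cover; for item~(ii) your argument---pass to a fixed compact set containing all the points $x$, $x+v$, $\mathrm{Retr}_x(v)$, use a global Lipschitz constant there, combine with \eqref{ieq:Retr_property2}, and then add the descent-lemma bound---matches the paper's proof almost line for line.
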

	
	\begin{proof}
		Item (i) can be obtained directly following a similar line of arguments to the analysis of \cite[Appendix B]{Boumal-Absil-Cartis:JNA:2019}. Here we omit the proof for brevity.
		
		We next prove item (ii). Let $\tilde{\Lambda} := \{ z\in\mathbb{R}^n:\mathrm{dist}(z,\Lambda)\leq\max\{1,W_1\}\delta \}$, where $W_1>0$ is defined in item (i). Clearly, for all $x\in\Lambda$ and $v\in\mathrm{T}_x\mathcal{M} \cap \overline{\mathcal{B}}(0_n,\delta)$, $x+v\in\tilde{\Lambda}$ and $\mathrm{Retr}_x(v)\in\tilde{\Lambda}$ thanks to \eqref{ieq:Retr_property1}. Also, the compactness of $\tilde{\Lambda}$ follows from that of $\Lambda$ is compact. In view of these facts and the local Lipschitz continuity of $\varphi$ on $\mathbb{R}^n$, we see that there exists a constant $C_1>0$, such that for all $x\in\Lambda$ and $v \in \mathrm{T}_x\mathcal{M} \cap \overline{\mathcal{B}}(0_n,\delta)$,
		\begin{equation} \label{proof:lemma:Retr_property:item2:eq1}
			| \varphi(\mathrm{Retr}_x(v))-\varphi(x+v) | \leq C_1 \|\mathrm{Retr}_x(v)-x-v\|.
		\end{equation}
		Invoking \eqref{proof:lemma:Retr_property:item2:eq1}, \eqref{ieq:Retr_property2} and letting $L_{\varphi}=C_1W_2$, we immediately show \eqref{ieq:Retr_property_item2_3}. Suppose in addition $\varphi$ is differentiable with a locally Lipschitz continuous gradient $\nabla \varphi$. Using again the compactness of $\tilde{\Lambda}$, we know that there exists a constant $L_{\nabla \varphi}>0$, such that for all $x\in\Lambda$ and $v \in \mathrm{T}_x\mathcal{M} \cap \overline{\mathcal{B}}(0_n,\delta)$,
		\begin{equation}	\label{proof:lemma:Retr_property:item2:eq2}
			| \varphi(x+v)-\varphi(x)-\langle \nabla \varphi(x),v \rangle | \leq \frac{L_{\nabla \varphi}}{2}\|v\|^2.
		\end{equation}
		
		Combining \eqref{ieq:Retr_property_item2_3} with \eqref{proof:lemma:Retr_property:item2:eq2}, we finally prove \eqref{ieq:Retr_property_item2_4}.
	\end{proof}

	\subsection{Generalized subdifferentials and directional derivative.}
	
	An extended-real-value function $\varphi:\mathbb{R}^n\to (-\infty,+\infty]$ is said to be proper if its domain $\mathrm{dom}(\varphi):=\{x\in\mathbb{R}^n:\varphi(x)<+\infty\}$ is nonempty. A proper function $\varphi$ is said to be closed if $\varphi$ is lower semicontinuous on $\mathbb{R}^n$. For a proper function $\varphi$, its Fr$\mathrm{\acute{e}}$chet and limiting subdiffferential at $x\in\mathrm{dom}(\varphi)$ are defined respectively by
	\begin{displaymath}
		\begin{aligned}
			\widehat{\partial} \varphi(x) &:= \left\{ y\in\mathbb{R}^n: \lim \limits_{z\to x \atop z \neq x} \inf \frac{\varphi(z)-\varphi(x)-\langle y,z-x  \rangle}{\|z-x\|} \geq 0 \right\}, \\
			\partial \varphi(x) &:= \left\{ y\in\mathbb{R}^n: \exists x^k\to x,~\varphi(x^k)\to\varphi(x),~y^k\in\widehat{\partial}\varphi(x^k)~ with~y^k\to y\right\}.
		\end{aligned}
	\end{displaymath}
	We define $\mathrm{dom}(\partial \varphi):=\{x\in\mathrm{dom}(\varphi):\partial\varphi(x)\neq \emptyset\}$. It is straightforward to verify that $\widehat{\partial}\varphi(x) \subseteq \partial\varphi(x)$, $\widehat{\partial}(\alpha \varphi)(x)=\alpha\widehat{\partial}\varphi(x)$, $\partial(\alpha \varphi)(x)=\alpha\partial\varphi(x)$ holds for any $x\in\mathrm{dom}(\varphi)$ and $\alpha >0$. When the function $\varphi$ is convex, both Fr$\mathrm{\acute{e}}$chet subdifferential and limiting subdifferential coincide with the classical subdifferential at any $x\in\mathrm{dom}(\varphi)$ (\cite[Proposition 8.12]{Rockafellar-Wets:VariationalAnalysis:2009}), i.e.,
	\begin{displaymath}
		\widehat{\partial}\varphi(x) = \partial\varphi(x)=\{ y\in\mathbb{R}^n:\varphi(z)-\varphi(x)-\langle y,z-x \rangle\geq 0,~\forall z\in\mathbb{R}^n \}.
	\end{displaymath}
	It is known that $\widehat{\partial}\varphi(x)=\{\nabla \varphi(x)\}$ if $\varphi$ is differentiable at $x$. We say $\varphi$ is continuously differentiable at $x$, if $\varphi$ is differentiable on some $\mathcal{B}(x)$ and $\nabla \varphi$ is continuous at $x$. It can be verify that $\partial\varphi(x)=\{\nabla\varphi(x)\}$ holds if $\varphi$ is continuously differentiable at $x$. 
	
	Recall that the directional derivative, if it exists, of a function $\varphi:\mathbb{R}^n\to\mathbb{R}$ at $x\in\mathbb{R}^n$ along a direction $\eta\in\mathbb{R}^n$ is defined by \cite[Definition 1.1.3]{Cui-Pang:SIAM:2021}
	\begin{displaymath}
		\varphi^\prime(x;\eta) = \lim \limits_{t \searrow 0} \frac{\varphi(x+t\eta)-\varphi(x)}{t}.
	\end{displaymath}
	If $\varphi$ is locally Lipschitz continuous at $x$, then the Clarke generalized derivative of $\varphi$ at $x$ in the direction $\eta$, denoted by $\varphi^{\circ}(x; \eta)$, is defined as follows \cite{Clarke-Ledyaev-Stern-Wolenski:1998}:
	\begin{displaymath}
		\varphi^\circ(x;\eta) := \limsup_{\substack{y \to x \\ t \searrow 0}} \frac{\varphi(y + t\eta) - \varphi(y)}{t}.
	\end{displaymath}
	If $\varphi^\prime(x;\eta)$ exists for all $\eta\in\mathbb{R}^n$, then $\varphi$ is said to be directionally differentiable at $x$. When $\varphi$ is locally Lipschitz continuous around $\overline{x}\in \mathbb{R}^n$ and directionally differentiable at $\overline{x}$, then the statement $\widehat{\partial}\varphi(\overline{x})=\partial\varphi(\overline{x})$ (termed lower regular) is equivalent to the statement $\varphi^\prime(\overline{x};\eta)=\varphi^\circ(\overline{x};\eta)$ for all $\eta \in \mathbb{R}^n$ (termed C(larke)-regular), and is equivalent to the statement \cite[Proposition 4.3.5]{Cui-Pang:SIAM:2021}
	\begin{displaymath}
		\varphi^\prime(\overline{x},\eta) = \sup \limits_{y\in\partial\varphi(\overline{x})} \left\{ \langle \eta,y \rangle \right\},~\forall \eta\in\mathbb{R}^n.
	\end{displaymath}
	
	If $S \subseteq \mathbb{R}^n$ is convex and $\varphi$ is directionally differentiable on $\mathbb{R}^n$, then a necessary condition for a vector $\overline{x}\in S$ to be a local minimizer of $\varphi$ on $S$ is that \cite[Section 2.3]{Cui-Pang:SIAM:2021}
	\begin{equation} \label{ieq:B-stationaryCondition}
		\varphi^\prime(\overline{x};x-\overline{x}) \geq 0 \quad \forall x\in S.
	\end{equation}
	A vector $\overline{x}\in S$ satisfying \eqref{ieq:B-stationaryCondition} is called a directional stationary point of problem $\min\limits_{x\in S} \varphi(x)$ \cite[Section 2.3]{Cui-Pang:SIAM:2021}.
	
	We also need to review some properties of a weakly convex function. Recall that a function $\varphi$ is said to be weakly convex with modulus $\tau>0$ on $\mathbb{R}^n$, if $\varphi+\frac{\tau}{2}\|\cdot\|^2$ is convex on $\mathbb{R}^n$. Thus, it can be easily verified that a weakly convex function $\varphi:\mathbb{R}^n\to\mathbb{R}$ is locally Lipschitz continuous \cite[Corollary 8.40, Theorem 8.38]{Bauschke-Combettes:HilbertSpaces:2017}, and $\bigcup\limits_{x\in S} \partial \varphi(x)$ is nonempty and bounded on any compact set $S \subseteq \mathbb{R}^n$ \cite[Proposition 5.4.2]{Bertsekas:Convex:2009}. For a proper closed convex function $\varphi:\mathbb{R}^n\to (-\infty,+\infty]$, the conjugate $\varphi^{*}$ is also a proper closed convex function and $(\varphi^{*})^{*}=\varphi$ (\cite[Theorem 11.1]{Rockafellar-Wets:VariationalAnalysis:2009}). Moreover, it is known that the following equivalence holds (\cite[Proposition 11.3]{Rockafellar-Wets:VariationalAnalysis:2009}):
	\begin{displaymath}
		\langle x,y \rangle = \varphi(x) + \varphi^{*}(y) \iff y\in\partial \varphi(x) \iff x\in\partial \varphi^{*}(y).
	\end{displaymath}
	
	To end this subsection, we introduce the notion of Clarke subdifferential of a locally Lipschitz function on a Riemannian submanifold of $\mathbb{R}^n$. Let $\mathcal{M}$ be a Riemannian submanifold of $\mathbb{R}^n$, $\varphi:\ \mathcal{M} \rightarrow \mathbb{R}$ be a locally Lipschitz function on $\mathcal{M}$ and $(U, \phi)$ be a chart around $x \in \mathcal{M}$. The Clarke generalized derivative of $\varphi$ at $x$ in the direction $v \in \mathrm{T}_x\mathcal{M}$, denoted by $\varphi^\circ(x;v)$, is defined by \cite{Borckmans-Selvan-Boumal-Absil:JCAM:2014,Liu-Xiao-Yuan:JSC:2024,Si-Absil-Huang-Jiang-Vary:SIOPT:2024}:
	\begin{displaymath}
		\varphi^\circ(x; v) := \limsup_{\substack{y \to x \\ t \searrow 0}} \frac{\hat{\varphi}(\phi(y) + t \mathrm{D}\phi(x)(v)) - \hat{\varphi}(\phi(y))}{t}.
	\end{displaymath}
	where $\hat{\varphi} = \varphi \circ \phi^{-1}$. The Riemannian Clarke subdifferential of $\varphi$ at $x \in \mathcal{M}$, denoted by $\partial_{\scriptscriptstyle C}\varphi (x)$, is defined as
	\begin{displaymath}
		\partial_{\scriptscriptstyle C}\varphi (x): = \{\xi \in \mathrm{T}_x\mathcal{M}:\langle \xi,v\rangle \leq\varphi^{\circ}(x;v),\  \forall v \in \mathrm{T}_x\mathcal{M}\}.
	\end{displaymath}
	It is easy to check by utilizing Proposition 3.1 in \cite{Yang-Zhang-Song:PAC_J_OPTIM:2014} and Proposition 2.2 in \cite{Clarke-Ledyaev-Stern-Wolenski:1998} that $\partial_{\scriptscriptstyle C}(\varphi_1+\varphi_2)(x) \subseteq \partial_{\scriptscriptstyle C}\varphi_1(x)+\partial_{\scriptscriptstyle C}\varphi_2(x)$ and $\partial_{\scriptscriptstyle C}(\varphi_1-\varphi_2)(x) \subseteq \partial_{\scriptscriptstyle C}\varphi_1(x)-\partial_{\scriptscriptstyle C}\varphi_2(x)$. According to \cite[Theorem 4.4]{Yang-Zhang-Song:PAC_J_OPTIM:2014}, a first-order necessary condition for a local minimizer $x^*$ of $\varphi$ on $\mathcal{M}$ is that $0 \in \partial_{\scriptscriptstyle C}\varphi(x^*)$. Recall that a function $\varphi:\mathbb{R}^n \rightarrow \mathbb{R}$ is said to be C(larke)-regular at $x \in \mathcal{M}$ along $\mathrm{T}_x\mathcal{M}$, if $\varphi$ is locally Lipschitz continuous around $x$, $\varphi^\prime(x;v)$ exists and $\varphi^\prime(x;v)= \varphi^{\circ}(x;v)$ for all $v \in \mathrm{T}_x\mathcal{M}$ \cite{clarke1990optimization}. In view of Theorem 5.1 in \cite{Yang-Zhang-Song:PAC_J_OPTIM:2014}, we have $\partial_{\scriptscriptstyle C}\varphi|_{\scriptscriptstyle\mathcal{M}}(x)=  \text{Proj}_{\scriptscriptstyle \mathrm{T}_x\mathcal{M}}\partial_{\scriptscriptstyle C}\varphi(x)$ for $x \in \mathcal{M}$ if $\varphi$ is C-regular at $x$ along $\mathrm{T}_x\mathcal{M}$, where $\mathrm{Proj}_{\mathrm{T}_x\mathcal{M}}$ denotes the orthogonal projection onto $\mathrm{T}_x\mathcal{M}$. Furthermore, if $\varphi$ is differentiable at $x \in \mathcal{M}$, then 
	\begin{displaymath}
		\text{grad}\varphi|_{\scriptscriptstyle \mathcal{M}}(x)=\text{Proj}_{\scriptscriptstyle \mathrm{T}_x\mathcal{M}}\nabla\varphi(x),
	\end{displaymath}
	where $\text{grad}\varphi|_{\scriptscriptstyle \mathcal{M}}$ is the Riemannian gradient of $\varphi|_{\scriptscriptstyle\mathcal{M}}$.

	\subsection{KL property.}
	We now recall KL property, which has been used extensively in the convergence analysis of various first-order methods.
	\begin{definition} \label{defenition:KL_property}(KL property and exponent \cite{Attouch-Bolte-Redont-Soubeyran:MOR:2010}) A proper function $\varphi:\mathbb{R}^n\to (-\infty,+\infty]$ is said to satisfy the KL property at $x\in\mathrm{dom}(\partial \varphi)$ if there exist $\epsilon\in (0,+\infty]$, $\delta>0$, and a continuous concave function $\phi: [0,\epsilon)\to \mathbb{R}_{+}:= [0,+\infty)$, such that:			
		\begin{enumerate}[label = {\upshape(\roman*)}]
			\item $\phi(0)=0$;
			\item $\phi$ is continuously differentiable on $(0,\epsilon)$ with $\phi^\prime>0$;
			\item For any $z\in \mathcal{B}(x,\delta)\cap \{z\in\mathbb{R}^n:\varphi(x)<\varphi(z)<\varphi(x)+\varepsilon\}$, there holds $\phi^\prime\left(\varphi(z)-\varphi(x)\right) \mathrm{dist}(0,\partial\varphi(z))>1$. 
		\end{enumerate}
		If $\varphi$ satisfies the KL property at $x\in\mathrm{dom}(\partial\varphi)$ and $\phi$ can be chosen as $\phi(v) = a_0v^{1-\theta}$ for some $a_0>0$ and $\theta\in[0,1)$, then we say that $\varphi$ satisfies the KL property at $x$ with exponent $\theta$. 
	\end{definition}
	
	A proper function $\varphi:\mathbb{R}^n\to (-\infty,+\infty]$ is called a KL function if it satisfies the KL property at any point in $\mathrm{dom}(\partial \varphi)$. For connections between the KL property and the well-known error bound theory \cite{Pang:MP:1997}, we refer the interested readers to \cite{Bolte-Nguyen-Peypouquet-Suter:MP:2017}. It is well-known that semialgebraic functions provide a very rich subclass of KL functions. A extended-valued function is called semialgebraic if its graph is a semialgebraic set, which can be described as unions or intersections of finitely many lower sets of polynomials \cite[Definition 2.1]{Attouch:MP:2013}. A proper lower semicontinuous semialgebraic function is a KL function with an exponent $\theta \in[0,1)$. The semialgebraic function covers many commonly used functions in optimization problems that arise from applications, for example, maximum or minimum of finitely many polynomials, Euclidean norms and eigenvalues. Also, sums, products, quotients of semialgebraic functions and indicator functions on semialgebraic sets are still semialgebraic functions. The following lemma reveals a useful result that a KL function, if being constant on a compact set, has a uniformized expression on the KL property.
	
	\begin{lemma}{$($Uniformized KL property \cite[Lemma 6]{Bolte-Sabach-Teboulle:MP:2014}$)$} \label{lemma:Uniformized_KL_property}
		Let $\Upsilon \subseteq \mathbb{R}^n$ be a compact set, and the proper function $\varphi:\mathbb{R}^n\to (-\infty,+\infty]$ be constant on $\Upsilon$. If $\varphi$ satisfies the KL property at each point of $\Upsilon$, then there exist $\varepsilon$, $\delta>0$ and a continuous concave function $\phi:[0,\epsilon)\to [0,+\infty)$ satisfying Definition~\ref{defenition:KL_property} (i) and (ii) such that
		\begin{displaymath}
			\phi^\prime\left(\varphi(z)-\varphi(x)\right)\mathrm{dist}(0_n,\partial\varphi(z)) \geq 1
		\end{displaymath}
		holds for any $x\in\Upsilon$ and $z\in \mathcal{B}(x,\delta)\cap \{z\in\mathbb{R}^n:\varphi(x)<\varphi(z)<\varphi(x)+\varepsilon\}$.
	\end{lemma}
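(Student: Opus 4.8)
The plan is to derive the uniform statement from the pointwise KL property by a finite-covering argument on the compact set $\Upsilon$, the only real twist being the construction of a \emph{single} desingularizing function that serves all points of $\Upsilon$ at once. Write $\omega$ for the constant value of $\varphi$ on $\Upsilon$ (we may assume $\Upsilon\neq\emptyset$, otherwise there is nothing to prove). For every $u\in\Upsilon$, Definition~\ref{defenition:KL_property} supplies $\varepsilon_u\in(0,+\infty]$, $\delta_u>0$ and a desingularizing function $\phi_u:[0,\varepsilon_u)\to\mathbb{R}_{+}$ with the properties listed there. Since $\{\mathcal{B}(u,\delta_u/2)\}_{u\in\Upsilon}$ is an open cover of $\Upsilon$, compactness yields finitely many points $u_1,\dots,u_p\in\Upsilon$ with $\Upsilon\subseteq\bigcup_{i=1}^p\mathcal{B}(u_i,\delta_{u_i}/2)$.

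Next I would set $\varepsilon:=\min_{1\le i\le p}\varepsilon_{u_i}$, $\delta:=\tfrac12\min_{1\le i\le p}\delta_{u_i}$, and, crucially, $\phi:=\sum_{i=1}^p\phi_{u_i}$ restricted to $[0,\varepsilon)$ (this restriction is legitimate since $\varepsilon\le\varepsilon_{u_i}$ for each $i$). A finite sum of continuous concave functions is continuous and concave, $\phi(0)=\sum_i\phi_{u_i}(0)=0$, and on $(0,\varepsilon)$ one has $\phi'=\sum_{i=1}^p\phi_{u_i}'>0$ with $\phi'$ continuous, so $\phi$ meets items (i) and (ii) of Definition~\ref{defenition:KL_property}. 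Moreover, positivity of each summand gives the key domination $\phi'(s)\ge\phi_{u_i}'(s)$ for all $s\in(0,\varepsilon)$ and all $i$. Here I note that the pointwise maximum of the $\phi_u$'s would dominate the individual functions similarly but need not be concave, which is precisely why the sum is the right object.

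It then remains to verify the inequality for an arbitrary $x\in\Upsilon$ and $z\in\mathcal{B}(x,\delta)\cap\{z:\varphi(x)<\varphi(z)<\varphi(x)+\varepsilon\}$. Choosing $i$ with $\|x-u_i\|<\delta_{u_i}/2$, the triangle inequality gives $\|z-u_i\|\le\|z-x\|+\|x-u_i\|<\delta+\delta_{u_i}/2\le\delta_{u_i}$, so $z\in\mathcal{B}(u_i,\delta_{u_i})$. At this point constancy of $\varphi$ on $\Upsilon$ is essential: it forces $\varphi(x)=\varphi(u_i)=\omega$, so the level-set window of $z$ relative to $x$ coincides with the one relative to $u_i$, and $\varepsilon\le\varepsilon_{u_i}$ then places $z$ in the region where the KL inequality at $u_i$ holds. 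Invoking that inequality and the domination of derivatives yields $\phi'(\varphi(z)-\varphi(x))\,\mathrm{dist}(0_n,\partial\varphi(z))\ge\phi_{u_i}'(\varphi(z)-\varphi(u_i))\,\mathrm{dist}(0_n,\partial\varphi(z))>1$, which is even stronger than the asserted bound.

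The only genuinely delicate point is the simultaneous desingularization just described: one must reconcile concavity of $\phi$ with the pointwise requirement $\phi'\ge\phi_{u_i}'$, and the summation trick is what achieves this. Everything else — the finite subcover, the halved radii that let the triangle inequality absorb $z$ into the correct ball, and the level-set bookkeeping — is routine once one observes that constancy of $\varphi$ on $\Upsilon$ aligns all the reference levels $\varphi(u_i)$ to the single value $\omega$.
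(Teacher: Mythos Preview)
Your proof is correct and follows exactly the standard argument of Bolte--Sabach--Teboulle \cite[Lemma 6]{Bolte-Sabach-Teboulle:MP:2014}, which is precisely the reference the paper cites; the paper itself does not supply a proof of this lemma but merely quotes it as a known result. The compactness/finite-subcover step, the halved radii for the triangle inequality, and the summation trick $\phi:=\sum_i\phi_{u_i}$ to reconcile concavity with the domination $\phi'\ge\phi_{u_i}'$ are all exactly as in the original source, so there is nothing to compare.
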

	
\section{Stationary points of problem \eqref{problemn1}.}
	In this section, we introduce four different notions of stationary point for problem \eqref{problemn1} and investigate 	their relationships.

	\subsection{B-stationary and Riemannian C-stationary points of a function on Riemannian submanifolds.}
	In this subsection, we investigate B-stationarity and Riemannian C-stationarity for a locally Lipschitz continuous function on Riemannian submanifolds of $\mathbb{R}^n$. Recall that the manifold $\mathcal{M}$ in this paper is assumed to be a Riemannian submanifold of $\mathbb{R}^n$. We begin with the definition of B(ouligand)-stationarity and Riemannian C(larke)-stationarity for a locally Lipschitz continuous function over $\mathcal{M}$.
	\begin{definition}
		Let $\varphi$ be a locally Lipschitz continuous and real-valued function defined on an open set containing the manifold $\mathcal{M}$. 
		\begin{enumerate}[label = {\upshape(\roman*)}]
			\item \cite[Definition 6.1.1]{Cui-Pang:SIAM:2021} We say $\overline{x}\in\mathcal{M}$ is a B(ouligand)-stationary point of $\varphi$ on $\mathcal{M}$ if for all $v\in\mathrm{T}_{\overline{x}}\mathcal{M}$, $\varphi^\prime(\overline{x};v)$ exists and satisfies $\varphi^\prime(\overline{x};v)\geq0$.
			\item \cite{Yang-Zhang-Song:PAC_J_OPTIM:2014} We say $\overline{x}\in\mathcal{M}$ is a Riemannian C(larke)-stationary point of $\varphi$ on $\mathcal{M}$ if $0_n\in\partial_C \varphi\vert_\mathcal{M}(\overline{x})$.
		\end{enumerate}
	\end{definition}
	
	Next we review the relationships between a local minimizer of $\varphi$ on $\mathcal{M}$ and a B-stationary point as well as a Riemannian C-stationary point of $\varphi$ on $\mathcal{M}$. Item (\romannumeral1) in the following proposition is from the statement below \cite[Definition 6.1.1]{Cui-Pang:SIAM:2021}, while Item (\romannumeral2) is a direct result of \cite[Proposition 3.3]{Yang-Zhang-Song:PAC_J_OPTIM:2014} and \cite[(3.8)]{Yang-Zhang-Song:PAC_J_OPTIM:2014}.
	
	\begin{proposition}  \label{Prop:B-stationaryPoint}
		Let $\mathcal{O} \supseteq \mathcal{M}$ be an open set of $\mathbb{R}^n$ and $\varphi:\mathcal{O}\to \mathbb{R}$ be locally Lipschitz continuous. Suppose $\overline{x}\in\mathcal{M}$ is a local minimizer of $\varphi$ on $\mathcal{M}$.
		\begin{enumerate}[label = {\upshape(\roman*)}]
			\item If $\varphi^\prime(\overline{x};v)$ exists for all $v \in \mathrm{T}_{\overline{x}} \mathcal{M}$, then $\overline{x}$ is a B-stationary point of $\varphi$ on $\mathcal{M}$.
			\item $\overline{x}$ is a Riemannian C-stationary point of $\varphi$ on $\mathcal{M}$.
		\end{enumerate}
		
	\end{proposition}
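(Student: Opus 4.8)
The plan is to treat the two items separately: item (\romannumeral1) is a short retraction (or smooth-curve) argument that converts the manifold-minimality of $\overline{x}$ into the sign condition on $\varphi^\prime(\overline{x};\cdot)$, essentially the proof of the remark below \cite[Definition 6.1.1]{Cui-Pang:SIAM:2021}; item (\romannumeral2) is an invocation of the first-order necessary optimality condition for locally Lipschitz functions on Riemannian submanifolds recalled just above Proposition~\ref{Prop:B-stationaryPoint}, i.e. \cite[Proposition 3.3]{Yang-Zhang-Song:PAC_J_OPTIM:2014} together with \cite[(3.8)]{Yang-Zhang-Song:PAC_J_OPTIM:2014}.

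For item (\romannumeral1), fix an arbitrary $v\in\mathrm{T}_{\overline{x}}\mathcal{M}$. By hypothesis $\varphi^\prime(\overline{x};v)$ exists, so it suffices to show this limit is nonnegative; the case $v=0_n$ being trivial, assume $v\neq0_n$. I would take the retraction curve $c(t):=\mathrm{Retr}_{\overline{x}}(tv)$, which lies in $\mathcal{M}$, satisfies $c(0)=\overline{x}$, and by \eqref{eq:Retr_definition_item2} obeys $\|c(t)-(\overline{x}+tv)\|=o(t)$ as $t\searrow0$; alternatively one may use any smooth curve in $\mathcal{M}$ through $\overline{x}$ with velocity $v$. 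Since $\overline{x}$ is a local minimizer of $\varphi$ on $\mathcal{M}$ and $c$ is continuous, $\varphi(c(t))\geq\varphi(\overline{x})$ for all sufficiently small $t>0$, while the local Lipschitz continuity of $\varphi$ near $\overline{x}$ with some modulus $L$ (valid on a neighborhood eventually containing both $c(t)$ and $\overline{x}+tv$) gives
\[
\varphi(\overline{x}+tv)-\varphi(\overline{x})\;\geq\;\bigl(\varphi(c(t))-\varphi(\overline{x})\bigr)-L\,\|c(t)-(\overline{x}+tv)\|\;\geq\;-L\,o(t).
\]
Dividing by $t>0$ and letting $t\searrow0$, the left-hand side tends to $\varphi^\prime(\overline{x};v)$ and the right-hand side to $0$, hence $\varphi^\prime(\overline{x};v)\geq0$. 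Since $v$ was arbitrary, $\overline{x}$ is a B-stationary point of $\varphi$ on $\mathcal{M}$.

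For item (\romannumeral2), note first that since $\varphi$ is locally Lipschitz on the open set $\mathcal{O}\supseteq\mathcal{M}$, its restriction $\varphi|_{\scriptscriptstyle\mathcal{M}}$ is locally Lipschitz on the manifold, so $\partial_{\scriptscriptstyle C}\varphi|_{\scriptscriptstyle\mathcal{M}}(\overline{x})$ is well defined in the sense of Section~2. Because $\overline{x}$ is a local minimizer of $\varphi|_{\scriptscriptstyle\mathcal{M}}$, the first-order necessary condition for minimizing a locally Lipschitz function over a Riemannian manifold — \cite[Proposition 3.3]{Yang-Zhang-Song:PAC_J_OPTIM:2014} combined with \cite[(3.8)]{Yang-Zhang-Song:PAC_J_OPTIM:2014}, equivalently \cite[Theorem 4.1]{Yang-Zhang-Song:PAC_J_OPTIM:2014} as recalled above — yields $0_n\in\partial_{\scriptscriptstyle C}\varphi|_{\scriptscriptstyle\mathcal{M}}(\overline{x})$, i.e. $\overline{x}$ is a Riemannian C-stationary point of $\varphi$ on $\mathcal{M}$.

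I do not expect a genuine obstacle. The only point in item (\romannumeral1) requiring care is the bookkeeping: choosing $t$ small enough that $c(t)$ and $\overline{x}+tv$ lie in one neighborhood on which $\varphi$ is Lipschitz and on which $\overline{x}$ attains its minimum over $\mathcal{M}$, and using that the retraction remainder is truly $o(t)$ (not merely $O(t)$), which is exactly what \eqref{eq:Retr_definition_item2} provides. In item (\romannumeral2) the subtlety is purely notational — reading $\partial_{\scriptscriptstyle C}\varphi|_{\scriptscriptstyle\mathcal{M}}$ as the Riemannian Clarke subdifferential of the restricted function (defined through a chart), so the cited manifold optimality condition applies verbatim without appealing to the regularity-dependent identity $\partial_{\scriptscriptstyle C}\varphi|_{\scriptscriptstyle\mathcal{M}}(\overline{x})=\mathrm{Proj}_{\mathrm{T}_{\overline{x}}\mathcal{M}}\,\partial_{\scriptscriptstyle C}\varphi(\overline{x})$.
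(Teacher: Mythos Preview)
Your proposal is correct and aligns with the paper's own treatment: the paper does not give a self-contained proof but simply states that item (\romannumeral1) follows from the discussion below \cite[Definition 6.1.1]{Cui-Pang:SIAM:2021} and item (\romannumeral2) from \cite[Proposition 3.3 and (3.8)]{Yang-Zhang-Song:PAC_J_OPTIM:2014}, exactly the references you invoke. Your retraction-curve argument for (\romannumeral1) is a clean explicit unpacking of what lies behind the Cui--Pang remark, and your handling of (\romannumeral2) is identical to the paper's.
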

	
	In Euclidean space, we know that for a C-regular function the B-stationarity and C-stationarity are equivalent \cite[Proposition 4.3.5]{Cui-Pang:SIAM:2021}. If the function is not C-regular, B-stationarity implies C-stationarity when the underlying function is locally Lipschitz continuous and directional differentiable. However, the converse implication of C-stationarity implying B-stationarity is not always valid \cite{Pang-Razaviyayn-Alvarado:MATH_OPER_RES:2017}. Similar results hold for nonsmooth optimization on manifolds, as presented in the following proposition.
	\begin{proposition}
		Let $\mathcal{O} \supseteq \mathcal{M}$ be an open set of $\mathbb{R}^n$ and $\varphi:\mathcal{O}\to\mathbb{R}$ be locally Lipschitz continuous and directional differentiable at each $x\in \mathcal{M}$.
		\begin{enumerate}[label = {\upshape(\roman*)}]
			\item If $\overline{x}\in\mathcal{M}$ is a B-stationary point of $\varphi$ on $\mathcal{M}$ then $\overline{x}$ is a Riemannian C-stationary point of $\varphi$ on $\mathcal{M}$.
			\item If $\varphi$ is C-regular at $\overline{x}\in \mathcal{M}$ along $\mathrm{T}_{\overline{x}} \mathcal{M}$, then the statements that $\overline{x}$ is a B-stationary point and Riemannian C-stationary point of $\varphi$ on $\mathcal{M}$ are equivalent.
		\end{enumerate}
	\end{proposition}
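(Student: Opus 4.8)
The plan is to run everything through a comparison, at a fixed $\overline{x}\in\mathcal{M}$ and along tangent vectors $v\in\mathrm{T}_{\overline{x}}\mathcal{M}$, of three directional objects: the Euclidean directional derivative $\varphi'(\overline{x};v)$; the Euclidean Clarke generalized derivative $\varphi^{\circ}(\overline{x};v)$ (with $\varphi$ viewed on $\mathbb{R}^n$); and the chart-based Clarke generalized derivative of $\varphi|_{\mathcal{M}}$ that enters the definition of $\partial_{\scriptscriptstyle C}\varphi|_{\mathcal{M}}$. The one technical ingredient I would isolate first is that, for $v\in\mathrm{T}_{\overline{x}}\mathcal{M}$, the chart-based directional derivative $(\varphi|_{\mathcal{M}})'(\overline{x};v)$ equals the Euclidean $\varphi'(\overline{x};v)$, and that the chart-based Clarke derivative dominates it. Both follow from local Lipschitz continuity: the curve $t\mapsto \phi^{-1}(\phi(\overline{x})+t\,\mathrm{D}\phi(\overline{x})(v))$ (equivalently $t\mapsto \mathrm{Retr}_{\overline{x}}(tv)$, using \eqref{eq:Retr_definition_item2}) agrees with $\overline{x}+tv$ up to $o(t)$, so composing with $\varphi$ perturbs difference quotients only by $o(t)/t\to 0$; and the $\limsup$ defining the chart-based $\varphi^{\circ}$ dominates the limit obtained by freezing the base point at $\overline{x}$. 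Hence, for every $v\in\mathrm{T}_{\overline{x}}\mathcal{M}$, the chart-based quantity $\varphi^{\circ}(\overline{x};v)$ appearing in $\partial_{\scriptscriptstyle C}\varphi|_{\mathcal{M}}(\overline{x})$ satisfies $\varphi^{\circ}(\overline{x};v)\ge \varphi'(\overline{x};v)$.

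For item (i): if $\overline{x}$ is B-stationary then $\varphi'(\overline{x};v)\ge 0$ for every $v\in\mathrm{T}_{\overline{x}}\mathcal{M}$, so the inequality above gives $\langle 0_n,v\rangle = 0\le \varphi'(\overline{x};v)\le \varphi^{\circ}(\overline{x};v)$ for all such $v$. By the definition of the Riemannian Clarke subdifferential this says precisely $0_n\in\partial_{\scriptscriptstyle C}\varphi|_{\mathcal{M}}(\overline{x})$, i.e. $\overline{x}$ is a Riemannian C-stationary point. This direction uses neither C-regularity nor the projection formula.

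For item (ii): the implication ``B-stationary $\Rightarrow$ Riemannian C-stationary'' is exactly item (i), so only the converse needs the C-regularity hypothesis. Here I would invoke the projection formula $\partial_{\scriptscriptstyle C}\varphi|_{\mathcal{M}}(\overline{x})=\mathrm{Proj}_{\mathrm{T}_{\overline{x}}\mathcal{M}}\partial_{\scriptscriptstyle C}\varphi(\overline{x})$, valid under C-regularity of $\varphi$ at $\overline{x}$ along $\mathrm{T}_{\overline{x}}\mathcal{M}$ (recalled in Section~2 from \cite{Yang-Zhang-Song:PAC_J_OPTIM:2014}). If $0_n\in\partial_{\scriptscriptstyle C}\varphi|_{\mathcal{M}}(\overline{x})$, then there is $\xi\in\partial_{\scriptscriptstyle C}\varphi(\overline{x})$ with $\mathrm{Proj}_{\mathrm{T}_{\overline{x}}\mathcal{M}}\xi=0_n$, i.e. $\xi\in\mathrm{N}_{\overline{x}}\mathcal{M}$. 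For any $v\in\mathrm{T}_{\overline{x}}\mathcal{M}$, orthogonality of $\mathrm{T}_{\overline{x}}\mathcal{M}$ and $\mathrm{N}_{\overline{x}}\mathcal{M}$ gives $\langle\xi,v\rangle=0$, while $\xi\in\partial_{\scriptscriptstyle C}\varphi(\overline{x})$ gives $\langle\xi,v\rangle\le\varphi^{\circ}(\overline{x};v)$ (Euclidean Clarke derivative), and C-regularity gives $\varphi^{\circ}(\overline{x};v)=\varphi'(\overline{x};v)$. Chaining these, $0=\langle\xi,v\rangle\le\varphi'(\overline{x};v)$ for every $v\in\mathrm{T}_{\overline{x}}\mathcal{M}$, which is B-stationarity. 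An equivalent packaging, which I might prefer to present, is that under C-regularity $\varphi'(\overline{x};v)=\max_{\xi\in\partial_{\scriptscriptstyle C}\varphi|_{\mathcal{M}}(\overline{x})}\langle\xi,v\rangle$ on $\mathrm{T}_{\overline{x}}\mathcal{M}$ (combining the projection formula, the support-function characterization of the Euclidean Clarke subdifferential, and $\mathrm{Proj}_{\mathrm{T}_{\overline{x}}\mathcal{M}}v=v$), after which both stationarities are equivalent to $\sup_{\xi\in\partial_{\scriptscriptstyle C}\varphi|_{\mathcal{M}}(\overline{x})}\langle\xi,v\rangle\ge 0$ for all $v$, i.e. to $0_n$ lying in the nonempty compact convex set $\partial_{\scriptscriptstyle C}\varphi|_{\mathcal{M}}(\overline{x})\subseteq\mathrm{T}_{\overline{x}}\mathcal{M}$ by a separation argument.

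The main obstacle I anticipate is purely bookkeeping with the two distinct Clarke generalized derivatives — the Euclidean one on $\mathbb{R}^n$ and the chart-based one inside $\partial_{\scriptscriptstyle C}\varphi|_{\mathcal{M}}$ — and verifying that the chart-based directional quantity, restricted to tangent directions, is sandwiched between $\varphi'(\overline{x};\cdot)$ and the Euclidean $\varphi^{\circ}(\overline{x};\cdot)$; once the retraction estimate \eqref{eq:Retr_definition_item2} and local Lipschitz continuity secure that sandwich, and once the projection formula is cited for item (ii), the remaining steps are short chains of (in)equalities.
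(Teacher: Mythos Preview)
Your proposal is correct and follows essentially the same approach as the paper. For item (i), both you and the paper establish the key inequality $\overline{\varphi}^{\circ}(\overline{x};v)\ge \varphi'(\overline{x};v)$ on $\mathrm{T}_{\overline{x}}\mathcal{M}$ by freezing the base point in the chart-based $\limsup$ and using that the chart curve agrees with $\overline{x}+tv$ to first order (the paper phrases this via the expansion $\phi^{-1}(\phi(y)+t\mathrm{D}\phi(\overline{x})(v))=y+t\psi(t,y)$ from \cite{Yang-Zhang-Song:PAC_J_OPTIM:2014} together with semidifferentiability of $\varphi$, while you invoke local Lipschitz continuity directly; these are equivalent here). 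For item (ii), both arguments invoke the projection formula $\partial_{\scriptscriptstyle C}\varphi|_{\mathcal{M}}(\overline{x})=\mathrm{Proj}_{\mathrm{T}_{\overline{x}}\mathcal{M}}\partial_{\scriptscriptstyle C}\varphi(\overline{x})$ under C-regularity and then pass through $\varphi^{\circ}(\overline{x};v)=\varphi'(\overline{x};v)$ on tangent directions; the paper packages this as the equivalence $0_n\in\partial_{\scriptscriptstyle C}\varphi(\overline{x})+\mathrm{N}_{\overline{x}}\mathcal{M}\Leftrightarrow \varphi^{\circ}(\overline{x};v)\ge 0$ for all $v\in\mathrm{T}_{\overline{x}}\mathcal{M}$, whereas you pick a single $\xi\in\partial_{\scriptscriptstyle C}\varphi(\overline{x})\cap\mathrm{N}_{\overline{x}}\mathcal{M}$ --- a cosmetic difference only.
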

	\begin{proof}
		We first prove Item (\romannumeral1). Denote by $\overline{\varphi}$ the restriction of $\varphi$ on $\mathcal{M}$, i.e., $\overline{\varphi}:= \varphi\vert_{\mathcal{M}}$. It suffices to show $\overline{\varphi}^\circ(\overline{x};v) \geq \varphi^\prime (\overline{x};v)$ for all $v\in \mathrm{T}_{\overline{x}} \mathcal{M}$, since the statements that ${\rm 0}\in\partial_C \overline{\varphi}(\overline{x})$ and $\overline{\varphi}^\circ (\overline{x};v)\geq 0$ for all $v\in \mathrm{T}_{\overline{x}} \mathcal{M}$ are equivalent due to the definition of $\partial_C \overline{\varphi}$. Let $(U,\phi)$ be a chart of $\mathcal{M}$ around $\overline{x}$. Then, there exist $t_1>0$ and $\delta>0$ such that for any $t\in\left[0,t_1\right]$, $y\in \mathcal{M}$ with $\|y-\overline{x}\|<\delta$ there holds \cite[Theorem 5.1]{Yang-Zhang-Song:PAC_J_OPTIM:2014} $$\phi^{-1}(\phi(y)+t\mathrm{D} \phi(x)(v))=y+t\psi(t,y),$$
		where $\psi(t,y)=\mathrm{D} \phi^{-1}(\phi(y))(\mathrm{D} \phi(\overline{x})(v)) + t\tilde{\phi}(y)$ with $\tilde{\phi}$ being smooth of $y$. Since $\mathrm{D} \phi( \overline{x})$ is invertible, one has $\lim \limits_{t \searrow 0} \psi(t,\overline{x})=v$ by invoking \cite[8(19)]{Rockafellar-Wets:VariationalAnalysis:2009}. According to \cite[Proposition 4.1.3]{Cui-Pang:SIAM:2021}, $\varphi$ is semidifferentiable at $\overline{x}$, i.e., $\lim_{\substack{w\rightarrow v\\ t \searrow 0}}\frac{\varphi(\overline{x}+ tw)- \varphi(\overline{x})}{t}$ exists and equals to $\varphi^\prime(\overline{x};v)$. These together with the definition of $\overline{\varphi}^\circ(\overline{x}; v)$ yield that
		\begin{align}
			\overline{\varphi}^\circ(\overline{x};v)& = \limsup_{\substack{t \searrow 0\\y \to \overline{x}}} \frac{\varphi(y+t\psi(t,y))-\varphi(y)}{t}\notag \\
			&\geq \limsup_{t\searrow0} \frac{\varphi(\overline{x} +t\psi(t,\overline{x}))-\varphi(\overline{x})}{t} \notag\\
			&= \lim_{\substack{t\searrow 0 \\ v^\prime\rightarrow v} }\frac{\varphi(\overline{x}+t v^\prime)-\varphi(\overline{x})}{t}\notag\\
			&=\varphi^\prime(\overline{x};v).\notag
		\end{align}
		Then, item (\romannumeral1) follows immediately.
		\par We next prove Item (\romannumeral2). Since $\varphi$ is C-regular at $\overline{x}\in\mathcal{M}$ along $\mathrm{T}_{\overline{x}} \mathcal{M}$, we have $\partial_C \varphi\vert_{\mathcal{M}} (\overline{x})={\rm Proj}_{\mathrm{T}_{\overline{x}} \mathcal{M}} \partial_C \varphi(\overline{x})$ \cite[Theorem 5.1]{Yang-Zhang-Song:PAC_J_OPTIM:2014}. Thus, the statements that ${\rm 0_n}\in\partial_C \varphi \vert_{\mathcal{M}}(\overline{x})$ and ${\rm 0_n} \in \partial_C \varphi(\overline{x})+\mathrm{N}_{\overline{x}} \mathcal{M}$ are equivalent. Invoking \cite[(4.22)]{Cui-Pang:SIAM:2021}, the latter one is equivalent to $\varphi^\circ(\overline{x};v) \geq 0$ for all $v\in \mathrm{T}_{\overline{x}} \mathcal{M}$. In view of the C-regularity of $\varphi$ at $\overline{x}$ along $\mathrm{T}_{\overline{x}} \mathcal{M}$, we obtain that $\varphi^\circ(\overline{x};v)=\varphi^\prime(\overline{x};v)$ for all $v\in \mathrm{T}_{\overline{x}} \mathcal{M}$. These imply (\romannumeral2) immediately.
	\end{proof}\vspace{-5pt}
	
	\subsection{Relationships among different stationary points of problem \eqref{problemn1}.}
	In this subsection, we first give specific characterizations on the B-stationarity of problem \eqref{problemn1}. Based on these characterizations, we further introduce another two concepts of stationarity for problem \eqref{problemn1}. Finally, we examine the relationships among the B-stationary points, Riemannian C-stationary points, lifted B-stationary points and critical points of problem \eqref{problemn1}.
	
	We begin with an equivalent characterization for B-stationary point of problem \eqref{problemn1}.
	\begin{proposition} \label{prop23_dStationary}
		A vector $\overline{x} \in \mathcal{M}$ is a B-stationary point for problem \eqref{problemn1} if and only if
		\begin{equation} \label{proposition:d_stationary}
			\partial h_2(\overline{x}) + \frac{\partial f(\overline{x})}{g(\overline{x})} \subseteq \partial h_1(\overline{x}) + \nabla r(\overline{x}) + \frac{f(\overline{x})}{g^2(\overline{x})} \nabla g(\overline{x}) + \mathrm{N}_{\overline{x}}\mathcal{M};
		\end{equation}
		or equivalently,
		\begin{equation} \label{proposition:d_stationary2}
			\partial_Ch_2\vert_{\mathcal{M}}(\overline{x}) + \frac{\partial_Cf\vert_{\mathcal{M}}(\overline{x})}{g(\overline{x})}\subseteq \partial_C h_1\vert_{\mathcal{M}}(\overline{x})+ {\rm grad} r\vert_{\mathcal{M}} (\overline{x}) + \frac{f(\overline{x})}{g^2( \overline{x})}{\rm grad} g\vert_{\mathcal{M}}(\overline{x}).
		\end{equation}
	\end{proposition}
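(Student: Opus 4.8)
The plan is to rewrite the inequality defining B-stationarity of $F$ at $\overline{x}$, namely $F^{\prime}(\overline{x};v)\ge 0$ for all $v\in\mathrm{T}_{\overline{x}}\mathcal{M}$, as a pointwise inequality between support functions, and then convert support-function domination into the set inclusion \eqref{proposition:d_stationary}; the equivalence of \eqref{proposition:d_stationary} with \eqref{proposition:d_stationary2} will then follow by projecting onto $\mathrm{T}_{\overline{x}}\mathcal{M}$. For a nonempty set $S\subseteq\mathbb{R}^n$ write $\sigma_S(v):=\sup_{y\in S}\langle y,v\rangle$, and recall the elementary facts $\sigma_{S_1+S_2}=\sigma_{S_1}+\sigma_{S_2}$, $\sigma_{\lambda S}=\lambda\,\sigma_S$ for $\lambda\ge 0$, that for a subspace $N$ the function $\sigma_N$ equals $0$ on $N^{\perp}$ and $+\infty$ elsewhere, and that for a nonempty closed convex set $S$ one has $S^{\prime}\subseteq S\iff\sigma_{S^{\prime}}\le\sigma_S$ pointwise.

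First I would invoke Lemma~\ref{lemma22} for the formula of $F^{\prime}(\overline{x};v)$; all the directional derivatives appearing there exist by Assumption~\ref{ass1}, as already noted in the proof of Lemma~\ref{lemma22}. Because $h_1,h_2$ are convex we have $h_i^{\prime}(\overline{x};v)=\sigma_{\partial h_i(\overline{x})}(v)$; because $r,g$ are differentiable, $r^{\prime}(\overline{x};v)=\langle\nabla r(\overline{x}),v\rangle$ and $g^{\prime}(\overline{x};v)=\langle\nabla g(\overline{x}),v\rangle$; and because $\sqrt f$ is weakly convex, hence C-regular (as used for Lemma~\ref{12_8_lemma21_definition}(iii)), $(\sqrt f)^{\prime}(\overline{x};v)=\sigma_{\partial\sqrt f(\overline{x})}(v)$. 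Since $g(\overline{x})>0$ by Assumption~\ref{ass1}(iv) and $2\sqrt{f(\overline{x})}\,\partial\sqrt f(\overline{x})=\partial f(\overline{x})$ by Lemma~\ref{12_8_lemma21_definition}(ii), we get $\tfrac{2\sqrt{f(\overline{x})}}{g(\overline{x})}\sigma_{\partial\sqrt f(\overline{x})}(v)=\sigma_{\partial f(\overline{x})/g(\overline{x})}(v)$. Writing $w:=\nabla r(\overline{x})+\tfrac{f(\overline{x})}{g^2(\overline{x})}\nabla g(\overline{x})$ and collecting terms, Lemma~\ref{lemma22} gives $F^{\prime}(\overline{x};v)=\sigma_{\partial h_1(\overline{x})+w}(v)-\sigma_{\partial h_2(\overline{x})+\partial f(\overline{x})/g(\overline{x})}(v)$, so B-stationarity of $\overline{x}$ amounts to
\[
\sigma_{S_1}(v)\ \le\ \sigma_{S_2}(v)\qquad\text{for all }v\in\mathrm{T}_{\overline{x}}\mathcal{M},
\]
where $S_1:=\partial h_2(\overline{x})+\partial f(\overline{x})/g(\overline{x})$ and $S_2:=\partial h_1(\overline{x})+w$.

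Now $S_1$ and $S_2$ are nonempty, compact and convex — the subdifferential of a finite convex function at a point is so, and by Lemma~\ref{12_8_lemma21_definition}(ii) together with the weak convexity of $\sqrt f$ the set $\partial f(\overline{x})$ is nonempty, compact and convex as well — while $\mathrm{N}_{\overline{x}}\mathcal{M}=(\mathrm{T}_{\overline{x}}\mathcal{M})^{\perp}$ is a subspace. Hence the displayed inequality holds iff $\sigma_{S_1}\le\sigma_{S_2}+\sigma_{\mathrm{N}_{\overline{x}}\mathcal{M}}=\sigma_{S_2+\mathrm{N}_{\overline{x}}\mathcal{M}}$ on all of $\mathbb{R}^n$ (off $\mathrm{T}_{\overline{x}}\mathcal{M}$ the right side is $+\infty$ while $\sigma_{S_1}$ is finite-valued), and since $S_2+\mathrm{N}_{\overline{x}}\mathcal{M}$ is closed and convex this is precisely $S_1\subseteq S_2+\mathrm{N}_{\overline{x}}\mathcal{M}$, i.e.\ \eqref{proposition:d_stationary}. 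For the equivalence of \eqref{proposition:d_stationary} with \eqref{proposition:d_stationary2}, I would use that for a subspace $N$, $T:=N^{\perp}$, and arbitrary sets $S,S^{\prime}$, one has $S\subseteq S^{\prime}+N\iff\mathrm{Proj}_T S\subseteq\mathrm{Proj}_T S^{\prime}$ (forward: apply the linear map $\mathrm{Proj}_T$ and use $\mathrm{Proj}_T N=\{0\}$; backward: write $z\in S$ as $\mathrm{Proj}_T z+\mathrm{Proj}_N z$, pick $z^{\prime}\in S^{\prime}$ with $\mathrm{Proj}_T z^{\prime}=\mathrm{Proj}_T z$, and note $z-z^{\prime}\in N$). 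Applying this with $N=\mathrm{N}_{\overline{x}}\mathcal{M}$, $S=S_1$, $S^{\prime}=S_2$ and projecting termwise onto $\mathrm{T}_{\overline{x}}\mathcal{M}$, the Section~2 identities $\mathrm{Proj}_{\mathrm{T}_{\overline{x}}\mathcal{M}}\partial_C\varphi(\overline{x})=\partial_C\varphi|_{\mathcal{M}}(\overline{x})$ for C-regular $\varphi$ (applicable to $h_1,h_2$, and to $f$ by Lemma~\ref{12_8_lemma21_definition}(iii), with $\partial=\partial_C$ throughout) and $\mathrm{Proj}_{\mathrm{T}_{\overline{x}}\mathcal{M}}\nabla\varphi(\overline{x})=\mathrm{grad}\,\varphi|_{\mathcal{M}}(\overline{x})$ for $\varphi\in\{r,g\}$ turn \eqref{proposition:d_stationary} into \eqref{proposition:d_stationary2}.

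The only step that needs real care is the passage through $\mathrm{N}_{\overline{x}}\mathcal{M}$ in the support-function argument: the defining inequality is imposed only along tangent directions, and this one-sided restriction must be matched exactly by adjoining the subspace $\mathrm{N}_{\overline{x}}\mathcal{M}$ to the right-hand set, whose support function is $0$ on $\mathrm{T}_{\overline{x}}\mathcal{M}$ and $+\infty$ off it; once that correspondence is in place, everything else is routine convex calculus built on Lemmas~\ref{lemma22} and~\ref{12_8_lemma21_definition}.
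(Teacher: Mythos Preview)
Your proposal is correct and follows essentially the same approach as the paper: both reduce B-stationarity, via Lemma~\ref{lemma22} and Lemma~\ref{12_8_lemma21_definition}, to an inequality of the form $h_1^{\prime}(\overline{x};v)\ge\max_{y\in\overline S}\langle y,v\rangle$ on $\mathrm{T}_{\overline{x}}\mathcal{M}$ and then convert this into the set inclusion \eqref{proposition:d_stationary}. The only cosmetic difference is that the paper carries out the conversion by recasting the inequality as $0_n\in\arg\min_{v\in\mathrm{T}_{\overline{x}}\mathcal{M}}\{h_1(\overline{x}+v)-\langle y,v\rangle\}$ and invoking Fermat's rule, whereas you phrase the same step as support-function domination $\sigma_{S_1}\le\sigma_{S_2+\mathrm{N}_{\overline{x}}\mathcal{M}}$; your projection lemma for the equivalence \eqref{proposition:d_stationary}$\Leftrightarrow$\eqref{proposition:d_stationary2} is also a slightly more explicit version of the paper's one-line appeal to C-regularity and \cite[Theorem~5.1]{Yang-Zhang-Song:PAC_J_OPTIM:2014}.
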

	\begin{proof}
		Let $\overline{S}\subseteq \mathbb{R}^n$ be defined by
		\begin{equation} \label{proof:proposition:d_stationary:S_overline}
			\overline{S} := \partial h_2(\overline{x}) + \frac{\partial f(\overline{x})}{g(\overline{x})} - \nabla r(\overline{x}) - \frac{f(\overline{x})}{g^2(\overline{x})} \nabla g(\overline{x}).
		\end{equation}
		It is clear that $\overline{S}$ is compact and $F$ is locally Lipschitz continuous on an open set containing $\mathcal{M}$, thanks to Assumption~\ref{ass1}. We also obtain from Assumption~\ref{ass1} that the directional derivatives $h_1'(x;v)$, $h_2'(x;v)$, $r'(x;v)$, $g'(x;v)$ and $f'(x;v)$ are all well defined for all $x\in\mathcal{M}$ and $v\in\mathbb{R}^n$. According to the definition, $\overline{x}\in\mathcal{M}$ is a B-stationary point of problem \eqref{problemn1} if for any $v\in\mathrm{T}_{\overline{x}}\mathcal{M}$, $F^\prime(\overline{x};v)\geq 0$. This is equivalent to
		\begin{displaymath}
			\begin{aligned}
				h_1^\prime(\overline{x};v) &\geq \left( h_2+\frac{1}{g(\overline{x})}f-r-\frac{f(\overline{x})}{g^2(\overline{x})}g \right)^\prime(\overline{x};v)	\\
				&= \max \limits_{y\in\overline{S}}~ \langle y,v \rangle,
			\end{aligned}
		\end{displaymath}
		where the equality holds due to the (weakly) convexity of $h_2$ and $f$, the smoothness of $r$ and $g$, as well as the compactness of $\overline{S}$. The above display indicates that $\overline{x}\in\mathcal{M}$ is a B-stationary point of $F$ if and only if for any $v\in\mathrm{T}_{\overline{x}}\mathcal{M}$ and $y\in\overline{S}$, $h_1^\prime(\overline{x};v) \geq \langle y,v \rangle$, which is also equivalent to the statement that 
		\begin{equation}	\label{proof:proposition:d_stationary:star2}
			0_n \in \mathop{\arg\min}\limits_{v\in\mathrm{T}_{\overline{x}}\mathcal{M}}\left\{ h_1(\overline{x}+v)-\langle y,v \rangle \right\}, ~ \forall y\in\overline{S},
		\end{equation}
		thanks to the convexity of $h_1$. In view of the Fermat's rule, the inclusion \eqref{proof:proposition:d_stationary:star2} holds if and only if
		\begin{equation} \label{proof:proposition:d_stationary:star3}
			y\in \partial h_1(\overline{x}) + \mathrm{N}_{\overline{x}}\mathcal{M}, ~ \forall y\in\overline{S}.
		\end{equation}
		Combining \eqref{proof:proposition:d_stationary:S_overline} and \eqref{proof:proposition:d_stationary:star3}, we immediately obtain the inclusion \eqref{proposition:d_stationary}. Finally, \eqref{proposition:d_stationary2} follows from the C-regularity of $h_1,h_2,f,g$ and $r$ as well as \cite[Theorem 5.1]{Yang-Zhang-Song:PAC_J_OPTIM:2014}.
	\end{proof}
	
	Inspired by Proposition~\ref{prop23_dStationary}, we further introduce the notions of lifted B-stationary points and critical points.
	
	\begin{definition} \label{definition:lifted_sta_And_critical}
		For problem \eqref{problemn1}, we say that $\overline{x}\in\mathcal{M}$ is 
		\begin{enumerate}[label = {\upshape(\roman*)}]
			\item a lifted B-stationary point if there exists $\overline{y}\in\partial f(\overline{x})$ such that
			\begin{equation} \label{definition:lifted_d_stationary_point}
				\partial h_2(\overline{x}) + \frac{\overline{y}}{g(\overline{x})} \subseteq \partial h_1(\overline{x}) + \nabla r(\overline{x}) + \frac{f(\overline{x})}{g^2(\overline{x})}\nabla g(\overline{x}) + \mathrm{N}_{\overline{x}}\mathcal{M};
			\end{equation}
			\item a critical point if 
			\begin{equation} \label{definition:critical_point}
				0_n \in \partial h_1(\overline{x}) - \partial h_2(\overline{x}) + \nabla r(\overline{x}) + \frac{f(\overline{x})}{g^2(\overline{x})}\nabla g(\overline{x}) - \frac{\partial f(\overline{x})}{g(\overline{x})} + \mathrm{N}_{\overline{x}}\mathcal{M}.
			\end{equation}
		\end{enumerate}
	\end{definition}
	
	By the C-regularity of $h_1,h_2,f,g$ and $r$, we obtain the following corollary immediately from \cite[Theorem 5.1]{Yang-Zhang-Song:PAC_J_OPTIM:2014}
	\begin{corollary}\label{corollary*}
		For problem \eqref{problemn1}, $\overline{x} \in \mathcal{M}$ is
		\begin{enumerate}[label = {\upshape(\roman*)}]
			\item a lifted B-stationary point if and only if
			$$\partial_C h_2\vert_{\mathcal{M}}(\overline{x}) + \frac{\overline{y}^\prime}{g(\overline{x})} \subseteq \partial_C h_1\vert_{\mathcal{M}}(\overline{x}) + {\rm grad}r\vert_{\mathcal{M}} (\overline{x})+ \frac{f(\overline{x})}{g^2 (\overline{x})}{\rm grad}g\vert_{\mathcal{M}}( \overline{x})$$ 
			for some ${\overline{y}}^\prime \in \partial_C f\vert_{\mathcal{M}}(\overline{x})$;
			\item a critical point if and only if $$ 0_n \in \partial_C h_1\vert_{\mathcal{M}}( \overline{x})-\partial_C h_2\vert_{\mathcal{M}}( \overline{x})+{\rm grad}r\vert_{\mathcal{M}}(\overline{x}) +\frac{f(\overline{x})}{g^2 (\overline{x})}{\rm grad}g\vert_{\mathcal{M}}( \overline{x})-\frac{\partial_C f\vert_{\mathcal{M}} ( \overline{x})}{g(\overline{x})}.$$
		\end{enumerate}
	\end{corollary}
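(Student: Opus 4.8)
The plan is to derive both equivalences directly from the Euclidean characterizations in Definition~\ref{definition:lifted_sta_And_critical} by applying the orthogonal projection $P:=\mathrm{Proj}_{\mathrm{T}_{\overline{x}}\mathcal{M}}$ to the defining inclusions. The key point is that every function appearing there is C-regular at $\overline{x}$ along $\mathrm{T}_{\overline{x}}\mathcal{M}$: $h_1$ and $h_2$ are convex, $f$ is C-regular by Lemma~\ref{12_8_lemma21_definition}(iii), and $g$, $r$ are continuously differentiable. Hence \cite[Theorem 5.1]{Yang-Zhang-Song:PAC_J_OPTIM:2014} gives $\partial_C\varphi|_{\mathcal{M}}(\overline{x}) = P\,\partial_C\varphi(\overline{x})$ for each such $\varphi$, and in the smooth case $\mathrm{grad}\,\varphi|_{\mathcal{M}}(\overline{x}) = P\,\nabla\varphi(\overline{x})$. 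Moreover, for the convex functions $h_1,h_2$ and for the C-regular function $f$ one has $\partial_C\varphi(\overline{x})=\partial\varphi(\overline{x})$, so the ordinary subdifferentials used in Definition~\ref{definition:lifted_sta_And_critical} are exactly the Clarke subdifferentials to which Theorem~5.1 applies.

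The first elementary fact I would isolate is that, since $\mathrm{N}_{\overline{x}}\mathcal{M}=(\mathrm{T}_{\overline{x}}\mathcal{M})^{\perp}$ and $P$ is linear with $P$ vanishing on $\mathrm{N}_{\overline{x}}\mathcal{M}$, for any subsets $S,T\subseteq\mathbb{R}^n$ the following hold: (a) $T\subseteq S+\mathrm{N}_{\overline{x}}\mathcal{M}$ if and only if $P(T)\subseteq P(S)$, where the forward direction follows by applying $P$ and using $P(S+\mathrm{N}_{\overline{x}}\mathcal{M})=P(S)$, and the converse holds because $P(t)\in P(S)$ means $t-s\in\mathrm{N}_{\overline{x}}\mathcal{M}$ for a suitable $s\in S$, whence $t\in S+\mathrm{N}_{\overline{x}}\mathcal{M}$; and (b) $0_n\in S+\mathrm{N}_{\overline{x}}\mathcal{M}$ if and only if $0_n\in P(S)$, using that $\mathrm{N}_{\overline{x}}\mathcal{M}$ is a linear subspace. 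These two equivalences convert the membership of a point and the inclusion of a set ``modulo $\mathrm{N}_{\overline{x}}\mathcal{M}$'' into statements purely about the images under $P$, which are then identified with Riemannian objects via Theorem~5.1 together with the set identities $P(A\pm B)=P(A)\pm P(B)$ and $P(\alpha A)=\alpha P(A)$.

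For Item~(i), I would apply (a) with $S=\partial h_1(\overline{x})+\nabla r(\overline{x})+\frac{f(\overline{x})}{g^2(\overline{x})}\nabla g(\overline{x})$ and $T=\partial h_2(\overline{x})+\frac{\overline{y}}{g(\overline{x})}$, so that $P(S)=\partial_C h_1|_{\mathcal{M}}(\overline{x})+\mathrm{grad}\,r|_{\mathcal{M}}(\overline{x})+\frac{f(\overline{x})}{g^2(\overline{x})}\mathrm{grad}\,g|_{\mathcal{M}}(\overline{x})$ and $P(T)=\partial_C h_2|_{\mathcal{M}}(\overline{x})+\frac{P(\overline{y})}{g(\overline{x})}$. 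The only subtlety is the existential quantifier on $\overline{y}$: one must observe that $\{P(\overline{y}):\overline{y}\in\partial f(\overline{x})\}=P(\partial_C f(\overline{x}))=\partial_C f|_{\mathcal{M}}(\overline{x})$ by Theorem~5.1 and the C-regularity of $f$, so that ``there exists $\overline{y}\in\partial f(\overline{x})$ satisfying \eqref{definition:lifted_d_stationary_point}'' is equivalent to ``there exists $\overline{y}'\in\partial_C f|_{\mathcal{M}}(\overline{x})$ satisfying the projected inclusion'' — no stationary points are lost in either direction. For Item~(ii), I would instead apply (b) with $S=\partial h_1(\overline{x})-\partial h_2(\overline{x})+\nabla r(\overline{x})+\frac{f(\overline{x})}{g^2(\overline{x})}\nabla g(\overline{x})-\frac{\partial f(\overline{x})}{g(\overline{x})}$ and push $P$ through the signed Minkowski sums. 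I do not expect a genuine obstacle; the only points requiring care are the correct use of the set-valued linearity of $P$ and the per-function invocation of \cite[Theorem 5.1]{Yang-Zhang-Song:PAC_J_OPTIM:2014}, together with the bookkeeping of the existential $\overline{y}$ in Item~(i).
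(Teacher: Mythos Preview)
Your proposal is correct and follows essentially the same route as the paper: both derive the Riemannian characterizations from the Euclidean ones in Definition~\ref{definition:lifted_sta_And_critical} by invoking the C-regularity of $h_1,h_2,f,g,r$ and \cite[Theorem 5.1]{Yang-Zhang-Song:PAC_J_OPTIM:2014}. The paper in fact states only that the corollary follows ``immediately'' from these ingredients, so your write-up is a faithful (and more detailed) realization of that argument, including the care you take with the existential quantifier on $\overline{y}$ in Item~(i) and with the set-valued linearity of the projection.
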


	To close this section, we finally examine the relationships among B-stationary points, Riemannian C-stationary points, lifted B-stationary points and critical points in the next proposition. These relationships are also illustrated in Figure \ref{fig1}.
	\begin{figure}[h]
		\centering
		\includegraphics[width=1\linewidth]{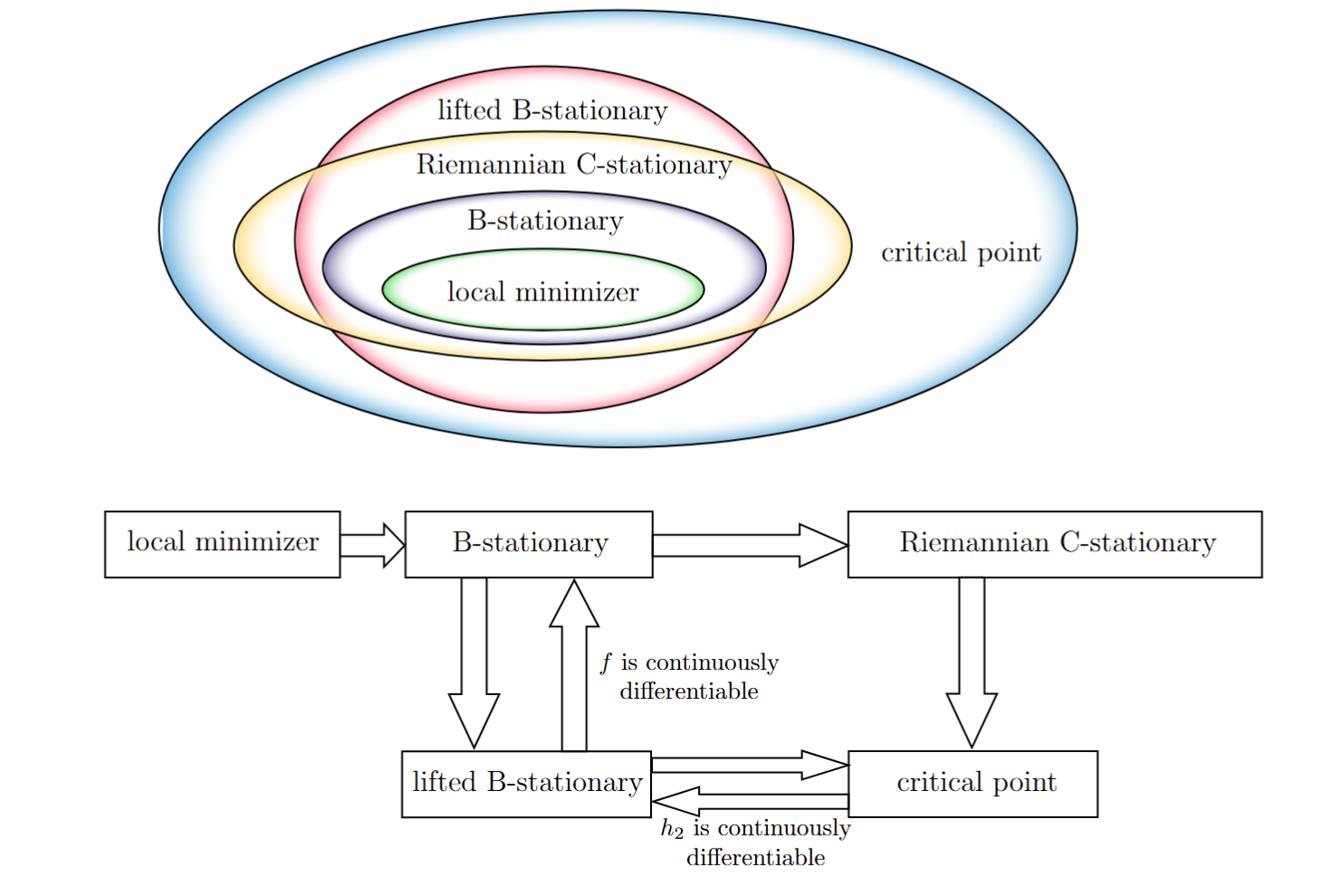}
		\caption{The relationships among local minimizers and various stationary points of problem \eqref{problemn1}.}
		\label{fig1}
	\end{figure}
	
	\begin{proposition}	\label{proposition_28_definition}
		Consider problem \eqref{problemn1}. Then the following statements hold:
		\begin{enumerate}[label = {\upshape(\roman*)}]
			\item If $x^*\in\mathcal{M}$ is a local minimizer of problem \eqref{problemn1}, then it is a B-stationary point of problem \eqref{problemn1};
			\item If $x^{*} \in \mathcal{M}$ is a B-stationary point for problem \eqref{problemn1}, then it is a lifted B-stationary point and a Riemannian C-stationary point of problem \eqref{problemn1};
			\item If $x^{*} \in \mathcal{M}$ is a lifted B-stationary point of problem \eqref{problemn1}, then it is a critical point of problem \eqref{problemn1};
			\item If $x^{*} \in \mathcal{M}$ is a Riemannian C-stationary point of problem \eqref{problemn1}, then it is a critical point of problem \eqref{problemn1};
			\item When f is continuously differentiable at $x^{*} \in \mathcal{M}$, $x^{*}$ is a B-stationary point of problem \eqref{problemn1} if and only if $x^{*}$ is a lifted B-stationary point of problem \eqref{problemn1};
			\item When $h_2$ is continuously differentiable at $x^*\in\mathcal{M}$, $x^*$ is a lifted B-stationary point of problem \eqref{problemn1} if and only if $x^*$ is a critical point of problem \eqref{problemn1};
		\end{enumerate}
	\end{proposition}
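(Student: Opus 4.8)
The plan is to split the six assertions into the ``bookkeeping'' ones (i), (iii), (v), (vi), which only manipulate the inclusions \eqref{proposition:d_stationary}, \eqref{definition:lifted_d_stationary_point}, \eqref{definition:critical_point}, and the two that genuinely use the Riemannian Clarke subdifferential, namely (ii) and (iv). First I would record the two facts used repeatedly: $F$ is locally Lipschitz on an open set containing $\mathcal{M}$ (from Assumption~\ref{ass1}), and by Lemma~\ref{lemma22} the directional derivative $F^\prime(x;v)$ exists for every $x\in\mathcal{M}$ and every $v\in\mathbb{R}^n$, so $F$ is directionally differentiable at each point of $\mathcal{M}$. Then (i) is immediate from Proposition~\ref{Prop:B-stationaryPoint}(i). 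For (iii), Definition~\ref{definition:lifted_sta_And_critical}(i) provides $\overline y\in\partial f(x^*)$ with $\partial h_2(x^*)+\overline y/g(x^*)$ contained in the right-hand side of \eqref{definition:lifted_d_stationary_point}; picking any $a\in\partial h_2(x^*)$ (nonempty since $h_2$ is convex) and rearranging the resulting membership produces exactly \eqref{definition:critical_point}. For (v), continuous differentiability of $f$ at $x^*$ gives $\partial f(x^*)=\{\nabla f(x^*)\}$, so the existential $\overline y$ in Definition~\ref{definition:lifted_sta_And_critical}(i) is forced to equal $\nabla f(x^*)$ and \eqref{definition:lifted_d_stationary_point} becomes literally the B-stationarity characterization \eqref{proposition:d_stationary} of Proposition~\ref{prop23_dStationary}; hence, using (ii) for one direction, B-stationarity and lifted B-stationarity coincide. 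For (vi), continuous differentiability of $h_2$ at $x^*$ makes $\partial h_2(x^*)$ a singleton, so the inclusion in \eqref{definition:lifted_d_stationary_point} collapses to a single membership which, after rearrangement, is identical to \eqref{definition:critical_point}; combined with (iii) this gives the stated equivalence.

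For (ii) I would argue the two conclusions separately. Lifted B-stationarity follows at once from Proposition~\ref{prop23_dStationary}: the inclusion \eqref{proposition:d_stationary} holds with the \emph{entire} set $\partial f(x^*)/g(x^*)$ on the left, hence a fortiori with any single $\overline y\in\partial f(x^*)$; here $\partial f(x^*)\neq\emptyset$ by Lemma~\ref{12_8_lemma21_definition} (it is $\{0_n\}$ when $f(x^*)=0$ and $2\sqrt{f(x^*)}\,\partial\sqrt f(x^*)$ otherwise, with $\partial\sqrt f(x^*)\neq\emptyset$ by weak convexity of $\sqrt f$). For Riemannian C-stationarity I would apply item~(i) of the proposition stated immediately after Proposition~\ref{Prop:B-stationaryPoint} with $\varphi=F$: its hypotheses were verified above, so B-stationarity of $F$ on $\mathcal{M}$ yields $0_n\in\partial_C F|_{\mathcal{M}}(x^*)$, which is precisely Riemannian C-stationarity of problem \eqref{problemn1}.

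This leaves (iv), which I regard as the \textbf{main obstacle}. Writing $F=h_1-h_2+r-f/g$ and applying the sum and difference rules for the Riemannian Clarke subdifferential recalled in Section~2, together with $\partial_C r|_{\mathcal{M}}(x^*)=\{\mathrm{grad}\,r|_{\mathcal{M}}(x^*)\}$, I would obtain $\partial_C F|_{\mathcal{M}}(x^*)\subseteq \partial_C h_1|_{\mathcal{M}}(x^*)-\partial_C h_2|_{\mathcal{M}}(x^*)+\mathrm{grad}\,r|_{\mathcal{M}}(x^*)-\partial_C(f/g)|_{\mathcal{M}}(x^*)$. Since $g$ is smooth and positive on $\mathcal{M}$, the function $1/g$ is smooth on $\mathcal{M}$ with $\mathrm{grad}(1/g)|_{\mathcal{M}}(x^*)=-g^{-2}(x^*)\,\mathrm{grad}\,g|_{\mathcal{M}}(x^*)$, so the product rule applied to $f\cdot(1/g)$ gives $\partial_C(f/g)|_{\mathcal{M}}(x^*)\subseteq g^{-1}(x^*)\,\partial_C f|_{\mathcal{M}}(x^*)-\frac{f(x^*)}{g^2(x^*)}\,\mathrm{grad}\,g|_{\mathcal{M}}(x^*)$. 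Chaining the two inclusions, $0_n\in\partial_C F|_{\mathcal{M}}(x^*)$ puts $0_n$ into the set appearing in Corollary~\ref{corollary*}(ii), i.e.\ $x^*$ is a critical point. The only point I expect to require care is the availability of the Riemannian Clarke product/quotient rule in exactly this form; should a direct reference be unavailable, the fallback is to lift to the ambient space, using the C-regularity of $f$ and $g$ (Lemma~\ref{12_8_lemma21_definition}(iii)) and the identity $\partial_C\varphi|_{\mathcal{M}}(x^*)=\mathrm{Proj}_{\mathrm{T}_{x^*}\mathcal{M}}\partial_C\varphi(x^*)$ to transport the Euclidean Clarke quotient rule for $f/g$ onto $\mathcal{M}$, and then to read off \eqref{definition:critical_point} directly. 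Besides this, the only non-formal checks are the nonemptiness of $\partial f(x^*)$ and $\partial h_2(x^*)$ and the singleton property of the limiting subdifferential of a $C^1$ function, both already recorded in Section~2.
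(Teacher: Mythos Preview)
Your treatment of items (i), (ii), (iii), (v), (vi) is correct and matches the paper's approach; these indeed reduce to the inclusions in Proposition~\ref{prop23_dStationary}, Definition~\ref{definition:lifted_sta_And_critical} and Corollary~\ref{corollary*}, together with Proposition~\ref{Prop:B-stationaryPoint} and the proposition immediately following it.

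For item (iv) you correctly flag the main obstacle, but there is a genuine gap in both routes you sketch. Your primary approach invokes a ``Riemannian Clarke product rule'' for $f\cdot(1/g)$, which is not among the calculus rules recorded in Section~2 (only the sum and difference inclusions are stated there). Your fallback --- compute $\partial_C(f/g)$ in the ambient space and then transport it to $\mathcal{M}$ via the identity $\partial_C\varphi|_{\mathcal{M}}(x^*)=\mathrm{Proj}_{\mathrm{T}_{x^*}\mathcal{M}}\partial_C\varphi(x^*)$ --- is the right idea, but that projection identity (from \cite[Theorem~5.1]{Yang-Zhang-Song:PAC_J_OPTIM:2014}) requires C-regularity of $\varphi=f/g$ \emph{itself} along $\mathrm{T}_{x^*}\mathcal{M}$, not merely of $f$ and $g$ separately. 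C-regularity of $f$ (Lemma~\ref{12_8_lemma21_definition}(iii)) together with smoothness of $g$ does not automatically yield C-regularity of the quotient, and without it you have no link between $\partial_C(f/g)|_{\mathcal{M}}$ and $\mathrm{Proj}_{\mathrm{T}_{x^*}\mathcal{M}}\partial_C(f/g)$.

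The paper closes exactly this gap by proving C-regularity of $f/g$ directly: it computes $(f/g)^\circ(x;v)$ from the definition, expands the difference quotient as in the proof of Lemma~\ref{lemma22}, and uses the continuous differentiability of $g$, the continuity of $f$, and the C-regularity of $f$ to identify the $\limsup$ with $f^\prime(x;v)/g(x)-(f(x)/g^2(x))\,g^\prime(x;v)=(f/g)^\prime(x;v)$. Once $f/g$ is C-regular, the projection identity applies, and the Euclidean quotient rule for the limiting subdifferential (\cite[Corollary~1.111]{Mordukhovich:VariationalAnalysis_I:2006}) gives the equality \eqref{pro3.9_pf_1}; the rest of your argument for (iv) then goes through. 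This C-regularity computation is the missing ingredient in your outline.
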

	\begin{proof}
		Item (\romannumeral2)-(\romannumeral3) and (\romannumeral5)-(\romannumeral6) are direct consequence of Proposition \ref{prop23_dStationary}, Definition \ref{definition:lifted_sta_And_critical} and Corollary \ref{corollary*}. Item (\romannumeral1) follows from Proposition \ref{Prop:B-stationaryPoint}. We next prove Item (\romannumeral4). It suffices to prove
		\begin{align}
			\label{pro3.9_pf_1}
			\partial_C\frac{f\vert_{\mathcal{M}}}{g\vert_{\mathcal{M}}} (\overline{x}) = \frac{\partial_C f\vert_{\mathcal{M}}(\overline{x})}{g(\overline{x})} - \frac{f(\overline{x})}{g^2(\overline{x})} {\rm grad} g\vert_{\mathcal{M}} (\overline{x}),
		\end{align}
		since we know $\partial_CF\vert_\mathcal{M}( \overline{x})\subseteq\partial_C h_1\vert_{\mathcal{M}} (\overline{x}) - \partial_C h_2\vert_{\mathcal{M}} (\overline{x}) + {\rm grad} r\vert_{\mathcal{M}}(\overline{x}) + \partial_C\frac{f\vert_{\mathcal{M}}}{ g\vert_{\mathcal{M}}}(\overline{x})$. In order to prove \eqref{pro3.9_pf_1}, we first show $f/g$ is C-regular at $\overline{x}$. For all $x\in \mathcal{M}$ and $v\in \mathbb{R}^n$, we have
		\begin{align}
			\left(\frac{f}{g} \right)^\circ(x;v) &= \limsup_{\substack{y\rightarrow x \\ \tau \searrow 0}}\frac{\frac{f(y +\tau v)}{g(y +\tau v)}-\frac{f(y)}{g(y)}}{\tau}\notag\\
			&=\limsup_{\substack{y\rightarrow x \\ \tau \searrow 0}}\frac{g(y)(f(y+\tau v)-f(y)) - f(y)(g(y+\tau v)-g(y))}{\tau g(y+\tau v)g(y)}.\notag
		\end{align}
		Thanks to the continuous differentiability of $g$ and the continuity of $f$, it follows from the above equality that
		\begin{displaymath}
			\begin{aligned}
				\left(\frac{f}{g} \right)^\circ(x;v)& = \limsup_{\substack{y\rightarrow x \\ \tau \searrow 0}}\frac{g(y)(f(y+\tau v)-f(y))}{\tau g(y+\tau v)g(y)}-\frac{f(x)}{g^2(x)} \langle\nabla g(x),v \rangle\\
				&=\frac{f^\prime(x;v)}{g(x)}-\frac{f(x)}{ g^2(x)}g^\prime(x;v)\\
				&=\left(\frac{f}{g}\right)^\prime(x;v),
			\end{aligned}
		\end{displaymath}
		where the second equation follows from the C-regularity of $f$ and the continuity of $g$. This leads to the C-regularity of $\frac{f}{g}$. Therefore, we get $\partial_C\frac{f}{g}(\overline{x})= \partial \frac{f}{g}(\overline{x}) = \frac{1}{g( \overline{x})}\partial f(\overline{x}) - \frac{f(\overline{x})}{g^2(\overline{x})} \nabla g(\overline{x})$ by invoking \cite[Corollary 1.111]{Mordukhovich:VariationalAnalysis_I:2006}, the C-regularity of $f(\overline{x})$ and the positivity of $g(\overline{x})$, as well as the Lipschitz differentiability of $g$. This yields that $\partial_C\frac{f}{g}( \overline{x})=\frac{1}{g(\overline{x})}\partial_C f(\overline{x})-\frac{f(\overline{x})}{g^2 (\overline{x})}\nabla g(\overline{x})$. In view of the C-regularity of $f/g$ and $f$, \eqref{pro3.9_pf_1} follows immediately. We complete the proof.
	\end{proof}

\section{A manifold proximal-gradient-subgradient algorithm for solving \eqref{problemn1}.}
	In this section, we propose a manifold proximal-gradient-subgradient algorithm (MPGSA) for solving problem \eqref{problemn1} and establish its subsequential convergence under mild conditions.
	
	Our algorithm is presented as MPGSA in Algorithm \ref{algorithm1}. The design of our MPGSA is motivated by the ManPG algorithm \cite{Chen-Ma-Anthony-Zhang:SIOPT:2020}. Before proceeding we make two remarks as follows:
	\par (\romannumeral1) Given an iterate $x^k\in\mathcal{M}$, step 2 of MPGSA computes a potential descent direction $v^k$ of $F$ restricted to the tangent space $\mathrm{T}_{x^k}\mathcal{M}$ by tackling the strongly convex optimization problem \eqref{eqAlgV}, which occupies the major computational cost of the proposed algorithm. For all $k\in \mathbb{N}$, let $E_k\in\mathbb{R}^{n\times d}$ such that its columns form a basis of the normal space $\mathrm{N}_{X^k}\mathcal{M}$ and thus $\mathrm{T}_{X^k} \mathcal{M}=\{v\in\mathbb{R}^n:{E_k}^Tv=0_d\}$. In the case where $h_1$ is the $\ell_1$ norm, \eqref{eqAlgV} is well-structured and can be efficiently solved to a high accuracy by the semismooth Newton method developed in \cite[Section 4.2]{Chen-Ma-Anthony-Zhang:SIOPT:2020}. Next we are concentrated on solving \eqref{eqAlgV} in a more general case where the proximal operator associated with $h_1$ can be efficiently evaluated\footnote{The proximal operator associated with $h_1$ is defined as $\mathrm{prox}_{h_1}(x)=\mathop{\arg\min}\limits_{y \in \mathbb{R}^n}\{h_1(y)+\frac{1}{2}\| y-x \|^2 \}.$} but $h_1$ is not necessarily the $\ell_1$ norm. Recall from \cite[Definition 12.20]{Bauschke-Combettes:HilbertSpaces:2017} that for a proper, closed and convex function $\varphi:\mathbb{R}^n\rightarrow \overline{\mathbb{R}}$, the Moreau envelope of $\varphi$ with the parameter $\beta>0$ is defined at $x\in\mathbb{R}^n$ as $$\varphi_\beta(x):=\min\limits_{y\in\mathbb{R}^n} \left\{\varphi(y)+ \frac{1}{2\beta}\| y-x \|^2 \right\}.$$ It is not hard to verify that the dual problem of \eqref{eqAlgV} is also convex and can be formulated into 
	\begin{equation} \label{dual_manpg}
		\min\limits_{u\in\mathbb{R}^n} \left\{\frac{t_k}{2} \|E_ku-w^k\|_2^2-(h_1)_{t_k}(t_kE_ku-t_kw^k+x^k) \right\},
	\end{equation}
	where $w^k=\nabla r(x^k)+\frac{f(x^k)\nabla g(x^k)}{g^2 (x^k)}-\frac{y^k}{g(x^k)}-z^k$. In view of \cite[Proposition 2.29]{Bauschke-Combettes:HilbertSpaces:2017}, $(h_1)_{t_k}$ is differentiable and its gradient $\nabla (h_1)_{t_k} := \frac{1}{t_k}(I_n-\mathrm{prox}_{t_kh_1})$ is Lipschitz continuous with the Lipschitz constant $\frac{1}{t_k}$. Hence, one can directly apply the Nesterov's method \cite{nesterov2013gradient} or FISTA \cite{beck2009fast} to problem \eqref{dual_manpg} and consequently obtain a global optimal solution $u^k$. Invoking this and \cite[Proposition 19.3]{Bauschke-Combettes:HilbertSpaces:2017}, it is easy to deduce that the designed $v^k$ can be computed by
	$$v^k=\mathrm{prox}_{t_kh_1}(t_kE_ku^k-t_kw^k +x^k)-x^k.$$
	\par (\romannumeral2) Since for an arbitrary stepsize $\alpha>0$, $x^k+\alpha v^k$ does not necessarily stays in $\mathcal{M}$, in step 3 of MPGSA we need to bring it back to $\mathcal{M}$ through performing a retraction $\mathrm{T}_{x^k}\mathcal{M}$. Also, an Armijo line-search procedure is incorporated to determine the stepsize $\alpha$ and its well-definedness will be shown later on. In particular, compared to the ManPG method \cite[Algorithm 4.1]{Chen-Ma-Anthony-Zhang:SIOPT:2020}, we see that MPGSA reduces to the ManPG method when $h_2=f=0$, $g$ is a positive constant and $\mathcal{M}$ is the Stiefel manifold. 
	\par In what follows, we conduct convergence analysis for MPGSA. To this end, we introduce the following assumption regarding its initial point.

	\begin{assumption} \label{assumption:section4:compactSetX}
		The level set $\mathcal{X} := \{x\in\mathcal{M}: F(x)\leq F(x^0)\}$ is compact. 
	\end{assumption}
	
	Note that the boundedness of the level set associated with the extended objective is a quite standard assumption, while its closedness automatically holds thanks to the continuity of $F$. Specifically, Assumption~\ref{assumption:section4:compactSetX} is fulfilled by the proposed algorithm with an arbitrary initial point $x^0$ when $\mathcal{M}$ is compact. Next, we present two auxiliary Lemmas that will be frequently used in the convergence analysis. For ease of presentation, we define a set-valued mapping $\Lambda$ on $\mathcal{M}$ as
	\begin{equation} \label{eq:section4:mappingLambda}
		\Lambda(x) := \nabla r(x) + \frac{f(x)\nabla g(x)}{g^2(x)} - \frac{\partial f(x)}{g(x)} - \partial h_2(x), \quad \mbox{for }x\in\mathcal{M}
	\end{equation}
	and a mapping $v:\mathcal{M}\times\mathbb{R}^n\times\mathbb{R}_{+}\to\mathbb{R}^n$ as
	\begin{equation} \label{eq:section4:mappingV}
		v(x,w,t) = \mathop{\arg\min} \limits_{v\in \mathrm{T}_x\mathcal{M}} \left\{ h_1(x+v)+\langle w,v \rangle + \frac{1}{2t}\|v\|^2 \right\}.
	\end{equation}
	
	\begin{algorithm}[h]
		\caption{\ ${\rm MPGSA}$ for solving \eqref{problemn1}}\label{algorithm1}
		\begin{algorithmic}
			\State \vspace{-0.15 cm}
			\begin{description}
				\item[\bf Step 0.] Input $x^0 \in \mathcal{M}$, $\gamma \in (0,1)$, $t_k \in (\underline{t},\overline{t})$ with $0<\underline{t}<\overline{t}$. Set $k \leftarrow 0$. \vspace{0.1 cm}
				\item[\bf Step 1.] Choose
				\[
				y^k \in \partial f(x^k), \quad z^k \in \partial h_2(x^k).
				\]	
				\vspace{0.1 cm}
				\item[\bf Step 2.] Compute
				\begin{equation}\label{eqAlgV}
					\begin{aligned}
						v^k := \mathop{\arg\min}\limits_{v \in \mathrm{T}_{x^k}\mathcal{M}} \left\{ h_1(x^k+v) + \Bigg\langle \nabla r(x^k) + \frac{f(x^k) \nabla g(x^k)}{g^2(x^k)} - \frac{y^k}{g(x^k)} \right.\\
						\left. - z^k, v \Bigg\rangle+ \frac{1}{2 t_k} \|v\|^2 \right\}.
					\end{aligned}	
				\end{equation}
				\vspace{0.1 cm}
				\item[\bf Step 3.] For $m=0,1,\dots,$ do
				\begin{itemize}
					\item Set $\alpha_k = \gamma^m$,
					\item Compute $\widetilde{x}^{k+1} = \mathrm{Retr}_{x^k}(\alpha_k v^k)$,
					\item If $F(\widetilde{x}^{k+1}) \leq F(x^{k}) - \frac{\alpha_k}{2 t_k} \|v^k\|^2$, set $x^{k+1} = \widetilde{x}^{k+1}$ and go to Step 4.
				\end{itemize}
				\vspace{0.1 cm}
				\item[\bf Step 4.] Set $k \leftarrow k+1$ and go to Step 1.
			\end{description}
		\end{algorithmic}
	\end{algorithm}
	
	\begin{lemma}	\label{lem1}
		Suppose that Assumption~\ref{assumption:section4:compactSetX} holds. Let $\Lambda$ and $v$ be defined in \eqref{eq:section4:mappingLambda} and \eqref{eq:section4:mappingV}, respectively. Then there exist three positive constants $M_1$, $M_2$ and $M_3$ such that the following statements hold:
		\indent
		\begin{enumerate}[label = {\upshape(\roman*)}]
			\item \label{lem1-1} $\frac{1}{g(x)} \leq M_1$, for all $x\in\mathcal{X}$; 
			\item \label{lem1-2} $\|w\|\leq M_2$, for all $x\in\mathcal{X}$ and $w\in\Lambda(x)$;
			\item \label{lem1-3} $\|v(x,w,t)\| \leq M_3:=2\overline{t}(L_1+M_2)$, for all $x\in\mathcal{X}$, $t\in[\underline{t},\overline{t}]$ and $w\in\Lambda(x)$.
		\end{enumerate}
	\end{lemma}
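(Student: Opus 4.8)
The plan is to prove the three bounds in order, exploiting compactness of $\mathcal{X}$ together with the structural assumptions (Assumption~\ref{ass1}) and the basic properties of limiting subdifferentials and strongly convex minimization already recalled in Section~2.

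For item~(\romannumeral1), I would argue that $\sqrt{f}/g$ is continuous on $\mathcal{M}$: indeed $\sqrt{f}$ is weakly convex on $\mathbb{R}^n$, hence (locally Lipschitz, therefore) continuous, and $g$ is continuously differentiable and \emph{positive} on $\mathcal{M}$ by Assumption~\ref{ass1}(\romannumeral4). A continuous function on the compact set $\mathcal{X}$ attains a finite maximum, which we call $M_1$. For item~(\romannumeral2), observe that by \eqref{eq:section4:mappingLambda} any $w\in\Lambda(x)$ decomposes as $w=\nabla r(x)+\frac{f(x)}{g^2(x)}\nabla g(x)-\frac{\xi}{g(x)}-\zeta$ with $\xi\in\partial f(x)$ and $\zeta\in\partial h_2(x)$. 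Each of the four terms is bounded uniformly over $x\in\mathcal{X}$: $\nabla r$ and $\nabla g$ are continuous, hence bounded on the compact $\mathcal{X}$; $f/g^2$ is continuous on $\mathcal{M}$ and bounded (the argument from item~(\romannumeral1), squared in the numerator); $1/g$ is bounded below away from zero by continuity and positivity; and $\bigcup_{x\in\mathcal{X}}\partial f(x)$ and $\bigcup_{x\in\mathcal{X}}\partial h_2(x)$ are bounded because $\sqrt{f}$ is weakly convex and $h_2$ is convex — in both cases the union of limiting subdifferentials over a compact set is bounded (using Lemma~\ref{12_8_lemma21_definition}(\romannumeral2) to pass from $\partial\sqrt{f}$ to $\partial f$, together with the already-cited bound on $\sqrt{f(x)}$). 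Collecting a triangle-inequality bound gives $M_2$.

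For item~(\romannumeral3), I would use the first-order optimality condition for the strongly convex problem \eqref{eq:section4:mappingV}. Since $v(x,w,t)$ minimizes $v\mapsto h_1(x+v)+\langle w,v\rangle+\frac{1}{2t}\|v\|^2$ over the subspace $\mathrm{T}_x\mathcal{M}$, comparing the objective value at $v(x,w,t)$ with its value at $0_n$ (which is feasible) yields
\[
h_1(x+v(x,w,t))+\langle w,v(x,w,t)\rangle+\frac{1}{2t}\|v(x,w,t)\|^2 \le h_1(x).
\]
Now bound $h_1(x+v)-h_1(x)\ge -L_1\|v\|$ using the global Lipschitz constant $L_1$ of $h_1$ (Assumption~\ref{ass1}(\romannumeral2)), and $\langle w,v\rangle\ge -\|w\|\,\|v\|\ge -M_2\|v\|$ from item~(\romannumeral2). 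This gives $\frac{1}{2t}\|v\|^2\le (L_1+M_2)\|v\|$, hence $\|v(x,w,t)\|\le 2t(L_1+M_2)\le 2\overline{t}(L_1+M_2)=:M_3$, uniformly in $x\in\mathcal{X}$, $t\in[\underline{t},\overline{t}]$ and $w\in\Lambda(x)$.

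**The main obstacle** I anticipate is the uniform boundedness of the subdifferential sets entering item~(\romannumeral2) — specifically $\bigcup_{x\in\mathcal{X}}\partial f(x)$. Since $f$ itself is only assumed to be such that $\sqrt{f}$ is weakly convex, one cannot directly invoke a convexity-based subdifferential bound for $f$; the route is through the identity $\partial f(x)=2\sqrt{f(x)}\,\partial\sqrt{f}(x)$ of Lemma~\ref{12_8_lemma21_definition}(\romannumeral2), combining the boundedness of $\bigcup_{x\in\mathcal{X}}\partial\sqrt{f}(x)$ (weak convexity on a compact set, via \cite[Proposition 5.4.2]{Bertsekas:Convex:2009}) with the bound on $\sqrt{f(x)}$ on $\mathcal{X}$. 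Everything else is a routine extreme-value-theorem argument on the compact level set $\mathcal{X}$.
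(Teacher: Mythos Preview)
Your proposal is correct and follows essentially the same route as the paper: continuity/compactness for item~(\romannumeral1), the triangle-inequality decomposition of $\Lambda(x)$ together with Lemma~\ref{12_8_lemma21_definition}(\romannumeral2) for item~(\romannumeral2), and the comparison of the objective in \eqref{eq:section4:mappingV} at $v(x,w,t)$ versus $0_n$ combined with the Lipschitz bound on $h_1$ for item~(\romannumeral3), arriving at the identical constant $M_3=2\overline{t}(L_1+M_2)$.
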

	
	\begin{proof}
		Item (i) follows directly from $\inf\{g(x):x\in\mathcal{X}\}>0$ thanks to Assumption~\ref{assumption:section4:compactSetX}, Assumption~\ref{ass1} (\romannumeral3) and (\romannumeral4).
		
		We next prove item (ii). First, $\nabla r(x)+f(x)\nabla g(x)/g^2(x)$ is bounded on $\mathcal{X}$ due to item (i), the local Lipschitz continuity of $f(x)$, $\nabla r(x)$ and $\nabla g(x)$, as well as the compactness of $\mathcal{X}$. Second, since $f$ and $h_2$ are (weakly) convex and real-valued, in view of Assumption~\ref{assumption:section4:compactSetX} we deduce that $\partial f(x)$ and $\partial h_2(x)$ are uniformly bounded for all $x\in\mathcal{X}$. Based on the above discussions and the definition of $\Lambda(x)$ in \eqref{eq:section4:mappingLambda}, we obtain item (\romannumeral2).
		
		Finally we prove item (iii). Given $x\in\mathcal{X}$, $t\in[\underline{t},\overline{t}]$ and choose an arbitrary $w\in\Lambda(x)$, we have from \eqref{eq:section4:mappingV} and $0_n\in\mathrm{T}_x\mathcal{M}$ that 
		\begin{displaymath}
			\begin{aligned}
				h_1(x) &\geq h_1(x+v(x,w,t)) - \langle w,v(x,w,t) \rangle + \frac{1}{2t}\|v(x,w,t)\|^2  \\
				&\geq h_1(x+v(x,w,t)) - \|w\| \|v(x,w,t)\| + \frac{1}{2t} \|v(x,w,t)\|^2.
			\end{aligned}
		\end{displaymath}
		In view of Assumption~\ref{ass1} (ii) and item (ii) of this lemma, we further obtain from the above inequality that
		\begin{displaymath}
			\begin{aligned}
				\|v(x,w,t)\|^2 &\leq 2t\left( h_1(x)-h_1(x+v(x,w,t)) + \|w\| \|v(x,w,t)\| \right)\\
				&\leq 2t \left( L_1 \|v(x,w,t)\| + M_2 \|v(x,w,t)\| \right) \\
				&\leq 2\overline{t}\left(L_1+M_2\right) \|v(x,w,t)\|,
			\end{aligned}
		\end{displaymath}
		which indicates item (iii).
	\end{proof}
	
	Next, we show that the line-search procedure adopted in Step 3 of MPGSA is well-defined, i.e., it must terminate after a finite number of iterations.
	\begin{proposition} \label{proposition:FiniteTforlinesearch}
		Suppose that Assumption~\ref{assumption:section4:compactSetX} holds. Then there exists a constant $\overline{\alpha}\in (0,1]$ such that Step 3 of MPGSA terminates at some $\alpha_k>\overline{\alpha}\gamma$ for all $k\in\mathbb{N}$ in at most $\left\lceil \frac{\log \overline{\alpha}}{\log \gamma} \right\rceil + 1$ iterations.
	\end{proposition}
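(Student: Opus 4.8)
The plan is to derive, uniformly in $k$, a sufficient-decrease estimate of the shape
\[
F(\mathrm{Retr}_{x^k}(\alpha v^k)) \;\le\; F(x^k) + \alpha\bigl(h_1(x^k+v^k)-h_1(x^k)+\langle w^k,v^k\rangle\bigr) + C\,\alpha^2\|v^k\|^2
\]
for every $k\in\mathbb{N}$ and every $\alpha\in(0,\alpha_0]$, where $w^k:=\nabla r(x^k)+\frac{f(x^k)\nabla g(x^k)}{g^2(x^k)}-\frac{y^k}{g(x^k)}-z^k$ is the linear coefficient appearing in \eqref{eqAlgV}, and $\alpha_0\in(0,1]$, $C>0$ are constants independent of $k$. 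The whole argument is carried out under the simultaneous induction hypothesis that $x^0,\dots,x^k\in\mathcal{X}$, which holds at $k=0$ by Assumption~\ref{assumption:section4:compactSetX} and is propagated because any accepted $x^{k+1}$ lies on $\mathcal{M}$ and satisfies $F(x^{k+1})\le F(x^k)\le F(x^0)$. Under this hypothesis, Lemma~\ref{lem1} gives $\|w^k\|\le M_2$ and $\|v^k\|\le M_3$ (observe $v^k=v(x^k,w^k,t_k)$ with $w^k\in\Lambda(x^k)$, cf. \eqref{eq:section4:mappingLambda}--\eqref{eq:section4:mappingV}), which lets me apply Lemma~\ref{lemma:Retr_property} on the compact set $\mathcal{X}$ with radius $\delta:=M_3$ and thereby control every retraction error uniformly; moreover, by the reasoning in the proof of Lemma~\ref{lem1}, the quantities $f(x^k),\ \sqrt{f(x^k)},\ \nabla r(x^k),\ \nabla g(x^k),\ y^k,\ z^k$ are bounded uniformly in $k$, and $\underline{g}:=\min_{\mathcal{X}}g>0$ by Assumption~\ref{ass1}(iii)--(iv).

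To prove the displayed estimate I would split $F=h_1-h_2-\tfrac{f}{g}+r$ and bound each piece at $p:=\mathrm{Retr}_{x^k}(\alpha v^k)$, using that $\|\alpha v^k\|\le M_3=\delta$ whenever $\alpha\le 1$. For $h_1$: by \eqref{ieq:Retr_property_item2_3} and then convexity of $h_1$, $h_1(p)\le h_1(x^k+\alpha v^k)+C\alpha^2\|v^k\|^2\le(1-\alpha)h_1(x^k)+\alpha h_1(x^k+v^k)+C\alpha^2\|v^k\|^2$. For $-h_2$: since $h_2$ is convex, hence locally Lipschitz, \eqref{ieq:Retr_property_item2_3} together with the subgradient inequality at $z^k\in\partial h_2(x^k)$ yields $-h_2(p)\le -h_2(x^k)-\alpha\langle z^k,v^k\rangle+C\alpha^2\|v^k\|^2$. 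For $r$: the second estimate in \eqref{ieq:Retr_property_item2_4} gives $r(p)\le r(x^k)+\alpha\langle\nabla r(x^k),v^k\rangle+C\alpha^2\|v^k\|^2$. Summing these three with the $-f/g$ bound discussed next, and absorbing all second-order terms into a single uniform constant, produces the displayed inequality.

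The delicate term — and the step I expect to be the main obstacle — is $-f/g$, for which I need the lower bound
\[
-\frac{f(p)}{g(p)} \;\le\; -\frac{f(x^k)}{g(x^k)} + \alpha\Bigl\langle \frac{f(x^k)\nabla g(x^k)}{g^2(x^k)}-\frac{y^k}{g(x^k)},\; v^k\Bigr\rangle + C\,\alpha^2\|v^k\|^2 .
\]
First I would exploit the weak convexity of $\sqrt{f}$ with modulus $\tau$, the sign constraint $\sqrt{f}\ge0$, and the identity $\partial f(x)=2\sqrt{f(x)}\partial\sqrt{f}(x)$ from Lemma~\ref{12_8_lemma21_definition} (so that $y^k=2\sqrt{f(x^k)}\,s^k$ for some $s^k\in\partial\sqrt{f}(x^k)$) to obtain, via a short case distinction on the sign of $\sqrt{f(x^k)}+\langle s^k,p-x^k\rangle-\tfrac{\tau}{2}\|p-x^k\|^2$, the Euclidean estimate $f(p)\ge f(x^k)+\langle y^k,p-x^k\rangle-\tau\sqrt{f(x^k)}\|p-x^k\|^2$. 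Combining this with the first-order expansion of $g$ at $x^k$ (from \eqref{ieq:Retr_property_item2_4} applied to $g$), the retraction bounds $\|p-x^k\|\le W_1\alpha\|v^k\|$ and $\|p-x^k-\alpha v^k\|\le W_2\alpha^2\|v^k\|^2$ from \eqref{ieq:Retr_property1}--\eqref{ieq:Retr_property2}, the uniform bounds listed above, and the estimate $g(p)\ge\underline{g}/2$ valid once $\alpha$ is below a $k$-independent threshold $\alpha_0$, one writes $-\tfrac{f(p)}{g(p)}+\tfrac{f(x^k)}{g(x^k)}=\tfrac{f(x^k)g(p)-f(p)g(x^k)}{g(x^k)g(p)}$, expands numerator and denominator to first order in $\alpha$, replaces the factor $\tfrac{1}{g(x^k)g(p)}$ by $\tfrac{1}{g^2(x^k)}$ up to an $O(\alpha\|v^k\|)$ error, and collects everything that remains into $C\alpha^2\|v^k\|^2$ with $C$ uniform.

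With the displayed sufficient-decrease estimate in hand, testing the $\tfrac{1}{t_k}$-strongly convex subproblem \eqref{eqAlgV} at $v=0$ gives $h_1(x^k+v^k)+\langle w^k,v^k\rangle+\tfrac{1}{2t_k}\|v^k\|^2\le h_1(x^k)-\tfrac{1}{2t_k}\|v^k\|^2$, hence $h_1(x^k+v^k)-h_1(x^k)+\langle w^k,v^k\rangle\le-\tfrac{1}{t_k}\|v^k\|^2$, so that $F(p)\le F(x^k)-\tfrac{\alpha}{t_k}\|v^k\|^2+C\alpha^2\|v^k\|^2$. The Armijo test $F(p)\le F(x^k)-\tfrac{\alpha}{2t_k}\|v^k\|^2$ then holds as soon as $C\alpha\le\tfrac{1}{2t_k}$, for which $\alpha\le\tfrac{1}{2C\overline{t}}$ suffices (using $t_k<\overline{t}$). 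Setting $\overline{\alpha}:=\min\bigl\{\alpha_0,\ \tfrac{1}{2\max\{C,1\}\,\overline{t}}\bigr\}\in(0,1]$, every $\alpha\in(0,\overline{\alpha}]$ is accepted at iteration $k$, with $\overline{\alpha}$ independent of $k$. Finally, since Step~3 tries $\alpha_k=\gamma^m$ for $m=0,1,\dots$ with $\gamma\in(0,1)$, the first index $m$ with $\gamma^m\le\overline{\alpha}$ is $\overline{m}:=\lceil\log\overline{\alpha}/\log\gamma\rceil$, so the procedure stops after at most $\overline{m}+1=\lceil\log\overline{\alpha}/\log\gamma\rceil+1$ trials with an accepted stepsize $\alpha_k=\gamma^m\ge\gamma^{\overline{m}}>\gamma\overline{\alpha}$ — the last inequality because $\overline{m}-1<\log\overline{\alpha}/\log\gamma$ forces $\gamma^{\overline{m}-1}>\overline{\alpha}$, and trivially $\alpha_k=1>\gamma\overline{\alpha}$ when $\overline{m}=0$. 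This also closes the induction, since the accepted $x^{k+1}$ obeys $F(x^{k+1})\le F(x^k)$, hence $x^{k+1}\in\mathcal{X}$.
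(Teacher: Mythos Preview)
Your argument is correct and complete; it differs from the paper's proof chiefly in how you handle the fractional term $-f/g$. The paper never expands $-f(p)/g(p)$ directly. Instead it uses the Dinkelbach-type quadratic minorization
\[
-\frac{f(p)}{g(p)}\;\le\;c_k^2\,g(p)\;-\;2c_k\sqrt{f(p)},\qquad c_k:=\frac{\sqrt{f(x^k)}}{g(x^k)},
\]
and then bounds $g(p)$ from above via \eqref{ieq:Retr_property_item2_4} and $\sqrt{f(p)}$ from below via the weak-convexity inequality for $\sqrt{f}$ together with \eqref{ieq:Retr_property_item2_3}. This yields precisely the linear term $\bigl\langle \tfrac{f(x^k)\nabla g(x^k)}{g^2(x^k)}-\tfrac{y^k}{g(x^k)},\,\alpha v^k\bigr\rangle$ plus a uniform $O(\alpha^2\|v^k\|^2)$ remainder, without ever dividing by $g(p)$. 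By contrast, you square the weak-convexity inequality (with a case split) to obtain a lower bound on $f(p)$, and then perform a quotient-rule expansion of $-f(p)/g(p)$, which forces you to introduce the auxiliary threshold $\alpha_0$ ensuring $g(p)\ge\underline{g}/2$ and to argue that replacing $1/(g(x^k)g(p))$ by $1/g^2(x^k)$ costs only $O(\alpha^2\|v^k\|^2)$. Both routes reach the same sufficient-decrease inequality; the paper's is a little cleaner (no $\alpha_0$, no division by $g(p)$) and also foreshadows the auxiliary variable $c$ used in the sequential-convergence analysis of Section~6, whereas yours is more elementary and self-contained.
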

	\begin{proof}
		Thanks to $x^0\in\mathcal{X}$ and the stopping criterion in Step 3, it suffices to show the desired result by assuming $x^k\in\mathcal{X}$. Let $y^k\in \partial f(x^k)$, $z^k\in\partial h_2(x^k)$. Invoking \eqref{eq:section4:mappingV} and \eqref{eqAlgV}, it holds that $v^k = v(x^k,w^k,t^k)$. It follows from this and Lemma~\ref{lem1} (iii) that $v^k\in \mathrm{T}_{x^k}\mathcal{M} \cap \overline{\mathcal{B}}(0_n,M_3)$, where $M_3>0$ is defined in Lemma~\ref{lem1} (iii). In view of Lemma~\ref{lemma:Retr_property} (ii), the compactness of $\mathcal{X}$ and $\alpha_k v^k\in\mathrm{T}_{x^k}\mathcal{M} \cap \overline{\mathcal{B}}(0_n,M_3)$, as well as Assumption~\ref{ass1} (ii)-(iv), we deduce that there exists $L_{h_1}>0$, $L_{h_2}>0$, $L_r>0$, $L_{\nabla r}$, $L_g>0$, $L_{\nabla g}$, $L_{f}>0$ and $M_{\nabla g}$  such that for any $\alpha_k \in (0,1]$, the following inequalities hold
		\begin{align}
			\label{proof:proposition:FiniteTforlinesearch:eqnew}
			\|\nabla g(x^k)\| &\leq M_{\nabla g}, \\
			\label{proof:proposition:FiniteTforlinesearch:eq44} h_1(\tilde{x}^{k+1}) &\leq h_1(x^k+\alpha_kv^k) + L_{h_1}\|\alpha_kv^k\|^2,	\\
			\label{proof:proposition:FiniteTforlinesearch:eq45} h_2(x^k+\alpha_kv^k) &\leq h_2(\tilde{x}^{k+1}) + L_{h_2}\|\alpha_kv^k\|^2,	\\
			\label{proof:proposition:FiniteTforlinesearch:eq46} r(\tilde{x}^{k+1}) &\leq r(x^k) + \langle \nabla r(x^k),\alpha_k v^k \rangle + \frac{L_{\nabla r}+2L_r}{2}\|\alpha_kv^k\|^2,	\\
			\label{proof:proposition:FiniteTforlinesearch:eq47} |g(\tilde{x}^{k+1})-g(x^k) - \langle \nabla g(x^k),\alpha_k v^k \rangle | &\leq \frac{L_{\nabla g}+2L_g}{2} \|\alpha_kv^k\|^2,	\\
			\label{proof:proposition:FiniteTforlinesearch:eq48} f(x^k+\alpha_k v^k) &\leq f(\tilde{x}^{k+1}) + L_{f}\|\alpha_k v^k\|^2, 
		\end{align}
		where $\tilde{x}^{k+1}=\mathrm{Retr}_{x^k}(\alpha_k v^k)$. In view of Lemma~\ref{lemma:Retr_property} (i), $\alpha_k v^k\in\mathrm{T}_{x^k}\mathcal{M} \cap \overline{\mathcal{B}}(0_n,M_3)$ and $x^k\in \mathcal{X}$, we have $\tilde{x}^{k+1}\in \mathcal{X}+W_1 \overline{\mathcal{B}}(0_n,M_3)$. This together with the compactness of $\mathcal{X}$, Assumption~\ref{ass1} (iii) and (\romannumeral4) yields that there exist $\widetilde{M_f}$, $\widetilde{M_g}$, $\widetilde{L_{1/g}}$ and $\widetilde{L_f}$ that
		\begin{align}
			\label{proof:proposition:FiniteTforlinesearch:eqnew0}
			\left|f(\tilde{x}^{k+1})\right|\leq \widetilde{M_f},\  g(\tilde{x}^{k+1}) &\leq \widetilde{M_g},\, \\
			\label{proof:proposition:FiniteTforlinesearch:eqnew1}
			\left|\frac{1}{g(\tilde{x}^{k+1})} - \frac{1}{g(x^k)}\right| &\leq \widetilde{L_{1/g}}\|\alpha_k v^k\|,\\
			\label{proof:proposition:FiniteTforlinesearch:eqnew2}
			|f(\tilde{x}^{k+1}) - f(x^k)| &\leq \widetilde{L_f}\|\alpha_k v^k\|.
		\end{align}
		On the other hand, since the objective function in \eqref{eqAlgV} is strongly convex with modulus $\frac{1}{t_k}$ and $0_n\in\mathrm{T}_{x^k}\mathcal{M}$, we have from \eqref{eqAlgV} that
		\begin{equation}\label{proof:proposition:FiniteTforlinesearch:eq49}
			h_1(x^k+v^k) + \langle w^k,v^k \rangle + \frac{1}{t_k}\|v^k\|^2 \leq h_1(x^k),
		\end{equation}
		for all $w^k\in \Lambda(x^k)$.
		Also, it follows from the convexity of $h_1$ and $\alpha_k\in (0,1]$ that
		\begin{equation} \label{proof:proposition:FiniteTforlinesearch:eq410}
			h_1(x^k+\alpha_k v^k) \leq \alpha_k h_1(x^k+v^k) + (1-\alpha_k)h_1(x^k).
		\end{equation}
		Combining \eqref{proof:proposition:FiniteTforlinesearch:eq44} with \eqref{proof:proposition:FiniteTforlinesearch:eq49} and \eqref{proof:proposition:FiniteTforlinesearch:eq410}, we obtain that
		\begin{equation} \label{proof:proposition:FiniteTforlinesearch:eq411}
			h_1(\tilde{x}^{k+1}) \leq h_1(x^k) - \langle w^k,\alpha_k v^k \rangle - \left(\frac{1}{\alpha_k t_k} - L_{h_1}\right) \|\alpha_k v^k\|^2.
		\end{equation}
		The convexity of $h_2$ yields that 
		\begin{displaymath}
			h_2(x^k) + \langle z^k,\alpha_k v^k \rangle \leq h_2(x^k+\alpha_k v^k).
		\end{displaymath}
		Invoking this and \eqref{proof:proposition:FiniteTforlinesearch:eq45}, we see that
		\begin{equation} \label{proof:proposition:FiniteTforlinesearch:eq412}
			h_2(x^k) + \langle z^k,\alpha_k v^k \rangle \leq h_2(\tilde{x}^{k+1}) + L_{h_2} \|\alpha_k v^k\|^2.
		\end{equation}
		Moreover, using \eqref{proof:proposition:FiniteTforlinesearch:eqnew0}, \eqref{proof:proposition:FiniteTforlinesearch:eq47} and Lemma~\ref{lem1} (i) , we obtain further that
		\begin{equation}\label{proof:proposition:FiniteTforlinesearch:eq413}
			\begin{aligned}
				-2\frac{f(\tilde{x}^{k+1})}{g(x^k)} + \frac{f(\tilde{x}^{k+1})}{g^2(x^k)}g(\tilde{x}^{k+1}) &\leq -\frac{f(\tilde{x}^{k+1})}{g(x^k)} + \frac{f(\tilde{x}^{k+1})}{g^2(x^k)}\left(g(\tilde{x}^{k+1})-g(x^k) - \langle \nabla g(x^k),\alpha_k v^k\rangle\right) \\ &+\left\langle \frac{f(\tilde{x}^{k+1})}{g^2(x^k)}\nabla g(x^k),\alpha_k v^k \right\rangle \\ &\leq -\frac{f(\tilde{x}^{k+1})}{g(x^k)} + \left\langle \frac{f(\tilde{x}^{k+1})}{g^2(x^k)}\nabla g(x^k),\alpha_k v^k \right\rangle  \\ &+ \frac{M_1^2 \widetilde{M_f} (L_{\nabla g}+2L_g)}{2} \|\alpha_k v^k\|^2.
			\end{aligned}
		\end{equation}	
		Then we see that
		\begin{equation}\label{proof:proposition:FiniteTforlinesearch:eqnew3}
			\begin{aligned}
				\left\langle \frac{f(\tilde{x}^{k+1})}{g^2(x^k)}\nabla g(x^k),\alpha_k v^k \right\rangle &= \left\langle \frac{f(x^k)}{g^2(x^k)}\nabla g(x^k),\alpha_k v^k \right\rangle + \frac{f(\tilde{x}^{k+1}) - f(x^k)}{g^2(x^k)}\left\langle \nabla g(x^k),\alpha_k v^k \right\rangle \\ &\leq \left\langle \frac{f(x^k)}{g^2(x^k)}\nabla g(x^k),\alpha_k v^k \right\rangle + M_1^2\widetilde{L_f}M_{\nabla g}\|\alpha_k v^k\|^2,
			\end{aligned}
		\end{equation}
		where the inequality follows from \eqref{proof:proposition:FiniteTforlinesearch:eqnew}, \eqref{proof:proposition:FiniteTforlinesearch:eqnew0}, \eqref{proof:proposition:FiniteTforlinesearch:eqnew2} and Lemma~\ref{lem1} (i) . Then Assumption~\ref{ass1} (iv) leads to
		\begin{displaymath}
			\begin{aligned}
				f(x^k) + \langle y^k, \alpha_k v^k\rangle &\leq f(x^k + \alpha_k v^k) + \frac{\tau}{2}\|\alpha_k v^k\|^2\\ &\leq f(\tilde{x}^{k+1}) + \left(\frac{\tau}{2}+L_f\right)\|\alpha_k v^k\|^2,
			\end{aligned}
		\end{displaymath}
		where the last inequality follows from \eqref{proof:proposition:FiniteTforlinesearch:eq48}. Using this and Lemma~\ref{lem1} (i), we have that
		\begin{equation} \label{proof:proposition:FiniteTforlinesearch:eq414}
			-\frac{f(\tilde{x}^{k+1})}{g(x^k)} \leq -\frac{f(x^k)}{g(x^k)} - \langle \frac{y^k}{g(x^k)}, \alpha_k v^k \rangle + M_1\left(\frac{\tau}{2}+L_f\right)\|\alpha_k v^k\|^2.
		\end{equation}
		One can directly check that
		\begin{equation} \label{proof:proposition:FiniteTforlinesearch:eqnew4}
			\begin{aligned}
				-\frac{f(\tilde{x}^{k+1})}{g(\tilde{x}^{k+1})} &= \frac{f(\tilde{x}^{k+1})}{g^2(\tilde{x}^{k+1})} g(\tilde{x}^{k+1}) - 2\frac{f(\tilde{x}^{k+1})}{g(\tilde{x}^{k+1})} \\
				&= -2\frac{f(\tilde{x}^{k+1})}{g(x^k)} + \frac{f(\tilde{x}^{k+1})}{g^2(x^k)}g(\tilde{x}^{k+1}) -f(\tilde{x}^{k+1})g(\tilde{x}^{k+1})\left(\frac{1}{g(\tilde{x}^{k+1})} - \frac{1}{g(x^k)}\right)^2\\ &\leq -2\frac{f(\tilde{x}^{k+1})}{g(x^k)} + \frac{f(\tilde{x}^{k+1})}{g^2(x^k)}g(\tilde{x}^{k+1}) + \widetilde{M_f}\widetilde{M_g}\widetilde{L_{1/g}}^2\|\alpha_k v^k\|^2,
			\end{aligned}
		\end{equation}
		where the inequality follows from \eqref{proof:proposition:FiniteTforlinesearch:eqnew0} and \eqref{proof:proposition:FiniteTforlinesearch:eqnew1}. Now, by summing \eqref{proof:proposition:FiniteTforlinesearch:eq46}, \eqref{proof:proposition:FiniteTforlinesearch:eq411}-\eqref{proof:proposition:FiniteTforlinesearch:eqnew4} and rearranging terms, we obtain that
		\begin{equation} \label{proof:proposition:FiniteTforlinesearch:eq415}
			\begin{aligned}
				F(\tilde{x}^{k+1}) \leq F(x^k) - \left( \frac{1}{\alpha_k t_k}-c \right) \|\alpha_k v^k\|^2,
			\end{aligned}
		\end{equation}
		where 
		\begin{equation}  \label{eq:definition_c}
			c:= L_{h_1}+L_{h_2}+\frac{L_{\nabla r}+2L_r}{2}+\frac{M_1^2\widetilde{M_f} (L_{\nabla g}+2L_g)}{2} + M_1^2\widetilde{L_f}M_{\nabla g} + M_1L_f +\frac{\tau M_1}{2} + \widetilde{M_f}\widetilde{M_g}\widetilde{L_{1/g}}^2.
		\end{equation}
		Upon setting $\overline{\alpha} = \min \left\{ \frac{1}{2c \overline{t}},1 \right\}$, we conclude that Step 3 of MPGSA and $t_k\in [ \underline{t},\overline{t} ]$ that it must terminate at some $\alpha_k>\overline{\alpha}\gamma$. This completes the proof.
	\end{proof}
	
	From the proof of Proposition~\ref{proposition:FiniteTforlinesearch}, we deduce the following corollary immediately.
	\begin{corollary} \label{corollary:FiniteTforlinesearch}
		Suppose that Assumption~\ref{assumption:section4:compactSetX} holds. If $\overline{t}<\frac{1}{2c}$ with $c$ being defined in \eqref{eq:definition_c}, then Step 3 of MPGSA terminates at $m=0$.
	\end{corollary}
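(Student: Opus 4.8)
The plan is to reuse the key descent estimate that was already established inside the proof of Proposition~\ref{proposition:FiniteTforlinesearch}, rather than to redo any line-search analysis from scratch. Recall that in that proof, after combining \eqref{proof:proposition:FiniteTforlinesearch:eq46} and \eqref{proof:proposition:FiniteTforlinesearch:eq411}--\eqref{proof:proposition:FiniteTforlinesearch:eq414}, rearranging, and using the elementary bound on the fractional term, one arrives at
\[
F(\widetilde{x}^{k+1}) \leq F(x^k) - \left( \frac{1}{\alpha_k t_k} - c \right)\|\alpha_k v^k\|^2
\]
for \emph{every} $\alpha_k \in (0,1]$, where $c$ is the constant in \eqref{eq:definition_c}. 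This estimate relies on $x^k\in\mathcal{X}$ (which holds by the same induction argument already invoked in that proof via the stopping criterion) and on $v^k\in\mathrm{T}_{x^k}\mathcal{M}\cap\overline{\mathcal{B}}(0_n,M_3)$ from Lemma~\ref{lem1}(iii), so that Lemma~\ref{lemma:Retr_property}(ii) applies uniformly over $\mathcal{X}$.

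First I would specialize this inequality to the first trial, $m=0$, i.e.\ $\alpha_k=\gamma^0=1$, obtaining
\[
F(\widetilde{x}^{k+1}) \leq F(x^k) - \left( \frac{1}{t_k} - c \right)\|v^k\|^2 .
\]
Next I would exploit the hypothesis $\overline{t}<\frac{1}{2c}$ together with $t_k\in(\underline{t},\overline{t})$ to conclude $t_k<\frac{1}{2c}$, hence $c<\frac{1}{2t_k}$, equivalently $\frac{1}{t_k}-c>\frac{1}{2t_k}$. Substituting this into the previous display yields
\[
F(\widetilde{x}^{k+1}) \leq F(x^k) - \frac{1}{2t_k}\|v^k\|^2 = F(x^k) - \frac{\alpha_k}{2t_k}\|v^k\|^2
\]
with $\alpha_k=1$, which is precisely the acceptance test in Step~3 of MPGSA at $m=0$. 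Hence the line search accepts the trial point immediately, which is the assertion of the corollary.

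I do not anticipate a real obstacle: essentially all the work is already contained in the displayed estimate carried over from the proof of Proposition~\ref{proposition:FiniteTforlinesearch}, and the only new ingredient is the trivial observation that $\overline{t}<\frac{1}{2c}$ forces $\frac{1}{t_k}-c>\frac{1}{2t_k}$. The single point deserving a sentence of care is to state explicitly that the cited estimate is valid for all $\alpha_k\in(0,1]$, in particular for $\alpha_k=1$, and that the standing prerequisites behind it ($x^k\in\mathcal{X}$ and $\|v^k\|\leq M_3$) are in force, so that no circularity with the well-definedness statement of Proposition~\ref{proposition:FiniteTforlinesearch} is incurred.
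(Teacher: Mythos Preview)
Your proposal is correct and is exactly the argument the paper intends: the corollary is stated as an immediate consequence of the proof of Proposition~\ref{proposition:FiniteTforlinesearch}, and you have simply written out the observation that $\overline{t}<\frac{1}{2c}$ forces $\overline{\alpha}=\min\{\frac{1}{2c\overline{t}},1\}=1$, so the acceptance test passes at $\alpha_k=1$. There is nothing to add or correct.
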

	
	Now we are ready to establish the main convergence results of MPGSA for solving problem \eqref{problemn1}.
	\begin{theorem} \label{theorem:FunctionConvergence}
		Suppose that Assumption~\ref{assumption:section4:compactSetX} holds. Let $\{x^k:k\in\mathbb{N}\}$ and $\{v^k:k\in\mathbb{N}\}$ be generated by MPGSA. Then the following statements hold.
		\begin{enumerate}[label = {\upshape(\roman*)}]
			\item $F_{*} := \lim\limits_{k\to\infty}F(x^k)$ exists and for all $k\in\mathbb{N}$,
			\begin{equation} \label{theorem:FunctionConvergence:item1}
				F(x^{k+1}) \leq F(x^k) - \frac{\overline{\alpha}\gamma}{2t_k}\|v^k\|^2,
			\end{equation}
			where $\overline{\alpha}$ is defined in Proposition~\ref{proposition:FiniteTforlinesearch}.
			\item $\lim\limits_{k\to\infty}\|v^k\| = 0$.
			\item $\{x^k:k\in\mathbb{N}\} \subseteq \mathcal{X}$ is bounded and any accumulation point of $\{x^k:k\in\mathbb{N}\}$ is a critical point of problem \eqref{problemn1}.
		\end{enumerate}
	\end{theorem}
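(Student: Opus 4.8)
The plan is to prove the three items in order: first extract the descent estimate \eqref{theorem:FunctionConvergence:item1} from the line-search, then telescope it to get square-summability of $\{v^k\}$, and finally pass to the limit in the first-order optimality condition of the subproblem \eqref{eqAlgV}. For item (i), I would argue by induction that $x^k\in\mathcal{X}$ for every $k$: Proposition~\ref{proposition:FiniteTforlinesearch} ensures the line-search in Step~3 terminates with $\alpha_k>\overline{\alpha}\gamma$, so the stopping test gives
\[
F(x^{k+1})\le F(x^k)-\frac{\alpha_k}{2t_k}\|v^k\|^2\le F(x^k)-\frac{\overline{\alpha}\gamma}{2t_k}\|v^k\|^2\le F(x^k),
\]
hence $F(x^k)\le F(x^0)$ and $x^k\in\mathcal{X}$; this also proves \eqref{theorem:FunctionConvergence:item1}. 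Since $F$ is continuous and $\mathcal{X}$ is compact by Assumption~\ref{assumption:section4:compactSetX}, $F$ is bounded below on $\mathcal{X}$, so the nonincreasing sequence $\{F(x^k)\}$ converges to some $F_*$.

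For item (ii), I would sum \eqref{theorem:FunctionConvergence:item1} over $k$ and use $t_k<\overline{t}$ to obtain
\[
\frac{\overline{\alpha}\gamma}{2\overline{t}}\sum_{k=0}^{\infty}\|v^k\|^2\le\sum_{k=0}^{\infty}\frac{\overline{\alpha}\gamma}{2t_k}\|v^k\|^2\le F(x^0)-F_*<\infty,
\]
which forces $\|v^k\|\to0$.

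For item (iii), boundedness of $\{x^k\}$ is immediate from $x^k\in\mathcal{X}$ and the compactness of $\mathcal{X}$. Let $x^*$ be an accumulation point, say $x^{k_j}\to x^*$. Writing $w^k:=\nabla r(x^k)+\frac{f(x^k)\nabla g(x^k)}{g^2(x^k)}-\frac{y^k}{g(x^k)}-z^k\in\Lambda(x^k)$, the iterate $v^k=v(x^k,w^k,t_k)$ minimizes the strongly convex objective in \eqref{eqAlgV} over the subspace $\mathrm{T}_{x^k}\mathcal{M}$, whose normal cone is $\mathrm{N}_{x^k}\mathcal{M}$; applying Fermat's rule together with the convex sum rule (valid since $h_1$ is finite-valued convex, the constraint is a subspace, and the remaining terms are smooth) gives
\[
0_n\in\partial h_1(x^k+v^k)+w^k+\tfrac{1}{t_k}v^k+\mathrm{N}_{x^k}\mathcal{M},
\]
so there exist $\xi^k\in\partial h_1(x^k+v^k)$ and $n^k\in\mathrm{N}_{x^k}\mathcal{M}$ with $0_n=\xi^k+w^k+\tfrac{1}{t_k}v^k+n^k$. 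I would then pass to the limit along $\{k_j\}$ term by term: by item (ii), $v^{k_j}\to0_n$, hence $x^{k_j}+v^{k_j}\to x^*$ and $\tfrac{1}{t_{k_j}}v^{k_j}\to0_n$ since $t_{k_j}\ge\underline{t}$; the sequences $\xi^{k_j}$ (bounded by $L_1$), $y^{k_j}\in\partial f(x^{k_j})$ and $z^{k_j}\in\partial h_2(x^{k_j})$ are bounded by Lemma~\ref{lem1}, Lemma~\ref{12_8_lemma21_definition}(ii) and the local boundedness of the subdifferentials of the (weakly) convex functions $\sqrt f$ and $h_2$ on the compact set $\mathcal{X}$, so after a further subsequence they converge, with limits in $\partial h_1(x^*)$, $\partial f(x^*)$, $\partial h_2(x^*)$ respectively by outer semicontinuity of the limiting subdifferential (using $f(x^{k_j})\to f(x^*)$ for the $\partial f$ case); continuity of $\nabla r,\nabla g,f,g$ and positivity of $g$ on $\mathcal{M}$ give $w^{k_j}\to w^*$ with the obvious limiting expression; finally $n^{k_j}=-\xi^{k_j}-w^{k_j}-\tfrac{1}{t_{k_j}}v^{k_j}$ converges and its limit lies in $\mathrm{N}_{x^*}\mathcal{M}$ because the normal-cone mapping $x\mapsto\mathrm{N}_x\mathcal{M}$ of a smooth embedded submanifold is outer semicontinuous. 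Collecting these limits yields exactly \eqref{definition:critical_point}, so $x^*$ is a critical point of problem \eqref{problemn1}.

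The main obstacle I expect is the bookkeeping in item (iii): writing down the subproblem optimality inclusion with the correct $\mathrm{N}_{x^k}\mathcal{M}$ term, verifying all the uniform boundedness claims (in particular boundedness of $\partial f(x^k)$, which goes through Lemma~\ref{12_8_lemma21_definition}(ii) and the weak convexity of $\sqrt f$), and justifying that the limit of $n^k$ remains normal to $\mathcal{M}$ at $x^*$, i.e., the closedness of the normal bundle of the embedded submanifold. The remainder — telescoping and continuity — is routine.
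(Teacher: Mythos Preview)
Your proposal is correct and follows essentially the same route as the paper's proof: descent from the line-search termination in Proposition~\ref{proposition:FiniteTforlinesearch}, convergence of $F(x^k)$ and $\|v^k\|\to 0$ from the resulting sufficient-decrease inequality, and then passing to the limit in the first-order optimality inclusion of \eqref{eqAlgV} using outer semicontinuity of the subdifferentials and of $x\mapsto \mathrm{N}_x\mathcal{M}$. The only cosmetic differences are that you justify boundedness below of $F$ via compactness of $\mathcal{X}$ (the paper instead invokes existence of optimal solutions) and that you telescope to obtain square-summability of $\{\|v^k\|\}$ rather than just taking the limit in \eqref{theorem:FunctionConvergence:item1}; both are fine and your versions are arguably cleaner.
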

	
	\begin{proof}
		First we prove item (i). In view of Step 3 of MPGSA and Proposition~\ref{proposition:FiniteTforlinesearch}, we see that $\{F(x^k:k\in\mathbb{N})\}$ is nonincreasing. This together with the existence of optimal solutions to problem \eqref{problemn1} indicates that $F_{*}=\lim\limits_{k\to\infty}F(x^k)$ exists. Since the line-search procedure of MPGSA terminates at some $\alpha_k>\overline{\alpha}\gamma$ for all $k\in\mathbb{N}$ from Proposition~\ref{proposition:FiniteTforlinesearch}, we immediately obtain \eqref{theorem:FunctionConvergence:item1}.
		
		Next we show item (ii). Invoking $t_k\in[\underline{t},\overline{t}]$, \eqref{theorem:FunctionConvergence:item1} yields that for all $k\in\mathbb{N}$
		\begin{displaymath}
			F(x^{k+1}) \leq F(x^k) - \frac{2\overline{\alpha}\gamma}{\overline{t}} \|v^k\|^2.
		\end{displaymath}
		Since $\lim\limits_{k\to\infty}F(x^k)$ exists from item (i), by passing to limit in the above inequality we get item (ii).
		
		Finally, we prove item (iii). The boundedness of $\{x^k:k\in\mathbb{N}\} \subseteq \mathcal{X}$ follows directly from Assumption~\ref{assumption:section4:compactSetX} and the fact that $\{F(x^k:k\in\mathbb{N})\}$ is nonincreasing. Let $x^{*}$ be an accumulation point of $\{x^k:k\in\mathbb{N}\}$ and let $\{x^{k_j}:j\in\mathbb{N}\}$ be a subsequence such that $\lim\limits_{j\to\infty}x^{k_j}=x^{*}$. In view of the first-order optimality of \eqref{eqAlgV}, there exists $\xi^{k_j}\in\partial h_1(x^{k_j}+v^{k_j})$, such that
		\begin{equation} \label{proof:theorem:FunctionConvergence:item3:starstar}
			0_n \in \xi^{k_j} + \nabla r(x^{k_j}) + \frac{f(x^{k_j}) \nabla g(x^{k_j})}{g^2(x^{k_j})} - \frac{y^{k_j}}{g(x^{k_j})} - z^{k_j} + \frac{v^{k_j}}{t_{k_j}} + \mathrm{N}_{x^{k_j}}\mathcal{M},
		\end{equation}
		where $y^{k_j} \in \partial f(x^{k_j})$ and $z^{k_j} \in \partial h_2(x^{k_j})$. Note that $\{\xi^{k_j}:k\in\mathbb{N}\}$ is bounded due to the continuity and convexity of $h_1$ as well as the boundedness of $\{x^{k_j}:j\in\mathbb{N}\}$. Similarly, we claim that $\{z^{k_j}:j\in\mathbb{N}\}$ is also bounded. Furthermore, $\{y^{k_j}:j\in\mathbb{N}\}$ is bounded thanks to the boundedness of $\{x^{k_j}:j\in\mathbb{N}\}$, the continuity and weak convexity of $f$. Hence, invoking item (ii) and by passing to a further subsequence if necessary, we may assume without loss of generality that $\lim\limits_{j\to\infty} \xi^{k_j}$, $\lim\limits_{j\to\infty}z^{k_j}$ and $\lim\limits_{j\to\infty}y^{k_j}$ all exist, and they belong to $\partial h_1(x^{*})$, $\partial h_2(x^{*})$ and $\partial f(x^{*})$, respectively. Using this and invoking $\lim\limits_{j\to\infty}v^{k_j}=0_n$ from item (ii) together with the outer semicontinuity of $\mathrm{N}_x\mathcal{M}$ with respect to $x\in\mathcal{M}$, we have upon passing to the limit in \eqref{proof:theorem:FunctionConvergence:item3:starstar} that 
		\begin{displaymath}
			0_n \in \partial h_1(x^{*}) - \partial h_2(x^{*}) + \nabla r(x^{*}) + \frac{f(x^{*})}{g^2(x^{*})}\nabla g(x^{*}) - \frac{\partial f(x^{*})}{g(x^{*})} + \mathrm{N}_{x^{*}}\mathcal{M}.
		\end{displaymath}
		In view of the above inclusion and Definition~\ref{definition:lifted_sta_And_critical}, we conclude that $x^{*}$ is a critical point of problem \eqref{problemn1}.
	\end{proof}
	
	\section{The case when $h_2$ has a special structure.}
	In this section, we consider problem \eqref{problemn1} with
	\begin{equation} \label{eq:SpecialCase:h_2}
		h_2(x) = \max \limits_{1\leq i\leq J} \psi_i(x),
	\end{equation}
	where $\psi_i : \mathbb{R}^n\to\mathbb{R}$ is convex and continuously differentiable on $\mathbb{R}^n$ for all $i=1,2,\cdots,J$. Motivated by the method for computing B-stationary point of nonsmooth DC programs over a convex set \cite{Pang-Razaviyayn-Alvarado:MATH_OPER_RES:2017}, we develop an enhanced MPGSA (EMPGSA) for solving problem \eqref{problemn1} with $h_2$ defined by \eqref{eq:SpecialCase:h_2} and show that the solution sequence generated by EMPGSA has subsequential convergence towards a lifted B-stationary point of the underlying problem. To this end, we first present another characterization of a lifted B-stationary point for problem \eqref{problemn1}. Let $\mathbb{N}_J:=\{1,2,\cdots,J\}$ and for any $x\in\mathbb{R}^n$, $\eta>0$, we define $\mathcal{A}(x):=\{i\in\mathbb{N}_J:\psi_i(x)=h_2(x)\}$ and $\mathcal{A}_\eta(x):= \{i\in\mathbb{N}_J:\psi_i(x)>h_2(x)-\eta\}$.
	
	\begin{proposition} \label{proposition41_description}
		Consider problem \eqref{problemn1} with $h_2$ defined by \eqref{eq:SpecialCase:h_2}. Then a vector $\overline{x}\in\mathcal{M}$ is a lifted B-stationary point of problem \eqref{problemn1} if and only if there exist $\overline{y}\in \partial f(\overline{x})$ such that for all $i\in\mathcal{A}(\overline{x})$,
		\begin{equation} \label{eq:proposition:Lifted_d_stationary_point:star2}
			\nabla \psi_i(\overline{x}) + \frac{\overline{y}}{g(\overline{x})} \in \partial h_1(\overline{x}) + \nabla r(\overline{x}) + \frac{f(\overline{x})\nabla g(\overline{x})}{g^2(\overline{x})} + \mathrm{N}_{\overline{x}}\mathcal{M},
		\end{equation}
		or equivalently, for some $t\in(0,+\infty)$,
		\begin{equation} \label{eq:proposition:Lifted_d_stationary_point:star3}
			\begin{aligned}
				0_n \in \mathop{\arg\min} \limits_{v\in\mathrm{T}_{\overline{x}} \mathcal{M}} \left\{ h_1(\overline{x}+v) + \left\langle \nabla r(\overline{x}) + \frac{f(\overline{x}) \nabla g(\overline{x})}{g^2(\overline{x})} - \frac{\overline{y}}{g(\overline{x})} - \nabla \psi_i(\overline{x})  , v \right\rangle  \right.\\
				\left. +\frac{1}{2t} \|v\|^2 \right\}.
			\end{aligned}	
		\end{equation}
	\end{proposition}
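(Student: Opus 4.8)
The plan is to collapse the set‑inclusion in Definition~\ref{definition:lifted_sta_And_critical}(i) to a finite family of vector inclusions by using the explicit form of $\partial h_2(\overline{x})$ together with the convexity of the right‑hand side. First I would record that, since each $\psi_i$ is convex and continuously differentiable, the classical subdifferential formula for a pointwise maximum of finitely many smooth convex functions gives $\partial h_2(\overline{x}) = \mathrm{conv}\{\nabla\psi_i(\overline{x}) : i\in\mathcal{A}(\overline{x})\}$. Abbreviating $C := \partial h_1(\overline{x}) + \nabla r(\overline{x}) + \frac{f(\overline{x})}{g^2(\overline{x})}\nabla g(\overline{x}) + \mathrm{N}_{\overline{x}}\mathcal{M}$, I would observe that $C$ is convex: $\partial h_1(\overline{x})$ is convex (as $h_1$ is convex), $\mathrm{N}_{\overline{x}}\mathcal{M}$ is a linear subspace, and the middle term is a fixed vector, and note that $g(\overline{x})>0$ by Assumption~\ref{ass1}(iv) so that $\overline{y}/g(\overline{x})$ is well defined for $\overline{y}\in\partial f(\overline{x})$.

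Next, for a fixed $\overline{y}\in\partial f(\overline{x})$, translation commutes with the convex hull, so $\partial h_2(\overline{x}) + \overline{y}/g(\overline{x}) = \mathrm{conv}\{\nabla\psi_i(\overline{x}) + \overline{y}/g(\overline{x}) : i\in\mathcal{A}(\overline{x})\}$; and since $C$ is convex and $\mathcal{A}(\overline{x})$ is finite, $\mathrm{conv}(S)\subseteq C \iff S\subseteq C$. Hence the inclusion $\partial h_2(\overline{x}) + \overline{y}/g(\overline{x})\subseteq C$ is equivalent to $\nabla\psi_i(\overline{x}) + \overline{y}/g(\overline{x})\in C$ for every $i\in\mathcal{A}(\overline{x})$, which is exactly \eqref{eq:proposition:Lifted_d_stationary_point:star2}. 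For the second equivalence I would fix $i\in\mathcal{A}(\overline{x})$ and $t\in(0,+\infty)$ and apply Fermat's rule to the strongly convex program in \eqref{eq:proposition:Lifted_d_stationary_point:star3}, exactly as in the proof of Proposition~\ref{prop23_dStationary}: because $h_1$ is real‑valued convex and the feasible set $\mathrm{T}_{\overline{x}}\mathcal{M}$ is a linear subspace, the subdifferential sum rule applies with no constraint qualification, so $0_n$ is a minimizer if and only if $0_n \in \partial h_1(\overline{x}) + \left(\nabla r(\overline{x}) + \frac{f(\overline{x})\nabla g(\overline{x})}{g^2(\overline{x})} - \frac{\overline{y}}{g(\overline{x})} - \nabla\psi_i(\overline{x})\right) + \frac{0_n}{t} + (\mathrm{T}_{\overline{x}}\mathcal{M})^{\perp}$. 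Using $(\mathrm{T}_{\overline{x}}\mathcal{M})^{\perp} = \mathrm{N}_{\overline{x}}\mathcal{M}$ and noting the quadratic term contributes nothing at $v=0_n$ (so the particular value of $t$ is immaterial, and "for some $t$" matches "for all $t$"), this rearranges to $\nabla\psi_i(\overline{x}) + \overline{y}/g(\overline{x})\in C$, i.e.\ \eqref{eq:proposition:Lifted_d_stationary_point:star2}. Chaining the two equivalences finishes the proof.

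I do not expect a genuine obstacle; the argument is essentially bookkeeping around the max‑rule and the convexity of $C$. The two points meriting a little care are: (i) invoking the max‑rule in the precise form indexed by the active set $\mathcal{A}(\overline{x})$ rather than a relaxed set $\mathcal{A}_\eta(\overline{x})$; and (ii) ensuring a \emph{single} multiplier $\overline{y}\in\partial f(\overline{x})$ serves all $i\in\mathcal{A}(\overline{x})$ simultaneously — but this is automatic because the quantifier ``$\exists\,\overline{y}$'' sits outside ``$\forall\,i$'' on both sides, matching the structure of Definition~\ref{definition:lifted_sta_And_critical}(i).
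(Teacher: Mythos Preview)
Your proposal is correct and follows essentially the same approach as the paper: use the max-rule $\partial h_2(\overline{x}) = \mathrm{conv}\{\nabla\psi_i(\overline{x}) : i\in\mathcal{A}(\overline{x})\}$ together with the convexity of $\partial h_1(\overline{x})+\mathrm{N}_{\overline{x}}\mathcal{M}$ to reduce the set inclusion in Definition~\ref{definition:lifted_sta_And_critical}(i) to the finite family \eqref{eq:proposition:Lifted_d_stationary_point:star2}, and then invoke Fermat's rule on the convex subproblem for the equivalence with \eqref{eq:proposition:Lifted_d_stationary_point:star3}. Your write-up is simply more explicit about the bookkeeping (the convexity of $C$, the irrelevance of $t$ at $v=0_n$, and the quantifier order on $\overline{y}$), but the logical skeleton is identical to the paper's two-sentence proof.
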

	\begin{proof}
		Since $\partial h_2(\overline{x})$ is the convex hull of $\{\nabla \psi_i(\overline{x}):i\in\mathcal{A}(\overline{x})\}$, we deduce \eqref{eq:proposition:Lifted_d_stationary_point:star2} immediately from Definition~\ref{definition:lifted_sta_And_critical} (i) and the convexity of $\partial h_1(\overline{x})$ and $\mathrm{N}_{\overline{x}}\mathcal{M}$. Furthermore, \eqref{eq:proposition:Lifted_d_stationary_point:star3} is equivalent to \eqref{eq:proposition:Lifted_d_stationary_point:star2} due to the convexity of the optimization problem involved in \eqref{eq:proposition:Lifted_d_stationary_point:star3} and the Fermat's rule. This completes the proof.
	\end{proof}
	
	Now we propose a new algorithm for solving \eqref{problemn1} with $h_2$ defined by \eqref{eq:SpecialCase:h_2}, which is extension of MPGSA and presented as Algorithm~\ref{algorithm2} below. Note that Step 2 and Step 3 (c) are motivated by the work of Pang, Razaviyayn and Alvarado \cite{Pang-Razaviyayn-Alvarado:MATH_OPER_RES:2017}, which develop an enhanced DC algorithm that converges to a B-stationary point of the associated DC programs. Similar to the enhanced DC program of \cite{Pang-Razaviyayn-Alvarado:MATH_OPER_RES:2017}, we need to solve the subproblem \eqref{eq:subproblem_MPGSA_d} for $|\mathcal{A}_\eta(x^k)|$ time in Step 2 of EMPGSA. Although this may increase the computational cost of Step 2, as we will see later, the algorithm can be guaranteed to converge to a lifted B-stationary point of problem \eqref{problemn1}. Next we conduct convergence analysis for EMPGSA.
	
	
	\begin{algorithm}[H]
		\caption{\ ${\rm EMPGSA}$ for solving \eqref{problemn1} with $h_2$ defined by \eqref{eq:SpecialCase:h_2}}\label{algorithm2}
		\begin{algorithmic}
			\State \vspace{-0.15 cm}
			\begin{description}
				\item[\bf Step 0.] Input $x^0 \in \mathcal{M}$, $\gamma \in (0,1)$, $\eta > 0$, $t_k\in (\underline{t},\overline{t})$ with $0<\underline{t}<\overline{t}$, for $k\in \mathbb{N}$. Set $k \leftarrow 0$.\vspace{0.1 cm}
				\item[\bf Step 1.] Choose $y^k \in \partial f(x^k)$.\vspace{0.1 cm}
				\item[\bf Step 2.] For each $i\in\mathcal{A}_\eta(x^k)$, compute
				\begin{equation}\label{eq:subproblem_MPGSA_d}
					\begin{aligned}
						v^{k,i} := \mathop{\arg\min}\limits_{v \in \mathrm{T}_{x^k}\mathcal{M}} \left\{ h_1(x^k+v) + \Bigg\langle \nabla r(x^k) + \frac{f(x^k) \nabla g(x^k)}{g^2(x^k)} - \frac{y^k}{g(x^k)} \right.\\
						\left.- \nabla \psi_i(x^k), v \Bigg\rangle+ \frac{1}{2 t_k} \|v\|^2 \right\}.
					\end{aligned}
				\end{equation}
				\vspace{0.1 cm}
				\item[\bf Step 3.] For $m=0,1,\dots,$ do
				\begin{itemize}
					\item[(a)] $\alpha_k = \gamma^m$,
					\item[(b)] $x^{k,i} = \mathrm{Retr}_{x^k}(\alpha_k v^{k,i})$, for all $i\in \mathcal{A}_\eta(x^k)$,
					\item[(c)] Let $\hat{i} \in \mathop{\arg\min} \left\{ F(x^{k,i}) + \frac{\alpha_k}{4 t_k} \|v^{k,i}\|^2 : i \in \mathcal{A}_\eta(x^k) \right\}$,
					\item[(d)] If $x^{k,\hat{i}}$ satisfies
					\[
					F(x^{k,\hat{i}}) \leq \left( h_1-\psi_i+r-\frac{f}{g} \right)(x^k) - \frac{\alpha_k}{4 t_k} \|v^{k,\hat{i}}\|^2 
					- \frac{\alpha_k}{4 t_k} \|v^{k,i}\|^2, \quad \forall i\in\mathcal{A}_\eta(x^k),
					\]
					set $x^{k+1} = x^{k,\hat{i}}$ and go to Step 4.
				\end{itemize}
				\vspace{0.1 cm}
				\item[\bf Step 4.] Set $k \leftarrow k+1$ and go to Step 1.
			\end{description}
		\end{algorithmic}
	\end{algorithm}
	
	In view of the finiteness of $J$, we directly obtain an auxiliary lemma below by following the same line of arguments in Lemma~\ref{lem1} and Proposition~\ref{proposition:FiniteTforlinesearch}. The proof are essentially the same and thus omitted here for brevity.
	
	\begin{lemma} \label{lemma:prework_forProving_LS_welldefined_specialCase}
		Suppose that Assumption~\ref{assumption:section4:compactSetX} holds and $x^k\in\mathcal{X}$. Let $v^{k,i}$ be defined by \eqref{eq:subproblem_MPGSA_d} and $x^{k,i}:=\mathrm{Retr}_{x^k}(\alpha v^{k,i})$ for all $i\in\mathbb{N}_J$ and some $\alpha\in (0,1]$. Then there exists $\tilde{\alpha}\in (0,1]$ that is independent of $x^k$, such that for all $i\in\mathbb{N}_J$ and $\alpha\in (0,\tilde{\alpha}]$, it holds that
		\begin{displaymath}
			\left( h_1 - \psi_i + r - \frac{f}{g} \right)(x^{k,j}) \leq \left( h_1 - \psi_i + r - \frac{f}{g} \right)(x^k) - \frac{\alpha}{2 t_k} \|v^{k,j}\|^2.
		\end{displaymath}
	\end{lemma}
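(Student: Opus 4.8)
The plan is to transcribe, with the obvious modifications, the two-step argument already carried out for Lemma~\ref{lem1} and Proposition~\ref{proposition:FiniteTforlinesearch}: here the nonsmooth piece $h_2$ is replaced, for each fixed index $i\in\mathbb{N}_J$, by the single smooth convex function $\psi_i$, which if anything simplifies the estimates, and the finiteness of $J$ guarantees that every constant produced along the way can be taken uniform in $i$ (so that in particular $\tilde\alpha$ does not depend on $x^k$ or $i$).

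\emph{Step 1 (a uniform bound on the directions).} I would first show, exactly as in Lemma~\ref{lem1}(iii), that there is a constant $M_3'>0$ with $\|v^{k,i}\|\le M_3'$ for every $x^k\in\mathcal{X}$, every $y^k\in\partial f(x^k)$ and every $i\in\mathbb{N}_J$. Since $J$ is finite and each $\nabla\psi_i$ is continuous, $\{\nabla\psi_i(x):x\in\mathcal{X},\ i\in\mathbb{N}_J\}$ is bounded on the compact set $\mathcal{X}$; combining this with Lemma~\ref{lem1}(i)--(ii) and $\inf\{g(x):x\in\mathcal{X}\}>0$, the linear coefficient $w^{k,i}:=\nabla r(x^k)+\frac{f(x^k)\nabla g(x^k)}{g^2(x^k)}-\frac{y^k}{g(x^k)}-\nabla\psi_i(x^k)$ of subproblem \eqref{eq:subproblem_MPGSA_d} is bounded by some $M_2'$ independent of $k$ and $i$. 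Evaluating the strongly convex objective of \eqref{eq:subproblem_MPGSA_d} at $v=0_n$ and using the Lipschitz continuity of $h_1$ (Assumption~\ref{ass1}(ii)) then gives $\|v^{k,i}\|\le M_3':=2\overline{t}(L_1+M_2')$, word for word as in the proof of Lemma~\ref{lem1}(iii).

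\emph{Step 2 (the per-index descent estimate).} Fix $i\in\mathbb{N}_J$ and $\alpha\in(0,1]$ and write $x^{k,i}=\mathrm{Retr}_{x^k}(\alpha v^{k,i})$. Since $\alpha v^{k,i}\in\mathrm{T}_{x^k}\mathcal{M}\cap\overline{\mathcal{B}}(0_n,M_3')$ and $\mathcal{X}$ is compact, Lemma~\ref{lemma:Retr_property}(ii) supplies constants, independent of $k$, $i$ and $\alpha$, for which the retraction inequalities \eqref{proof:proposition:FiniteTforlinesearch:eq44}--\eqref{proof:proposition:FiniteTforlinesearch:eq48} hold with $h_2$ replaced by $\psi_i$ (note $\max_{i}L_{\psi_i}<\infty$ as $J$ is finite). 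Since the objective of \eqref{eq:subproblem_MPGSA_d} is strongly convex with modulus $1/t_k$ and $0_n$ is feasible, the analogue of \eqref{proof:proposition:FiniteTforlinesearch:eq49} gives $h_1(x^k+v^{k,i})+\langle w^{k,i},v^{k,i}\rangle+\frac{1}{t_k}\|v^{k,i}\|^2\le h_1(x^k)$; combined with convexity of $h_1$ and $\alpha\in(0,1]$ (as in \eqref{proof:proposition:FiniteTforlinesearch:eq410}) this bounds $h_1(x^k+\alpha v^{k,i})$, hence, via the retraction estimate, $h_1(x^{k,i})$. For the $-\psi_i$ term I would use the smooth convex gradient inequality $\psi_i(x^k+\alpha v^{k,i})\ge\psi_i(x^k)+\langle\nabla\psi_i(x^k),\alpha v^{k,i}\rangle$ together with its retraction companion; for $r$ the estimate \eqref{proof:proposition:FiniteTforlinesearch:eq46}; and for $-f/g$ the same device used in Proposition~\ref{proposition:FiniteTforlinesearch}, namely $-\frac{f(x^{k,i})}{g(x^{k,i})}\le\big(\frac{\sqrt{f(x^k)}}{g(x^k)}\big)^2 g(x^{k,i})-2\frac{\sqrt{f(x^k)}}{g(x^k)}\sqrt{f(x^{k,i})}$, after which $g(x^{k,i})$ is bounded from above and $\sqrt{f(x^{k,i})}$ from below, invoking the weak convexity of $\sqrt{f}$ with subgradient $\frac{y^k}{2\sqrt{f(x^k)}}$ when $f(x^k)>0$ and Lemma~\ref{12_8_lemma21_definition}(ii) to cover the case $f(x^k)=0$.

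\emph{Step 3 (assembling and choosing $\tilde\alpha$).} Adding the four bounds, the linear terms in $\alpha v^{k,i}$ cancel identically — this is precisely what the subtraction of $\nabla\psi_i(x^k)$ in \eqref{eq:subproblem_MPGSA_d} is designed to produce — leaving $\big(h_1-\psi_i+r-\frac{f}{g}\big)(x^{k,i})\le\big(h_1-\psi_i+r-\frac{f}{g}\big)(x^k)-\big(\frac{1}{\alpha t_k}-\tilde c\big)\alpha^2\|v^{k,i}\|^2$, where $\tilde c>0$ has the same form as $c$ in \eqref{eq:definition_c} but with the $L_{h_2}$-contribution replaced by $\max_{i}L_{\psi_i}$, hence is independent of $k$, $i$ and $\alpha$. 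Taking $\tilde\alpha:=\min\{1,\frac{1}{2\tilde c\,\overline{t}}\}$ forces $\frac{1}{\alpha t_k}-\tilde c\ge\frac{1}{2\alpha t_k}$ whenever $\alpha\le\tilde\alpha$ (using $t_k\le\overline{t}$), which gives the claimed inequality. The only point demanding genuine care is the bookkeeping in the $-f/g$ estimate — keeping the modulus $\tau$ and the $\sqrt{f}/g$ factors straight and checking the $f(x^k)=0$ case — together with the routine but necessary verification that all constants can be chosen uniformly over the finite index set $\mathbb{N}_J$; the rest is a line-by-line copy of the proof of Proposition~\ref{proposition:FiniteTforlinesearch}.
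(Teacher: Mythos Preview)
Your proposal is correct and follows exactly the approach the paper intends: the paper itself omits the proof, stating only that ``in view of the finiteness of $J$, we directly obtain an auxiliary lemma below by following the same line of arguments in Lemma~\ref{lem1} and Proposition~\ref{proposition:FiniteTforlinesearch}.'' Your three-step transcription --- bounding $\|v^{k,i}\|$ uniformly, reproducing the retraction/convexity estimates with $\psi_i$ in place of $h_2$, and assembling the pieces to extract a uniform $\tilde\alpha$ --- is precisely this line of argument, and your remark that the finiteness of $J$ is what makes all constants uniform in $i$ is the one point the paper singles out.
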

	
	Equipped with Lemma~\ref{lemma:prework_forProving_LS_welldefined_specialCase}, we next prove that the line-search procedure in Step 3 of EMPGSA is well-defined.
	
	\begin{proposition} \label{proposition:finiteIteration_specialCase}
		Suppose that Assumption~\ref{assumption:section4:compactSetX} holds and let $\tilde{\alpha}$ be defined by Lemma~\ref{lemma:prework_forProving_LS_welldefined_specialCase}. Then Step 3 of EMPGSA terminates at some $\alpha_k>\tilde{\alpha}\gamma$ for all $k\in\mathbb{N}$ in at most $\left\lceil \frac{\log \tilde{\alpha}}{\log \gamma} \right\rceil + 1$ iterations.
	\end{proposition}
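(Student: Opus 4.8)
The plan is to mimic the line-search analysis of Proposition~\ref{proposition:FiniteTforlinesearch}, but now leveraging Lemma~\ref{lemma:prework_forProving_LS_welldefined_specialCase} as the essential per-component decrease estimate. First I would observe that, because of the stopping criterion in Step~3(d), it suffices to prove the claim under the inductive hypothesis $x^k\in\mathcal{X}$; the base case $x^0\in\mathcal{X}$ is immediate, and once Step~3(d) is satisfied, the resulting $x^{k+1}=x^{k,\hat{i}}$ satisfies $F(x^{k+1})\le\left(h_1-\psi_{\hat i}+r-\frac f g\right)(x^k)\le F(x^k)$ (using $h_2(x^k)=\max_i\psi_i(x^k)\ge\psi_{\hat i}(x^k)$), so $x^{k+1}\in\mathcal{X}$ and the induction proceeds.

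Next, fix $k$ with $x^k\in\mathcal{X}$ and let $\tilde\alpha\in(0,1]$ be the uniform constant from Lemma~\ref{lemma:prework_forProving_LS_welldefined_specialCase}. For any $\alpha\in(0,\tilde\alpha]$ and any $i,j\in\mathcal{A}_\eta(x^k)\subseteq\mathbb{N}_J$, Lemma~\ref{lemma:prework_forProving_LS_welldefined_specialCase} gives
$$
\left(h_1-\psi_i+r-\frac f g\right)(x^{k,j})\le\left(h_1-\psi_i+r-\frac f g\right)(x^k)-\frac{\alpha}{2t_k}\|v^{k,j}\|^2.
$$
I would then split the left-hand side: since $-h_2\le-\psi_i$ pointwise, we have $F(x^{k,j})=\left(h_1-h_2+r-\frac f g\right)(x^{k,j})\le\left(h_1-\psi_i+r-\frac f g\right)(x^{k,j})$, hence
$$
F(x^{k,j})\le\left(h_1-\psi_i+r-\frac f g\right)(x^k)-\frac{\alpha}{2t_k}\|v^{k,j}\|^2
=\left(h_1-\psi_i+r-\frac f g\right)(x^k)-\frac{\alpha}{4t_k}\|v^{k,j}\|^2-\frac{\alpha}{4t_k}\|v^{k,j}\|^2.
$$
Adding $\frac{\alpha}{4t_k}\|v^{k,j}\|^2$ to both sides yields $F(x^{k,j})+\frac{\alpha}{4t_k}\|v^{k,j}\|^2\le\left(h_1-\psi_i+r-\frac f g\right)(x^k)-\frac{\alpha}{4t_k}\|v^{k,j}\|^2$ for every $i,j\in\mathcal{A}_\eta(x^k)$. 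Taking the minimum over $j$ on the left identifies the index $\hat i$ chosen in Step~3(c), so for that $\hat i$ and every $i\in\mathcal{A}_\eta(x^k)$,
$$
F(x^{k,\hat i})+\frac{\alpha}{4t_k}\|v^{k,\hat i}\|^2\le\left(h_1-\psi_i+r-\frac f g\right)(x^k)-\frac{\alpha}{4t_k}\|v^{k,i}\|^2,
$$
which, after moving $\frac{\alpha}{4t_k}\|v^{k,\hat i}\|^2$ to the right, is exactly the test in Step~3(d). Thus Step~3 accepts as soon as $\alpha_k=\gamma^m\le\tilde\alpha$, i.e. after at most $\lceil\log\tilde\alpha/\log\gamma\rceil+1$ iterations of the loop, and the accepted stepsize satisfies $\alpha_k>\tilde\alpha\gamma$ (since the previous trial $\alpha_k/\gamma$ either exceeded $\tilde\alpha$ or was itself the first accepted value), uniformly in $k$ because $\tilde\alpha$ does not depend on $x^k$.

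The main obstacle is essentially bookkeeping: one must be careful that the quantity minimized in Step~3(c), namely $F(x^{k,i})+\frac{\alpha_k}{4t_k}\|v^{k,i}\|^2$, is precisely the left-hand side that Lemma~\ref{lemma:prework_forProving_LS_welldefined_specialCase} controls after the $-h_2\le-\psi_i$ relaxation and the $\frac12=\frac14+\frac14$ split, and that the two ``$\frac{\alpha_k}{4t_k}$'' penalty terms in the Step~3(d) inequality — one with $v^{k,\hat i}$, one with $v^{k,i}$ — are both produced by this argument (one from the split, one from the $\arg\min$ selection). There is no genuine analytic difficulty beyond Lemma~\ref{lemma:prework_forProving_LS_welldefined_specialCase} itself, whose proof is asserted to follow the same Retraction estimates (Lemma~\ref{lemma:Retr_property}) and boundedness facts (Lemma~\ref{lem1}) already used for Proposition~\ref{proposition:FiniteTforlinesearch}.
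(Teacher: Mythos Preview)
Your argument is essentially the paper's own proof, just with the three ingredients (the $\arg\min$ defining $\hat i$, the relaxation $-h_2\le-\psi_i$, and Lemma~\ref{lemma:prework_forProving_LS_welldefined_specialCase}) applied in a slightly different order; the paper first uses the $\hat i$-minimality to pass from $F(x^{k,\hat i})$ to $F(x^{k,i})$, then relaxes $-h_2$ to $-\psi_i$, then invokes the lemma with $j=i$, arriving at the same chain of inequalities.

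One small slip to fix: in your inductive step you write $F(x^{k+1})\le\bigl(h_1-\psi_{\hat i}+r-\tfrac{f}{g}\bigr)(x^k)\le F(x^k)$, but the second inequality goes the wrong way, since $h_2(x^k)\ge\psi_{\hat i}(x^k)$ gives $\bigl(h_1-\psi_{\hat i}+r-\tfrac{f}{g}\bigr)(x^k)\ge F(x^k)$. The correct argument is to apply the Step~3(d) inequality with some $i\in\mathcal{A}(x^k)\subseteq\mathcal{A}_\eta(x^k)$, for which $\psi_i(x^k)=h_2(x^k)$ and hence $\bigl(h_1-\psi_i+r-\tfrac{f}{g}\bigr)(x^k)=F(x^k)$; this is exactly how the paper (implicitly) justifies that $\{F(x^k)\}$ is nonincreasing.
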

	\begin{proof}
		Since $x_0\in\mathcal{M}$ and the stopping rule in Step 3 implies that $\{F(x^k):k\in\mathbb{N}\}$ is nonincreasing, it suffices to prove this proposition by assuming $x^k\in\mathcal{X}$. Hence, invoking the choice of $\hat{i}$ in Step 3 (c) of EMPGSA, we have for any $i \in \mathcal{A}_\eta(x^k)$ and $\alpha_k \in (0,\tilde{\alpha}]$ that 
		\begin{displaymath}
			\begin{aligned}
				F(x^{k,\hat{i}}) &\leq F(x^{k,i}) + \frac{\alpha_k}{4 t_k} \|v^{k,i}\|^2 - \frac{\alpha_k}{4 t_k} \|v^{k,\hat{i}}\|^2 \\
				&\leq \left( h_1-\psi_i+r-\frac{f}{g} \right)(x^{k,i}) + \frac{\alpha_k}{4 t_k} \|v^{k,i}\|^2 - \frac{\alpha_k}{4 t_k} \|v^{k,\hat{i}}\|^2 		\\
				&\leq \left( h_1-\psi_i+r-\frac{f}{g} \right)(x^k) - \frac{\alpha_k}{4 t_k}\|v^{k,i}\|^2 - \frac{\alpha_k}{4 t_k}\|v^{k,\hat{i}}\|^2,
			\end{aligned}
		\end{displaymath}
		where the second inequality follows from the fact that $h_2(x^{k,i})\geq\psi_i(x^{k,i})$, $\forall i\in\mathcal{A}_\eta(x^{k,i})$, and the last inequality holds thanks to Lemma~\ref{lemma:prework_forProving_LS_welldefined_specialCase}. This together with the updating rule of $\alpha_k$ indicates the desired result.
	\end{proof}

	Now we establish the convergence results of EMPGSA in the following theorem.
	\begin{theorem} \label{theorem:convergence_result_algorithm5_1}
		Suppose that Assumption~\ref{assumption:section4:compactSetX} holds. Let $\{x^k:k\in\mathbb{N}\}$ be the solution sequence generated by EMPGSA. Then the following statements hold.
		\begin{enumerate}[label = {\upshape(\roman*)}]
			\item For all $k\in\mathbb{N}$, it holds that
			\begin{equation}  \label{theorem:convergence_result_algorithm5_1:item1}
				F(x^{k+1}) \leq F(x^k) - \frac{\tilde{\alpha}\gamma}{4 t_k} \|v^{k,i}\|^2,~\forall i\in\mathcal{A}_\eta(x^k). \\
			\end{equation}
			Consequently, $F_{*}=\lim\limits_{k\to\infty} F(x^k)$ exists.
			\item $\{x^k:k\in\mathbb{N}\}$ is bounded and any accumulation point of $\{x^k:k\in\mathbb{N}\}$ is a lifted B-stationary point of problem \eqref{problemn1}.
		\end{enumerate}
	\end{theorem}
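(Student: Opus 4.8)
The plan is to parallel the proof of Theorem~\ref{theorem:FunctionConvergence}, adapted to the enhanced update and the structure $h_2=\max_{1\le i\le J}\psi_i$, and then to finish by invoking the characterization of lifted B-stationarity in Proposition~\ref{proposition41_description}. For item~(\romannumeral1), first note that the exactly-active set $\mathcal{A}(x^k)$ is nonempty and, since $\eta>0$, satisfies $\mathcal{A}(x^k)\subseteq\mathcal{A}_\eta(x^k)$. By Proposition~\ref{proposition:finiteIteration_specialCase}, Step~3 of EMPGSA terminates with some $\alpha_k>\tilde\alpha\gamma$, so the acceptance test in Step~3(d) holds with this $\alpha_k$ for every $i\in\mathcal{A}_\eta(x^k)$, i.e.
\[
F(x^{k+1})\le\Bigl(h_1-\psi_i+r-\tfrac{f}{g}\Bigr)(x^k)-\frac{\alpha_k}{4t_k}\|v^{k,\hat\imath}\|^2-\frac{\alpha_k}{4t_k}\|v^{k,i}\|^2 .
\]
Evaluating this at an index $i\in\mathcal{A}(x^k)$, where $\psi_i(x^k)=h_2(x^k)$ turns the first term on the right into $F(x^k)$, and discarding the nonnegative $\hat\imath$-term while using $\alpha_k>\tilde\alpha\gamma$, yields \eqref{theorem:convergence_result_algorithm5_1:item1}. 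In particular $\{F(x^k)\}$ is nonincreasing, hence $x^k\in\mathcal{X}$ for all $k$; since $F$ is continuous on the compact set $\mathcal{X}$ it is bounded below there, so $F_*=\lim_kF(x^k)$ exists and $F(x^k)-F(x^{k+1})\to0$.

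For item~(\romannumeral2), boundedness of $\{x^k\}$ is immediate from $\{x^k\}\subseteq\mathcal{X}$. Let $x^*$ be an accumulation point and $x^{k_j}\to x^*$. Two observations drive the argument. (a) For each $i\in\mathcal{A}(x^*)$ one has $i\in\mathcal{A}_\eta(x^{k_j})$ for all large $j$, because $\psi_i(x^{k_j})-h_2(x^{k_j})\to\psi_i(x^*)-h_2(x^*)=0>-\eta$ by continuity. (b) For each $i\in\mathcal{A}(x^*)$, $v^{k_j,i}\to0_n$: rewriting the Step~3(d) inequality for index $i$ (valid for large $j$ by (a)) and discarding the $\hat\imath$-term gives
\[
\frac{\alpha_{k_j}}{4t_{k_j}}\|v^{k_j,i}\|^2\le\bigl(h_2(x^{k_j})-\psi_i(x^{k_j})\bigr)+\bigl(F(x^{k_j})-F(x^{k_j+1})\bigr),
\]
where both terms on the right tend to $0$ (the first by continuity since $i\in\mathcal{A}(x^*)$, the second by item~(\romannumeral1)), while $\alpha_{k_j}/t_{k_j}\ge\tilde\alpha\gamma/\overline t>0$.

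Next I would pass to the limit in the first-order optimality condition of the subproblems. Since $\mathrm{T}_{x^k}\mathcal{M}$ is a linear subspace with orthogonal complement $\mathrm{N}_{x^k}\mathcal{M}$, Fermat's rule applied to \eqref{eq:subproblem_MPGSA_d} produces, for each $i\in\mathcal{A}_\eta(x^k)$, some $\xi^{k,i}\in\partial h_1(x^k+v^{k,i})$ with
\[
0_n\in\xi^{k,i}+\nabla r(x^k)+\frac{f(x^k)\nabla g(x^k)}{g^2(x^k)}-\frac{y^k}{g(x^k)}-\nabla\psi_i(x^k)+\frac{v^{k,i}}{t_k}+\mathrm{N}_{x^k}\mathcal{M}.
\]
Here $\{\xi^{k_j,i}\}$ is bounded by the Lipschitz constant $L_1$ of $h_1$, and $\{y^k\}$ is bounded on $\mathcal{X}$ by Lemma~\ref{12_8_lemma21_definition}(\romannumeral2) together with the weak convexity of $\sqrt f$; since $\mathcal{A}(x^*)$ is finite, I would pass to a single further subsequence along which $y^{k_j}\to y^*$ and $\xi^{k_j,i}\to\xi^{*,i}$ for all $i\in\mathcal{A}(x^*)$ simultaneously, with $y^*\in\partial f(x^*)$ and $\xi^{*,i}\in\partial h_1(x^*)$ by closedness of the subdifferentials. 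Invoking (b), the continuity of $\nabla r,\nabla g,\nabla\psi_i,f,g$, the positivity $g(x^*)>0$ from Assumption~\ref{ass1}, $t_{k_j}\ge\underline t$, and the outer semicontinuity of $x\mapsto\mathrm{N}_x\mathcal{M}$, the displayed inclusion passes to the limit to give
\[
\nabla\psi_i(x^*)+\frac{y^*}{g(x^*)}\in\partial h_1(x^*)+\nabla r(x^*)+\frac{f(x^*)\nabla g(x^*)}{g^2(x^*)}+\mathrm{N}_{x^*}\mathcal{M}\quad\text{for every }i\in\mathcal{A}(x^*),
\]
with one and the same $y^*\in\partial f(x^*)$. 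By Proposition~\ref{proposition41_description}, $x^*$ is a lifted B-stationary point of \eqref{problemn1}.

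The main obstacle is observation (b). Unlike the ManPG-type analysis of Theorem~\ref{theorem:FunctionConvergence}, the per-iteration decrease here only directly controls $\|v^{k,\hat\imath}\|$ and the directions at exactly-active indices, so one cannot hope that all $v^{k,i}$ with $i\in\mathcal{A}_\eta(x^k)$ vanish; the resolution is that only the finitely many indices active at the \emph{limit} $x^*$ need to be handled, and for those the slack $h_2(x^{k_j})-\psi_i(x^{k_j})$ — precisely the quantity that spoils the estimate for a generic $i\in\mathcal{A}_\eta$ — disappears along $k_j$. A secondary technical point is the bookkeeping needed to extract a single subsequence carrying the common multiplier $y^*$ together with all the $\xi^{*,i}$, so that Proposition~\ref{proposition41_description} can be applied with one and the same $\bar y$.
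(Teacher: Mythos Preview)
Your proof is correct and follows the same route as the paper: descent via the Step~3(d) acceptance test, then for each $i\in\mathcal{A}(x^*)$ showing $i\in\mathcal{A}_\eta(x^{k_j})$ eventually and $v^{k_j,i}\to 0$, and finally passing to the limit in the subproblem's first-order optimality condition to invoke Proposition~\ref{proposition41_description}. The only minor difference is in observation~(b): the paper cites \eqref{theorem:convergence_result_algorithm5_1:item1} directly for $i\in\mathcal{A}_\eta(x^{k_j})$, whereas you work from the Step~3(d) inequality and explicitly track the slack $h_2(x^{k_j})-\psi_i(x^{k_j})\to 0$; your variant is arguably cleaner, since your argument for item~(\romannumeral1) only literally establishes \eqref{theorem:convergence_result_algorithm5_1:item1} at exactly-active indices $i\in\mathcal{A}(x^k)$, and the index $i\in\mathcal{A}(x^*)$ need not lie in $\mathcal{A}(x^{k_j})$.
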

	\begin{proof}
		First we prove item (i). In view of Proposition~\ref{proposition:finiteIteration_specialCase} and the fact that $h_2(x^k)=\max \{\psi_i(x^k):i\in\mathcal{A}_\eta(x^k)\}$, we can directly deduce that \eqref{theorem:convergence_result_algorithm5_1:item1} holds. Hence, $\{F(x^k):k\in\mathbb{N}\}$ is nonincreasing. This together with the existence of optimal solutions to problem \eqref{problemn1} implies that $F_{*}=\lim\limits_{k\to\infty}F(x^k)$ exists.
		
		Next we show item (ii). The boundedness of $\{x^k\in\mathbb{N}\}$ follows immediately from Assumption~\ref{assumption:section4:compactSetX} and the fact that $\{F(x^k):k\in\mathbb{N}\}$ is nonincreasing. Let $x^{*}$ be an accumulation point $\{x^k:k\in\mathbb{N}\}$ and let $\{x^{k_j}:j\in\mathbb{N}\}$ be a subsequence such that $\lim\limits_{j\to\infty}x^{k_j}=x^{*}$. Pick an arbitrary $i\in\mathcal{A}(x^{*})$. By the definition of $\mathcal{A}^{*}$ and $\mathcal{A}_\eta(x^{k_j})$, $\lim\limits_{j\to\infty}x^{k_j}=x^{*}$, and the continuity of $\psi_i$, it is not difficult to see that there exists $j_0\in\mathbb{N}$ such that 
		\begin{equation}  \label{proof:theorem:convergence_result_algorithm5_1:star2}
			i\in \mathcal{A}_\eta(x^{k_j}),~ \mbox{for all }j>j_0.
		\end{equation}
		Using this, $t_k\in [\underline{t},\overline{t}]$ and \eqref{theorem:convergence_result_algorithm5_1:item1}, we further obtain that
		\begin{displaymath}
			F(x^{k_j+1}) \leq F(x^{k_j}) - \frac{\tilde{\alpha}\gamma}{4 \overline{t}}\|v^{k_j,i}\|^2,~\mbox{for }j>j_0.
		\end{displaymath}
		By passing to the limit in the above inequality, we have that
		\begin{equation}  \label{proof:theorem:convergence_result_algorithm5_1:star3}
			\lim \limits_{j\to\infty}\|v^{k_j,i}\|=0.
		\end{equation}
		Moreover, in view of the first-order optimality of subproblem \eqref{eq:subproblem_MPGSA_d}, for all $j>j_0$ there exists $\xi^{k_j}\in \partial h_1(x^{k_j}+v^{k_j,i})$, such that
		\begin{equation}  \label{proof:theorem:convergence_result_algorithm5_1:star4}
			0_n\in \xi^{k_j} + \nabla r(x^{k_j}) + \frac{f(x^{k_j})\nabla g(x^{k_j})}{g^2(x^{k_j})} - \frac{y^{k_j}}{g(x^{k_j})} - \nabla \psi_i(x^{k_j}) + \frac{v^{k_j}}{t_{k_j}} + \mathrm{N}_{x^{k_j}}\mathcal{M},
		\end{equation}
		where $y^{k_j}\in \partial f(x^{k_j})$. Now, by following the same argument as those in the proof of Theorem~\ref{theorem:FunctionConvergence} (iii), we deduce from the above display that 
		\begin{displaymath}
			0_n \in \partial h_1(x^{*}) + \nabla r(x^{*}) + \frac{f(x^{*})\nabla g(x^{*})}{g^2(x^{*})} - \frac{\partial f(x^{*})}{g(x^{*})} - \nabla \psi_i(x^{*}) + \mathrm{N}_{x^{*}}\mathcal{M}.
		\end{displaymath}
		Since the index $i$ is chosen arbitrarily from $\mathcal{A}(x^{*})$, we conclude from the above inclusion and Proposition~\ref{proposition41_description} that $x^{*}$ is a lifted B-stationary point of problem \eqref{problemn1}. This completes the proof.
	\end{proof}
	
	\section{Sequential convergence of MPGSA.}
	In this section, we shall establish the convergence of the whole solution sequence $\{x^k:k\in\mathbb{N}\}$ generated by MPGSA under further assumption. Since the ManPG method is a special case of MPGSA as mentioned in section 4, our convergence results also imply the convergence of the entire solution sequence generated by the ManPG, which seems unknown in the literature. In order to facilitate our sequential convergence analysis, we first define several quantities in the following Lemma.
	\begin{lemma}\label{lemsec6}
		The following quantities are positive and finite:
		\begin{enumerate}[label = {\upshape(\roman*)}]
			\item $M_f:=\sup\limits_{x\in\mathcal{X}}|f(x)|$, $\ M_g:=\sup\limits_{x\in\mathcal{X}}g(x)$, $\ m_g:=\inf\limits_{x\in\mathcal{X}}g(x)$, $\ M_y:=\sup\{\|y\|:y\in\partial f(x), x\in\mathcal{X}\}$;
			\item $M_g':=\sup\{g(x): x\in\mathcal{X}+\overline{\mathcal{B}}(0_n, M_3)\}$;
			\item $L_{1/g}$ is the Lipschitz modulus of $1/g$ on $\mathcal{X}$, $L_g'$ is the Lipschitz modulus of $g$ on $\mathcal{X}+\overline{\mathcal{B}}(0_n, M_3)$.
		\end{enumerate}
	\end{lemma}
	
	Then we introduce a useful lemma concerning MPGSA.
	\begin{lemma} \label{lemma:Chapter6:lem62}
		Suppose Assumption~\ref{assumption:section4:compactSetX} holds. Let $\{(x^k,y^k,z^k,v^k):k\in\mathbb{N}\}$ be generated by MPGSA. Then, there exists a compact set $\Omega \subseteq \mathbb{R}^d$ $(\Omega = \{0\}$ if $d=0)$ such that for any $k\in\mathbb{N}$ and any local defining function\footnote{If $\mathcal{M}$ is an open submanifold of $\mathbb{R}^n$, i.e., $d=0$, we define $\Phi_k=0$ for all $k\in\mathbb{N}$.} $\Phi_k:\mathbb{R}^n\to\mathbb{R}^d$ of $\mathcal{M}$ at $x^k$, there exists $s^k\in\Omega$ satisfying
		\begin{equation}	\label{eq:lemma61}
			0_n \in \partial h_1(x^k+v^k) - z^k + \nabla r(x^k) + \frac{f(x^k) \nabla g(x^k)}{g^2(x^k)} - \frac{y^k}{g(x^k)} + \frac{1}{t_k} v^k + \mathrm{D} \Phi_k(x^k)^{*}(s^k).
		\end{equation}
	\end{lemma}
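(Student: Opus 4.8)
The plan is to start from the first-order optimality condition of the strongly convex subproblem \eqref{eqAlgV}, which defines $v^k$. Since the constraint set $\mathrm{T}_{x^k}\mathcal{M}$ is a linear subspace, the Fermat rule together with the subdifferential sum rule for the convex function $v\mapsto h_1(x^k+v)+\langle w^k,v\rangle+\frac{1}{2t_k}\|v\|^2$ (valid because $h_1$ is finite-valued and convex) gives that there exists $\xi^k\in\partial h_1(x^k+v^k)$ with
$$
0_n\in \xi^k + w^k + \frac{1}{t_k}v^k + \mathrm{N}_{x^k}\mathcal{M},
$$
where $w^k = \nabla r(x^k)+\frac{f(x^k)\nabla g(x^k)}{g^2(x^k)}-\frac{y^k}{g(x^k)}-z^k$. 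This already yields the displayed inclusion \eqref{eq:lemma61} once I replace $\mathrm{N}_{x^k}\mathcal{M}$ by its description via the differential of a local defining function.

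The second step is to identify $\mathrm{N}_{x^k}\mathcal{M}$ with $\{\mathrm{D}\Phi_k(x^k)^{*}(s):s\in\mathbb{R}^d\}$. By Definition~\ref{definition:embeddedSubmanifold} and the discussion following it, for any local defining function $\Phi_k$ at $x^k$ one has $\mathrm{T}_{x^k}\mathcal{M}=\ker\mathrm{D}\Phi_k(x^k)$, hence $\mathrm{N}_{x^k}\mathcal{M}=(\ker\mathrm{D}\Phi_k(x^k))^{\perp}=\mathrm{range}\,\mathrm{D}\Phi_k(x^k)^{*}$. Therefore the element of $\mathrm{N}_{x^k}\mathcal{M}$ appearing above can be written as $\mathrm{D}\Phi_k(x^k)^{*}(s^k)$ for some $s^k\in\mathbb{R}^d$; when $d=0$ the normal space is $\{0_n\}$ and we set $s^k=0$, consistent with the footnote convention. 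This gives \eqref{eq:lemma61} for each $k$, but with $s^k$ a priori unbounded.

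The main obstacle — and the reason the statement asserts a single compact set $\Omega$ independent of $k$ — is to show the $s^k$ can be chosen to lie in a fixed compact set. The plan here is: (a) all the other terms in \eqref{eq:lemma61}, namely $\xi^k$, $z^k$, $\nabla r(x^k)$, $\frac{f(x^k)\nabla g(x^k)}{g^2(x^k)}$, $\frac{y^k}{g(x^k)}$, and $\frac{1}{t_k}v^k$, are uniformly bounded over $k\in\mathbb{N}$: $\{x^k\}\subseteq\mathcal{X}$ is compact by Assumption~\ref{assumption:section4:compactSetX} and Theorem~\ref{theorem:FunctionConvergence}(iii), the subdifferentials of the convex/weakly convex functions $h_1,h_2,\sqrt{f}$ are bounded on compacts (and $\partial f$ via Lemma~\ref{12_8_lemma21_definition}(ii)), $\nabla r,\nabla g,f,g,1/g$ are continuous hence bounded on $\mathcal{X}$, $\|v^k\|\le M_3$ by Lemma~\ref{lem1}(iii), and $t_k\ge\underline{t}$; so $\mathrm{D}\Phi_k(x^k)^{*}(s^k)$ lies in a fixed ball $\overline{\mathcal{B}}(0_n,R)$. (b) For the norm of $s^k$ to be controlled by the norm of $\mathrm{D}\Phi_k(x^k)^{*}(s^k)$, one needs a uniform lower bound on the smallest nonzero singular value of $\mathrm{D}\Phi_k(x^k)$; since $\mathrm{D}\Phi_k(x^k)$ has full rank $d$ (condition (b) of Definition~\ref{definition:embeddedSubmanifold}) and $s^k\in\mathrm{range}\,\mathrm{D}\Phi_k(x^k)^{*}$... wait — actually $s^k\in\mathbb{R}^d$ and $\mathrm{D}\Phi_k(x^k)^{*}$ is injective on $\mathbb{R}^d$, so $\|s^k\|\le \sigma_{\min}(\mathrm{D}\Phi_k(x^k))^{-1}R$. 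The delicate point is that $\Phi_k$ is an \emph{arbitrary} local defining function, so $\sigma_{\min}(\mathrm{D}\Phi_k(x^k))$ need not be uniformly bounded below across all choices. I would circumvent this by first fixing, for each point $x$ in the compact set $\mathcal{X}$, one canonical local defining function (e.g. via the standard construction from charts, or for explicit manifolds like the Stiefel manifold the natural one), obtaining on the compact set $\mathcal{X}$ a uniform bound $\sigma_{\min}(\mathrm{D}\Phi(x))\ge c_0>0$ by continuity and compactness; this produces a canonical $\tilde s^k$ in a fixed compact set $\Omega_0$. Then, for an arbitrary $\Phi_k$, since both defining functions describe the same normal space, $\mathrm{D}\Phi_k(x^k)^{*}(s^k)=\mathrm{D}\Phi(x^k)^{*}(\tilde s^k)$, and one takes $s^k=(\mathrm{D}\Phi_k(x^k)^{*})^{\dagger}\mathrm{D}\Phi(x^k)^{*}(\tilde s^k)$; but this $s^k$ may again be large. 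The cleanest resolution, which I expect the authors intend, is to read the statement as: \emph{there exists} a choice of $s^k\in\Omega$ — and it suffices to exhibit one, namely using the canonical $\Phi$ on $\mathcal{X}$, for which $\Omega=\Omega_0:=\overline{\mathcal{B}}(0_d,c_0^{-1}R)$ works; the phrase "for any local defining function $\Phi_k$" then just records that $\mathrm{range}\,\mathrm{D}\Phi_k(x^k)^{*}=\mathrm{range}\,\mathrm{D}\Phi(x^k)^{*}=\mathrm{N}_{x^k}\mathcal{M}$ is independent of the choice, so \eqref{eq:lemma61} continues to hold with the \emph{same} normal vector re-expressed through whichever $\Phi_k$ one likes (absorbing the change of basis into $s^k$, which then need not stay in $\Omega$ — so more carefully the quantifier on $s^k$ should be existential per $\Phi_k$). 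I will therefore organize the proof as: establish the uniform bound $R$ on all non-normal terms; fix the canonical family $\{\Phi_x\}_{x\in\mathcal{X}}$ and the uniform constant $c_0$; set $\Omega:=\overline{\mathcal{B}}(0_d,R/c_0)$; and conclude \eqref{eq:lemma61} with $s^k\in\Omega$.
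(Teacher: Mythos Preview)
Your overall strategy matches the paper's: derive the optimality condition for \eqref{eqAlgV}, express the normal-space term via a local defining function, show the normal vector $\mathrm{D}\Phi_k(x^k)^{*}(s^k)$ is uniformly bounded because every other term in the inclusion is, and then squeeze a bound on $s^k$ out of the full-rank condition $\mathrm{rank}\,\mathrm{D}\Phi_k(x^k)=d$ combined with compactness of $\{x^k\}$. The paper packages the last step as a contradiction argument (pass to a subsequence $x^{k_j}\to x^{*}$, pick a single $\Phi_{*}$ valid near $x^{*}$, and use continuity of $\mathrm{D}\Phi_{*}$ to force $\|\mathrm{D}\Phi_{*}(x^{k_j})^{*}(s^{k_j})\|\to\infty$), whereas you go for a direct uniform lower bound on $\sigma_{\min}(\mathrm{D}\Phi(x))$ over a canonical finite cover. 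These are equivalent compactness arguments.

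You are right to flag the quantifier issue, and in fact the paper's proof is no more careful than yours on this point. If $\Phi_k$ is genuinely allowed to be \emph{any} local defining function at $x^k$, then replacing $\Phi_k$ by $\varepsilon\Phi_k$ with $\varepsilon\to 0$ forces $s^k$ to blow up while leaving \eqref{eq:lemma61} intact; no compact $\Omega$ can absorb this. The paper's contradiction step computes $\mathrm{D}\Phi_{*}(x^{k_j})^{*}(s^{k_j})$ with $s^{k_j}$ coming from a possibly different $\Phi_{k_j}$, which is not the normal vector unless $\Phi_{k_j}=\Phi_{*}$. What is actually established---by either argument---and what is actually \emph{used} downstream in Proposition~\ref{Case1:Prop1} (where the authors explicitly restrict to ``$\Phi_k\in\Phi$ and $\Phi$ has finite elements'') is the weaker statement: for $\Phi_k$ drawn from a fixed finite family of defining functions covering the compact set $\mathcal{X}$, the multipliers $s^k$ lie in a compact $\Omega$. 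Your proposed organization (fix a canonical finite cover, bound $\sigma_{\min}$ below, set $\Omega=\overline{\mathcal{B}}(0_d,R/c_0)$) proves exactly this and is the cleanest way to write it; just state explicitly that this is the version you are proving, rather than trying to reconcile it with the literal ``for any $\Phi_k$'' quantifier.
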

	\begin{proof}
		If $\mathcal{M}$ is an open submanifold of $\mathbb{R}^n$, \cite[Theorem 3.15]{Boumal:Intro_Opt_Man:2023} yields that $\mathrm{T}_{x^k}\mathcal{M}=\mathbb{R}^n$ for $k\in\mathbb{N}$. Then, we obtain \eqref{eq:lemma61} with $s_k=0$ by applying the first-order optimality condition for the convex problem \eqref{eqAlgV}. The compactness of $\Omega$ is obvious since $\Omega$ can be chosen as $\{0\}$. 
		
		We next consider the case when $\mathcal{M}$ is not an open submanifold of $\mathbb{R}^n$. Let $\Phi_k:\mathbb{R}^n\to\mathbb{R}^d$ be any local defining function for $\mathcal{M}$ at $x^k$. According to \cite[Theorem 3.15]{Boumal:Intro_Opt_Man:2023} and \cite[Example 6.8]{Rockafellar-Wets:VariationalAnalysis:2009}, we have $\mathrm{T}_{x^k}\mathcal{M}=\mathrm{ker}~\mathrm{D}\Phi_k(x^k)$ and $\mathrm{N}_{x^k}\mathcal{M}=\{\mathrm{D}\Phi_k(x^k)^{*}(s):s\in\mathbb{R}^d\}$. Invoking \cite[Theorem 8.15]{Rockafellar-Wets:VariationalAnalysis:2009}, we deduce \eqref{eq:lemma61} from the convexity of problem \eqref{eqAlgV}, since its objective function is Lipschitz continuous. We then show the boundedness of $\{s^k:k\in\mathbb{N}\}$. Theorem~\ref{theorem:FunctionConvergence} indicates that $\{(x^k,v^k):k\in\mathbb{N}\}$ is bounded. This implies that $\bigcup_{k\in\mathbb{N}}\partial h_1(x^k+v^k)$, $\bigcup_{k\in\mathbb{N}}\partial h_2(x^k)$, $\bigcup_{k\in\mathbb{N}}\partial f(x^k)$ and $\left\{\left(\nabla r(x^k),\nabla g(x^k)\right):k\in\mathbb{N}\right\}$ are bounded, since $h_1$, $h_2$ are convex, $f$ is weakly convex and $\nabla r$, $\nabla g$ are locally Lipschitz continuous. In addition, $g(x^k)\geq m_g$ for all $k\in\mathbb{N}$ with $m_g$ defined by Lemma~\ref{lemsec6} (i). Therefore, these together with \eqref{eq:lemma61} lead to the boundedness of $\{\mathrm{D}\Phi_k(x^k)^{*}(s^k):k\in\mathbb{N}\}$. Finally, we show $\{s^k:k\in\mathbb{N}\}$ is bounded by contradiction. Suppose $\{s^k:k\in\mathbb{N}\}$ is not bounded. Then, we obtain from the boundedness of $\{x^k:k\in\mathbb{N}\}$ that there exist subsequence $s^{k_j}$ and $x^{k_j}$ such that
		\begin{equation} \label{proof:lemma61:eq1}
			\lim\limits_{j\to\infty} \|s^{k_j}\| = \infty \quad \mbox{and} \quad \lim\limits_{j\to\infty} x^{k_j}=x^{*},
		\end{equation}
		where $x^{*}\in\mathcal{M}$ due to Theorem~\ref{theorem:FunctionConvergence} (iii). In view of Definition~\ref{definition:embeddedSubmanifold}, there exist $J>0$ and $\Phi_{*}:\mathbb{R}^n\to\mathbb{R}^d$ such that $\Phi_{*}$ is a local defining function for $\mathcal{M}$ at $x^{*}$ and $x^{k_j}$ for all $j>J$. Then, \eqref{proof:lemma61:eq1} together with the fact that $\mathrm{rank}~\mathrm{D}\Phi_{*}(x^{k_j})=\mathrm{rank}~\mathrm{D}\Phi_{*}(x^{*})=d$ for all $j>J$ yields that 
		\begin{displaymath}
			\lim\limits_{j\to\infty} \|\mathrm{D}\Phi_{*}(x^{k_j})^{*}(s^{k_j})\| = \infty
		\end{displaymath}
		due to the smoothness of $\mathrm{D}\Phi_{*}$. This contradicts to that $\{ \mathrm{D}\Phi_k(x^k)^{*}(s^k): k\in\mathbb{N} \}$ is bounded for $\Phi_k$ being any local defining function for $\mathcal{M}$ at $x^k$. We then complete the proof immediately.
	\end{proof}

	Our sequential convergence analysis for MPGSA necessitates the following assumption on the function $f$ of problem \eqref{problemn1}. 
	\begin{assumption} \label{assumption:Chapter6}
		The function $f$ is convex on $\mathbb{R}^n$ and $f^{*}$ is continuous on $\mathrm{dom}\left( f^{*}\right)$.
	\end{assumption}
	
	Under Assumption~\ref{assumption:Chapter6}, we further introduce an auxiliary function $H:\mathbb{R}^n\times\mathbb{R}^n\times\mathbb{R}^n\times\mathbb{R}^n\times\mathbb{R}\to(-\infty,+\infty]$, defined at ($x$, $y$, $z$, $v$, $c$) as
	\begin{equation} \label{eq:definition:H}
		H(x,y,z,v,c) := Q(x+v,y,z,c) + \iota_{\mathrm{T}\mathcal{M}}(x,v) + \tilde{L}\|v\|^2,
	\end{equation}
	where $\tilde{L} := \overline{L} + \beta$ with $\beta$ will be defined in Lemma~\ref{lemma:Q_property} (ii) and
	\begin{equation} \label{eq:defenition:tildeL}
		\overline{L} := L_{h_1} + L_{h_2} + L_r + 2M_1L_{f} + M_1^2(M_fL_g + M_g'L_f)
	\end{equation}
	with $L_{h_1}$, $L_{h_2}$, $L_r$, $L_{f}$, $L_g$, $M_1$, $M_f$ and $M_g'$ defined by Proposition~\ref{proposition:FiniteTforlinesearch}, Lemma~\ref{lem1} (i) and Lemma~\ref{lemsec6} (ii), $\iota_{\mathrm{T}\mathcal{M}}$ is the indicator function on $\mathrm{T}\mathcal{M}$, that is,
	\begin{displaymath}
		\iota_{\mathrm{T}\mathcal{M}}(x,v)= \begin{cases}
			0 &,\quad \text{if }(x,v)\in\mathrm{T} \mathcal{M},\\
			+\infty&,\quad \text{otherwise},
		\end{cases}
	\end{displaymath}
	and $Q:\mathbb{R}^n\times\mathbb{R}^n\times\mathbb{R}^n\times\mathbb{R}\to (-\infty,+\infty]$ is defined at ($x$, $y$, $z$, $c$) as $Q(x,y,z,c)=+\infty$ if $y \notin \mathrm{dom} \left(f^{*}\right)$, otherwise,
	\begin{displaymath}
		Q(x,y,z,c) := (h_1+r)(x) + h_2^{*}(z) - \langle x,z \rangle + \Big( \langle x,y \rangle - f^*(y) \Big)\left(c^2g(x) - 2c\right).
	\end{displaymath}
	
	Next, we present several properties regarding the function $Q$ and the sequence $\{\left(x^k,y^k,z^k\right):k\in\mathbb{N}\}$ generated by MPGSA in the next lemma, which play important roles in our analysis. 
	
	\begin{lemma}\label{lemma:Q_property}
		Suppose that Assumption~\ref{assumption:section4:compactSetX} and Assumption~\ref{assumption:Chapter6} hold. Let $\{(x^k,y^k,z^k,v^k):k\in \mathbb{N}\}$ be generated by MPGSA. Let $c_k := 1/g(x^k)$ for $k\in\mathbb{N}$.Then the following statements hold for all $k\in \mathbb{N}$:
		\begin{enumerate}[label = {\upshape(\roman*)}]
			\item  $Q(x^k,y^k,z^k,c_k) = F(x^k)$;
			\item There exists a positive constant $\beta \geq 0$ such that
			\begin{equation}\label{lem7:eq1}
				Q(x^k, y^k, z^k, c_k) \leq Q(x^k, y^k, z^k, c_{k-1}) + \beta\|v^{k-1}\|^2.
			\end{equation}
			Furthermore, if for some $K>0$ such that $f(x^k)\geq 0$ for all $k\geq K$, then \eqref{lem7:eq1} holds for all $k\geq K$ with $\beta=0$.
		\end{enumerate}
	\end{lemma}
	\begin{proof}
		Under Assumption~\ref{assumption:Chapter6},  it follows from \cite[Proposition 16.10]{Bauschke-Combettes:HilbertSpaces:2017}, $y^k \in \partial f(x^k) $ and $z^k \in \partial h_2(x^k)$ that $f(x^k) + f^*(y^k)=\langle x^k,y^k\rangle$ as well as $h_2(x^k) + h_2^*(z^k)=\langle x^k,z^k\rangle$. These directly imply item (i) due to $c_k=1/g(x^k)$. We next prove Item (ii). By simple calculations, we have 
		\begin{equation*}
			\begin{aligned}
				|Q(x^k, y^k, z^k, c_k) - Q(x^k, y^k, z^k, c_{k-1})| &= |-f(x^k)\left(\left(2c_k-c_k^2g(x^k)\right) - \left(2c_{k-1}-c_{k-1}^2g(x^k)\right) \right)|\\ &= |-f(x^k)g(x^k)\left( \frac{1}{g(x^k)} - \frac{1}{g(x^{k-1})} \right)^2| \\ &\leq M_fM_gL_{1/g}^2\|x^k-x^{k-1}\|^2 \\ &\leq M_fM_gL_{1/g}^2W_1^2\|v^k\|^2,
			\end{aligned}
		\end{equation*}
		where the last inequality comes from $x^k = \mathrm{Retr}_{x^{k-1}}(\alpha_{k-1}v^{k-1})$, Lemma~\ref{lemma:Retr_property} (i) as well as $\alpha_{k-1}\leq1$, $M_f$, $M_g$ and $L_{1/g}$ are defined by Lemma~\ref{lemsec6}. This indicates \eqref{lem7:eq1} with $\beta:=M_fM_gL_{1/g}^2 W_1^2$. If $f(x^k) \geq 0$, it follows from $c_k \in \arg\min\{f(x^k)\left(g(x^k)c^2-2c\right):c\in\mathbb{R}\}$ that $Q(x^k, y^k, z^k, c_k) \leq Q(x^k, y^k, z^k, c_{k-1})$, which implies $\beta =0$. We then complete the proof. 
	\end{proof}
	
	With the help of the above properties, we obtain the following key proposition.
	
	\begin{proposition} \label{Case1:Prop1}
		Suppose that Assumption~\ref{assumption:section4:compactSetX} and Assumption~\ref{assumption:Chapter6} hold. Let $\{(x^k,y^k,z^k,v^k):k\in \mathbb{N}\}$ be generated by MPGSA. Let  $c_k:=1/g(x^k)$ for $k\in\mathbb{N}$. Let the parameter $\overline{t}<\min\left\{\frac{1}{2c}, \frac{1}{2\tilde{c}}, \frac{m_g}{2L_g'(L_1+M_2)}\right\}$, where $c$ is defined in \eqref{eq:definition_c} and
		\begin{align}\label{prop8:eq0}
			\tilde{c}:= L_{h_1}+L_{h_2}+\frac{L_{\nabla r}+2L_r}{2}+\frac{M_1^2M_f (L_{\nabla g}+2L_g)}{2} +M_1^2M_g'L_f + 2M_1L_f + M_1^2M_yL_g' + \beta
		\end{align}
		with $M_f$, $m_g$, $M_g'$, $M_y$ and $L_g'$ being defined in Lemma~\ref{lemsec6}, $L_{h_1}$, $L_{h_2}$, $L_{\nabla r}$, $L_r$, $L_f$ and $L_g$ being defined in the proof of Proposition~\ref{proposition:FiniteTforlinesearch}, $L_1$ being defined in Assumption~\ref{ass1} (ii), $\beta$ being defined in Lemma~\ref{lemma:Q_property} (ii), and $M_1$, $M_2$ being defined in Lemma~\ref{lem1}. Then the following statements hold:
		\begin{enumerate}[label = {\upshape(\roman*)}]
			\item There exists $a>0$ such that
			\begin{displaymath}
				H(x^k,y^k,z^k,v^k,c_k) \leq H(x^{k-1},y^{k-1},z^{k-1},v^{k-1},c_{k-1}) - a \|v^k\|^2;
			\end{displaymath}
			\item There exists $b>0$ such that
			\begin{displaymath}
				\mathrm{dist} \left( 0_{4n +1},\partial H(x^k,y^k,z^k,v^k,c_k) \right) \leq b \|v^k\|.
			\end{displaymath}
		\end{enumerate}
	\end{proposition}
	\begin{proof}
		We first prove Item (i). Corollary~\ref{corollary:FiniteTforlinesearch} indicates that $\alpha_k=1$ and $x^{k+1}=\mathrm{Retr}_{x^k}(v^k)$ for $k\in\mathbb{N}$. Thanks to Lemma~\ref{lemma:Retr_property} (ii), there exist $L_{\nabla r}$, $L_{\nabla g}>0$ such that
		\begin{align}
			\label{prop8:eq1}
			r(x^k+v^k) &\leq r(x^k) + \langle \nabla r(x^k),v^k \rangle + \frac{L_{\nabla r}}{2} \|v^k\|^2, \\
			\label{prop8:eq2}
			\frac{ L_{\nabla g}}{2} \|v^k\|^2 &\geq \left|g(x^k+v^k) - g(x^k) - \langle \nabla g(x^k),v^k \rangle\right|.
		\end{align}
		Then we obtain from $y^k \in \partial f(x^k)$ and $c_k = 1/g(x^k)$ that 
		\begin{equation}\label{prop8:eq3}
			\begin{aligned}
				\left( f^*(y^k)-\langle x^k +v^k, y^k \rangle \right)\left(2c_k-c_k^2g(x^k+v^k)\right)&=\left( f^*(y^k)-\langle x^k, y^k \rangle\right)\left(2c_k-c_k^2g(x^k)\right)-\langle v^k, \frac{y^k}{g(x^k)}\rangle\\&-c_k^2\left(f(x^k)+\langle v^k, y^k\rangle\right)\left(g(x^k)-g(x^k+v^k)\right).
			\end{aligned}
		\end{equation}
		It follows from \eqref{prop8:eq2} that
		\begin{equation}\label{prop:eq4}
			\begin{aligned}
				-c_k^2f(x^k)\left(g(x^k)-g(x^k+v^k)\right)&=c_k^2f(x^k)\left(g(x^k+v^k)-g(x^k)-\langle \nabla g(x^k), v^k\rangle\right) \\&+ \frac{f(x^k)}{g^2(x^k)}\langle\nabla g(x^k), v^k\rangle\\ &\leq \frac{f(x^k)}{g^2(x^k)}\langle\nabla g(x^k), v^k\rangle+\frac{M_1^2M_fL_{\nabla g}}{2}\|v^k\|^2.
			\end{aligned}
		\end{equation}
		Moreover, we can directly obtain that
		\begin{equation}\label{prop8:eq5}
			\begin{aligned}
				-c_k^2\langle v^k, y^k\rangle\left(g(x^k)-g(x^k+v^k)\right) 	&\leq c_k^2\left|g(x^k + v^k)-g(x^k)\right|\|y^k\|\|v^k\|\\ &\leq M_1^2M_yL_g'\|v^k\|^2.
			\end{aligned}
		\end{equation}
		Combining \eqref{proof:proposition:FiniteTforlinesearch:eq49}, \eqref{prop8:eq1} and \eqref{prop8:eq3}-\eqref{prop8:eq5}, we obtain from $z^k\in\partial h_2(x^k)$ that
		\begin{equation}	\label{proof:lem52-item1:neq4}
			\begin{aligned}
				Q(x^k+v^k,y^k,z^k,c_k) &\leq (h_1+r)(x^k)+\left(c_k^2g(x^k) - 2c_k\right)\left( \langle x^k,y^k \rangle - f^{*}(y^k) \right)\\
				& + h_2^{*}(z^k) - \langle x^k,z^k \rangle  - a_k \|v^k\|^2 \\
				&= Q(x^k,y^k,z^k,c_k) - a_k \|v^k\|^2,
			\end{aligned}
		\end{equation}
		where $a_k:=\frac{1}{t_k}-\frac{L_{\nabla r}}{2}-M_1^2M_yL_g'-\frac{M_1^2M_fL_{\nabla g}}{2}$. Noting that $\overline{t}< \frac{m_g}{2L_g'(L_1+M_2)}$, we obtain from Lemma~\ref{lem1} (iii) and Theorem~\ref{theorem:FunctionConvergence} (iii) that
		\begin{equation}\label{prop8:eq7}
			\begin{aligned}
				2c_k-c_k^2g(x^k+v^k) &= \frac{g(x^k)+\left(g(x^k)-g(x^k+v^k)\right)}{g^2(x^k)}\\&\geq \frac{m_g-L_g'\|v^k\|}{g^2(x^k)}\\&\geq \frac{m_g-2\overline{t}L_g'(L_1+M_2)}{g^2(x^k)}>0.
			\end{aligned}
		\end{equation}
		This together with the Fenchel-Young inequality \cite[Proposition 13.15]{Bauschke-Combettes:HilbertSpaces:2017} leads to 
		\begin{align}\label{prop8:eq8}
			(h+r+c_k^2fg - 2c_k f)(x^k + v^k) \leq  Q(x^k +v^k, y^k, z^k, c_k).
		\end{align}
		It follows from Lemma~\ref{lemma:Retr_property} (ii) and $x^k=\mathrm{Retr}_{x^{k-1}}(v^{k-1})$ that
		\begin{equation}\label{prop8:eq6}
			\begin{aligned}
				f(x^{k})g(x^{k}) - f(x^{k-1} + v^{k-1})g(x^{k-1} + v^{k-1}) &= 	f(x^{k})\left(g(x^{k})-g(x^{k-1} + v^{k-1})\right)\\ &+g(x^{k-1} + v^{k-1})\left(f(x^{k})-f(x^{k-1} + v^{k-1})\right) \\&\leq \left(M_fL_g+M_g'L_f\right)\|v^{k-1}\|^2.
			\end{aligned} 
		\end{equation}
		Invoking $y^k\in\partial f(x^k)$ and $z^k \in \partial h_2(x^k)$, we obtain from Lemma~\ref{lemma:Retr_property} (ii), \eqref{prop8:eq8} and \eqref{prop8:eq6} that
		\begin{equation} \label{proof:lem52-item1:neq5}
			\begin{aligned}
				Q(x^k,y^k,z^k,c_{k-1}) &= (h+r+c_{k-1}^2fg-2c_{k-1}f)(x^k) \\
				&\leq (h+r+c_{k-1}^2fg-2c_{k-1}f)(x^{k-1}+v^{k-1}) + \overline{L} \|v^{k-1}\|^2 \\
				&\leq Q(x^{k-1}+v^{k-1},y^{k-1},z^{k-1},c_{k-1}) + \overline{L}\|v^{k-1}\|^2,
			\end{aligned}
		\end{equation}
		where $\overline{L}$ is defined in \eqref{eq:defenition:tildeL}. Lemma~\ref{lemma:Q_property} (ii), \eqref{proof:lem52-item1:neq4} and \eqref{proof:lem52-item1:neq5} lead to 
		\begin{equation} \label{proof:lem52-item1:neq6}
			\begin{aligned}
				Q(x^k+v^k,y^k,z^k,c_k) &\leq Q(x^{k-1}+v^{k-1},y^{k-1},z^{k-1},c_{k-1}) + (\overline{L} + \beta)\|v^{k-1}\|^2 - a_k\|v^k\|^2 \\ &= Q(x^{k-1}+v^{k-1},y^{k-1},z^{k-1},c_{k-1}) + \tilde{L}\|v^{k-1}\|^2 - a_k\|v^k\|^2.
			\end{aligned}
		\end{equation}
		In view of $x^k \in \mathcal{M}$ and $v^k \in \mathrm{T}_{x^k}\mathcal{M}$ for all $k\in\mathbb{N}$, \eqref{proof:lem52-item1:neq6} implies that
		\begin{displaymath}
			\begin{aligned}
				H(x^k,y^k,z^k,v^k,c_k) &\leq H(x^{k-1},y^{k-1},z^{k-1},v^{k-1},c_{k-1})	\\ 
				&- (a_k-\tilde{L})\|v^k\|^2	\\
				&\leq H(x^{k-1},y^{k-1},z^{k-1},v^{k-1},c_{k-1}) - a\|v^k\|^2,
			\end{aligned}
		\end{displaymath}
		where $a:=\left( \frac{1}{\overline{t}}-\frac{L_{\nabla r}}{2}-M_1^2M_yL_g'-\frac{M_1^2M_fL_{\nabla g}}{2}-\tilde{L} \right)$. It is obvious that $a>0$ when $\overline{t}<\frac{1}{2\tilde{c}}$ with $\tilde{c}$ defined in \eqref{prop8:eq0}. Then, we get Item (i) immediately.
		
		We next show Item (ii). When $\mathcal{M}$ is an open submanifold of $\mathbb{R}^n$, $\mathrm{T}\mathcal{M}$ is an open submanifold of $\mathbb{R}^n \times \mathbb{R}^n$ \cite[Theorem 3.43]{Boumal:Intro_Opt_Man:2023}. In this case $\mathrm{N}_{(x^k,v^k)}\mathrm{T}\mathcal{M}=\{0_{2n}\}$ and set $\Phi_k=0$ and $s^k=0$ for $k\in\mathbb{N}$. Otherwise, the  local defining function for $\mathcal{M}$ at $x^k$ can be chosen from the set of finite local defining functions $\Phi:=\{\varphi_1,\cdots,\varphi_q:\varphi_i:\mathbb{R}^n\to\mathbb{R}^d,~i=1,\cdots,q\}$ due to the boundedness of $\{x^k:k\in\mathbb{N}\}$. \cite[Theorem 3.43]{Boumal:Intro_Opt_Man:2023} tells us that $\mathrm{T}\mathcal{M}$ is an embedded submanifold of $\mathbb{R}^n\times\mathbb{R}^n$ and $\hat{\Phi}_k:\mathbb{R}^n\times\mathbb{R}^n\to\mathbb{R}^{2d}$ defined at ($x$, $v$) as
		$\hat{\Phi}_k(x,v) = \begin{bmatrix}
			\Phi_k(x)  \\
			\mathrm{D}\Phi_k(x)(v)
		\end{bmatrix}$ is a local defining function for $\mathrm{T}\mathcal{M}$ at ($x^k$, $v^k$), where $\Phi_k\in\Phi$ is a local defining function for $\mathcal{M}$ at $x^k$. One can check from the proof of \cite[Theorem 3.43]{Boumal:Intro_Opt_Man:2023} that 
		\begin{displaymath}
			\mathrm{D}\hat{\Phi}_k(x^k,v^k)
			\begin{pmatrix}
				x\\
				v
			\end{pmatrix} = \begin{bmatrix}
				\mathrm{D}\Phi_k(x^k) & 0\\
				\mathrm{D}\left(\mathrm{D}\Phi_k(\cdot)(v^k)\right)(x^k) & \mathrm{D}\Phi_k(x^k)
			\end{bmatrix}\begin{pmatrix}
				x \\
				v
			\end{pmatrix}.
		\end{displaymath}
		Therefore, 
		\begin{displaymath}
			\begin{aligned}
				\mathrm{N}_{(x^k,v^k)}\mathrm{T}\mathcal{M} &= \{ \mathrm{D}\hat{\Phi}_k(x^k,v^k)^{*}(s): s\in\mathbb{R}^{2d} \} \\
				&= \left\{ \begin{bmatrix}
					\mathrm{D}\Phi_k(x^k)^{*}(s_1) + \mathrm{D}\left( \mathrm{D}\Phi_k(\cdot)(v^k) \right)(x^k)^{*}(s_2)\\
					\mathrm{D}\Phi_k(x^k)^{*}(s_2)
				\end{bmatrix} : s_1,~s_2\in\mathbb{R}^d \right\}.
			\end{aligned}
		\end{displaymath}
		Let $\{s^k\in\mathbb{R}^d:k\in\mathbb{N}\}$ be the sequence in Lemma~\ref{lemma:Chapter6:lem62}. We set $u^k=\times_{i=1}^{5}u_i^k$ with
		\begin{displaymath}
			\begin{aligned}
				u_1^k &:= (\partial h_1 + \nabla r)(x^k+v^k) - z^k + c_k^2 \nabla g(x^k+v^k)\left(\langle x^k + v^k,y^k\rangle - f^*(y^k)\right) \\&+ c_k^2g(x^k+v^k)y^k - 2c_ky^k  + \mathrm{D} \Phi(x^k)^{*}(s^k) + \mathrm{D}\left(\mathrm{D}\Phi_k(\cdot)(v^k) \right)(x^k)^{*}(s^k), \\
				u_2^k &:=\left(c_k^2g(x^k+v^k)-2c_k\right)\left(x^k+v^k-\partial f^*(y^k)\right),	\\
				u_3^k &:= -x^k-v^k+\partial h_2^{*}(z^k),	\\
				u_4^k &:=(\partial h_1 + \nabla r)(x^k+v^k) - z^k + c_k^2 \nabla g(x^k+v^k)\left(\langle x^k + v^k,y^k\rangle - f^*(y^k)\right) \\&+ c_k^2g(x^k+v^k)y^k - 2c_ky^k + \mathrm{D} \Phi(x^k)^{*}(s^k) + 2 \tilde{L}v^k,	\\
				u_5^k &:= \left(2c_kg(x^k+v^k)-2\right)\left(\langle x^k + v^k,y^k\rangle - f^*(y^k)\right).
			\end{aligned}
		\end{displaymath}
		Since $\mathrm{T}\mathcal{M}$ is an embedded submanifold of $\mathbb{R}^n\times\mathbb{R}^n$, $\mathrm{T}\mathcal{M}$ is regular. It is not hard to verify that \cite[Proposition 2.2]{Zhang-Li:SIOPT:2022} $\widehat{\partial}(\varphi_1\varphi_2)(\overline{x})=\varphi_2(\overline{x})\nabla\varphi_1(\overline{x})+\varphi_1(\overline{x})\widehat{\partial}\varphi_2(\overline{x})$ when $\varphi_1:\mathbb{R}^n\to\mathbb{R}$ is continuously differentiable at $\overline{x}$ with $\varphi_1(\overline{x})>0$ and $\varphi_2:\mathbb{R}^n\to(-\infty,+\infty]$ is continuous at $\overline{x}$ relative to $\mathrm{dom}(\varphi_2)$, and here we suppose $\varphi_1\varphi_2(x)=+\infty$ when $x \notin \mathrm{dom}(\varphi_2)$. It is also obtained from \cite[Proposition 2.1]{Attouch-Bolte-Redont-Soubeyran:MOR:2010} that $\widehat{\partial}\varphi(x,y)=\widehat{\partial}\varphi_1(x)\times\widehat{\partial}\varphi_2(y)+\nabla\varphi_3(x,y)$, for $\varphi:\mathbb{R}^n\times\mathbb{R}^m\to(-\infty,+\infty]$ defined at $(x,y)$ as $\varphi(x,y)=\varphi_1(x)+\varphi_2(y)+\varphi_3(x,y)$ with $\varphi_1:\mathbb{R}^n\to(-\infty,+\infty]$, $\varphi_2:\mathbb{R}^m\to(-\infty,+\infty]$ being proper lower semicontinuous and $\varphi_3:\mathbb{R}^n\times\mathbb{R}^m\to\mathbb{R}$ being continuously differentiable. These subdifferential calculus together with \eqref{prop8:eq7}, Assumption~\ref{ass1} and Assumption~\ref{assumption:Chapter6} yield that $u^k\subseteq \widehat{\partial} H(x^k,\tilde{y}^k,z^k,v^k,c_k)$. We finally show $\mathrm{dist}(0_{4n+1},u^k)\leq b\|v^k\|$ for some $b>0$ for all $k\in\mathbb{N}$. It suffices to show that there exist $b_i>0$ such that $\mathrm{dist}(0_n,u_i^k)\leq b_i\|v^k\|$ for $k\in\mathbb{N}$ and $i=1,\cdots,5$. Since $\Phi_k\in\Phi$ and $\Phi$ has finite elements, we have $\|\mathrm{D}\left(\mathrm{D}\Phi_k(\cdot)v^k\right)(x^k)^{*}(s^k) \|\leq \hat{b}\|v^k\|$ for some $\hat{b}>0$ from the fact that $\Phi_k$ is infinitely differentiable and the fact that $\{(x^k,s^k):k\in\mathbb{N}\}$ is bounded due to Theorem~\ref{theorem:FunctionConvergence} and Lemma~\ref{lemma:Chapter6:lem62}. The convexity of $f$ and $y^k\in\partial f(x^k)$ imply that $\langle x^k+v^k,y^k\rangle-f^*(y^k)=f(x^k)+\langle v^k,y^k\rangle$ and $\{y^k:k\in\mathbb{N}\}$ is bounded due to Theorem~\ref{theorem:FunctionConvergence} (iii). This together with Lemma~\ref{lemma:Chapter6:lem62} and Lemma~\ref{lem1} (i), as well as the local Lipschitz differentiability of $\nabla r$ and $\nabla g$, yields that $\mathrm{dist}(0,u_1^k)\leq b_1\|v^k\|$ for some $b_1>0$ and for all $k\in\mathbb{N}$ due to $\lim\limits_{k\to\infty}\|v^k\|=0$. Similarly, there exists $b_4>0$ such that $\mathrm{dist}(0_n,u_4^k)\leq b_4\|v^k\|$ for $k\in\mathbb{N}$. Invoking $y^k \in \partial f(x^k)$ and $z^k\in\partial h_2(x^k)$, it follows from Lemma~\ref{lem1} (i) that $\mathrm{dist}(0_n,u_2^k)$, $\mathrm{dist}(0_n,u_3^k)\leq b_2\|v^k\|$, for some $b_2>0$ and for all $k\in\mathbb{N}$. Thanks to $g\geq m_g$ on $\mathcal{M}$, the locally Lipschitz continuity of $g$, and the fact that $\{y^k\in\partial f(x^k):k\in\mathbb{N}\}$ is bounded due to the convexity of $f$ and the boundedness of $\{x^k:k\in\mathbb{N}\}$, we deduce that $\mathrm{dist}(0,u_5^k)\leq b_5\|v^k\|$ for some $b_5>0$ and for all $k\in\mathbb{N}$. We complete the proof immediately.
	\end{proof}

	Now, we are ready to establish the main result of this section.
	\begin{theorem} \label{theorem:sequentialConvergence}
		Suppose Assumption~\ref{assumption:section4:compactSetX} and Assumption~\ref{assumption:Chapter6} hold. Let $\{x^k:k\in\mathbb{N}\}$ be generated by MPGSA and $\Omega$ be the set of all accumulation points of $\{x^k:k\in\mathbb{N}\}$. If $\overline{t}<\min\left\{\frac{1}{2c}, \frac{1}{2\tilde{c}}, \frac{m_g}{2L_g'(L_1+M_2)}\right\}$ with $c$, $\tilde{c}$, $m_g$, $L_g'$, $L_1$ and $M_2$ being defined as the same as those in Proposition~\ref{Case1:Prop1} and $H$ being defined in \eqref{eq:definition:H} satisfies the KL property at every point of $\Omega \times \mathrm{dom}(\partial f^*) \times \mathrm{dom}(\partial h_2^*) \times \mathbb{R}^n \times \mathbb{R}$, then $\sum\limits_{k=1}^{+\infty}\|v^k\|<+\infty$ and $\{x^k:k\in\mathbb{N}\}$ converges to a critical point $x^*$ of problem \eqref{problemn1}. 
	\end{theorem}
	\begin{proof}
		According to Proposition~\ref{Case1:Prop1} (i), we have $\lim\limits_{k\to\infty} H(u^k)=H_{*}$ for some $H_{*}\in\mathbb{R}$ from Lemma~\ref{lemma:Q_property} (i) and the fact that problem \eqref{problemn1} has a solution, where $u^k:=(x^k, y^k, z^k, v^k, c_k)$. Theorem~\ref{theorem:FunctionConvergence} (ii)-(iii) and Assumption~\ref{ass1} indicate that $\{u^k:k\in\mathbb{N}\}$ is bounded. This implies that the set of all accumulation points of $\{u^k:k\in\mathbb{N}\}$, denoted by $U$, is compact. Then, it is not hard to deduce that $H(u^{*})=H_{*}$ for all $u^{*}\in U$, thanks to the continuity of $h$, $r$, $g$ and $f$ as well as the outer semicontinuous of $\partial h_2$ and $\partial f$ \cite[Proposition 8.7]{Rockafellar-Wets:VariationalAnalysis:2009}. Thus, invoking Lemma~\ref{lemma:Uniformized_KL_property}, $H$ satisfies the uniformized KL property on the compact set $U$. As the remaining proof is rather standard, we omit it here and refer the readers to \cite{Attouch:MP:2009,Attouch-Bolte-Redont-Soubeyran:MOR:2010,Attouch:MP:2013}.
	\end{proof}
	
	Next, we show that when $f$ is locally differentiable on $\mathbb{R}^n$, we can also derive the sequential convergence of MPGSA without Assumption~\ref{assumption:Chapter6}. In this case, $\frac{f}{g}$ is locally Lipschitz differentiable on an open set containing $\mathcal{M}$. Thus, by setting $\widetilde{H}:\mathbb{R}^n\times\mathbb{R}^n\times\mathbb{R}^n\to(-\infty,+\infty]$ as
	\begin{equation}  \label{eq:ChapterEnd:star}
		\widetilde{H}(x,z,v) := \left( h_1+r+\frac{f}{g} \right)(x+v)+h_2^{*}(z)-\langle x+v,z \rangle + \iota_{\mathrm{T}\mathcal{M}}(x,v) + \tilde{L}\|v\|^2
	\end{equation}
	and following a similar line of arguments to the analysis in Proposition~\ref{Case1:Prop1} and Theorem~\ref{theorem:sequentialConvergence}, we can obtain the following theorem. Due to the limitation of space, we omit the detail of the proof.
	\begin{theorem}\label{111111}
		Suppose Assumption~\ref{assumption:section4:compactSetX} holds and $f$ is locally Lipschitz differentiable on $\mathbb{R}^n$. Let $\{x^k:k\in\mathbb{N}\}$ be generated by MPGSA.  Let $\{x^k:k\in\mathbb{N}\}$ be generated by MPGSA and $\Omega$ be the set of all accumulation points of $\{x^k:k\in\mathbb{N}\}$. If $\overline{t}<\min\left\{\frac{1}{2c}, \frac{1}{2\tilde{c}}, \frac{m_g}{2L_g'(L_1+M_2)}\right\}$ with $c$, $\tilde{c}$, $m_g$, $L_g'$, $L_1$ and $M_2$ being defined as the same as those in Proposition~\ref{Case1:Prop1} and $\widetilde{H}$ defined in \eqref{eq:ChapterEnd:star} satisfies the KL property at every point of $\Omega \times\mathrm{dom}(\partial h_2^*)\times\mathbb{R}^n$, then $\{x^k:k\in\mathbb{N}\}$ converges to a critical point problem \eqref{problemn1}.
	\end{theorem}
	
	To close this section, we demonstrate the sequential convergence of MPGSA when applied to the application examples introduced in Section~\ref{sec:intro}. Clearly, the numerator function $f$ in each example not only satisfies Assumption~\ref{ass1} but is also locally Lipschitz differentiable on $\mathbb{R}^n$. Therefore, for simplicity, we will base our argument on Theorem~\ref{111111} rather than Theorem~\ref{theorem:sequentialConvergence}. Recall that in these examples, the underlying manifold $\mathcal{M}$ is the Stiefel manifold $\mathcal{S}_{n,p}$, whose tangent bundle $\mathrm{T}\mathcal{S}_{n,p}$ is given by
	$
	\{(X,V)\in\mathbb{R}^{n\times p}\times\mathbb{R}^{n\times p}:X^TX=I_p,\ X^TV+V^TX=0\}
	$.
	Thanks to the compactness of $\mathcal{S}_{n,p}$, Assumption~\ref{assumption:section4:compactSetX} is automatically satisfied by MPGSA with any initial point. Furthermore, $\mathrm{T}\mathcal{S}_{n,p}$ is a semialgebraic set, and the functions $f$, $g$, $r$, $h_1$ and $h_2^*$ are all semialgebraic. Since the sum or quotient of two semialgebraic functions remains semialgebraic, and the indicator function of a semialgebraic set is also semialgebraic, it follows that the auxiliary function $\widetilde{H}$ is semialgebraic in each example, and hence is a KL function. In light of the above reasoning and by Theorem~\ref{111111}, we immediately obtain the sequential convergence of MPGSA for all three application examples from Section~\ref{sec:intro}.

\section{Numerical experiments.}
	In this section, we test the performance of our proposed algorithms, namely, MPGSA and EMPGSA, by applying them to the sparse FDA problem \eqref{model:SFDA} with two special choices of regularization functions. The first one is the $\ell_{1}$ regularized problem
	\begin{equation} 	\label{model:SFDA_l21}
		\min\limits_{X \in \mathcal{S}_{n,p}} \left\{ \lambda \|X\|_{1}-\frac{\mathrm{tr}(X^TA_b X)}{\mathrm{tr}(X^TA_w X)} \right\},
	\end{equation}
	where the $\ell_{1}$ norm $\|\cdot\|_{1}$ is defined as $\|X\|_{1}=\sum_{i=1}^{n} \sum_{j=1}^{p}|x_{ij}|$. Another example is the partial $\ell_1$ function regularized problem
	\begin{equation} 	\label{model:PartialL1functionRegularizedproblem}
		\min\limits_{X \in \mathcal{S}_{n,p}} \left\{ \lambda \|X\|_{1} - \lambda\|X\|_{(K)}-\frac{\mathrm{tr}(X^TA_b X)}{\mathrm{tr}(X^TA_w X)} \right\},
	\end{equation}
	where $1\leq K\leq n$ is an integer and for $X\in\mathbb{R}^{n\times p}$, $\|X\|_{(K)}$ denotes the sum of the $K$ largest absolute values of entries in $X$. We compare MPGSA with the semismooth Newton based augmented Lagrangian method ALMSN \cite{Zhou-Bao-Ding-Zhu:MP:2023} for solving problem \eqref{model:SFDA_l21}. It is compared in our experiments that two versions of ALMSN, namely, ALMSN1 and ALMSN2, which globalize the semismooth Newton method involved in different ways. Since there is no existing method available for directly solving problem \eqref{model:PartialL1functionRegularizedproblem} to the best of our knowledge, we only test the efficiency of MPGSA and EMPGSA for solving this example. All the numerical tests are conducted on a HP desktop with a 3.0 GHz Intel core i5 and 16GB of RAM, equipped with MATLAB R2022a.
	
	We first describe the implementation details of the aforementioned algorithms. For MPGSA and EMPGSA, we set $\gamma=0.5$ and choose $t_k$ to be the Barzilai-Borwein (BB) stepsize \cite{Barzilai-Borwein:IMAJNumAnal:1988} as follows
	\begin{align*}
		t_k = \left\{
		\begin{array}{ll} 
			\max\left\{\underline{t}, \min\left\{\overline{t},\dfrac{\|\Delta x^k\|^2}{|\langle\Delta x^k, \Delta l^k \rangle|}\right\}\right\} &,  \langle\Delta x^k, \Delta l^k \rangle \neq 0;\\
			\overline{t}  &, \langle\Delta x^k, \Delta l^k \rangle = 0,
		\end{array}
		\right. 
	\end{align*}
	where $\Delta x^k = x^k - x^{k-1}$, $\Delta l^k = \nabla l_k(x^k) - \nabla l_k(x^{k-1})$, $l_k(x) = r(x) + \frac{f(x^k)}{g^2(x^k)}g(x)$. Also, we choose $\eta=10^{-8}$ for EMPGSA. The MPGSA is terminated when the number of iterations exceeds 10000 or $\|v^k\|_F^2/t_k^2 < 10^{-8}np$ with $v^k$ being defined by \eqref{eqAlgV}, while the EMPGSA is terminated once the number of iterations hits 10000 or $\|v^{k,\hat{i}}\|_F^2/t_k^2 < 10^{-8}np$ with $v^{k,\hat{i}}$ being defined by \eqref{eq:subproblem_MPGSA_d} and Step 3 (c) in EMPGSA. Moreover, the subproblem \eqref{eqAlgV} in MPGSA and the subproblem \eqref{eq:subproblem_MPGSA_d} in EMPGSA are solved by the semismooth Newton method developed in \cite[Section 4.2]{Chen-Ma-Anthony-Zhang:SIOPT:2020}. For the parameters and termination conditions of ALMSN1 and ALMSN2, we simply adopt the suggested setting in \cite[Section 5.2]{Zhou-Bao-Ding-Zhu:MP:2023} but set the termination threshold to be $10^{-8}np$. Finally, we remark that for all the compared algorithms, the polar decomposition is used as the retraction mapping.
	
	In the first experiment, we simulate the matrices $A_b\in\mathbb{R}^{n\times n}$ and $A_w\in\mathbb{R}^{n\times n}$ of problem \eqref{model:SFDA_l21} and \eqref{model:PartialL1functionRegularizedproblem} following a similar setting to \cite[Section 5.1]{Tan-Wang-Liu-Zhang:JRSSSB:2018}, which uses sparse FDA for multi-class classification. Specifically, we consider a four-class classification example as follows. In the $i$-th class, we randomly generate 250 samples with $n= 500$ features following a Gaussian distribution $\mu^i\in\mathbb{R}^n$ and covariance $\Sigma\in \mathbb{R}^{n\times n}$. For $i\in\left\{ 1,2,3,4 \right\}$, we set $\mu_j^i=(i-1)/3$ for $j\in\left\{2,4,...,40\right\}$ and $\mu_j^i =0$ otherwise. Meanwhile, $\Sigma$ is chosen as a block diagonal matrix with five blocks, each of which is in the dimension of ($n/5$) $\times$ ($n/5$). The ($j$, $j^\prime$)th entry of each block is set as $0.8^{|j-j^\prime|}$. With these four classes of samples, we further let $A_b$ and $A_w$ be the associated between-class covariance and within-class covariance matrices. Next, the MPGSA, ALMSN1 and ALMSN2 are applied to solving problem \eqref{model:SFDA_l21}, while the MPGSA and EMPGSA are applied to solving problem \eqref{model:PartialL1functionRegularizedproblem}. We set $p=3$, fix the regularization parameter $\lambda= 0.21$ for problem \eqref{model:SFDA_l21} and choose $K=50,\ \lambda=0.22$ for problem \eqref{model:PartialL1functionRegularizedproblem}. All the algorithms are started from the same randomly chosen initial point on $\mathcal{S}_{n,p}$. Moreover, we evaluate the quality of the solutions obtained by each of compared algorithms through studying its associated classification accuracy as described below. First, we use the aforementioned samples as the training dataset and randomly generate 500 more samples for each class to form the testing dataset. Second, the samples in the training dataset are projected into the subspace spanned by the columns of the solution. Then the projected points are employed to train a support vector machine (SVM) with a linear kernel which is implemented using the LIBSVM toolbox \cite{chang2011libsvm}. Finally, we project the samples of the testing dataset into the subspace and apply the trained SVM to classifying these projected datapoints. The classification accuracy is measured by the number of the correctly classified samples over the total testing samples.

	\begin{table}[htbp]
			\caption{Computational results for the first experiment.}
		\centering
		\label{table:SFDA_table1}
			\begin{tabular}{|c|cccc|}
				\hline
				Alg. & Time & Accuracy & Sparsity & Iteration\\ \hline      
				MPGSA for \eqref{model:SFDA_l21} & 0.21 & 84.34\% & 0.901 & 141 \\
				ALMSN1 for \eqref{model:SFDA_l21} & 0.71 & 84.32\% & 0.903 & 53 \\ 
				ALMSN2 for \eqref{model:SFDA_l21} & 1.54 & 84.32\% & 0.903 & 61 \\                            
				MPGSA for \eqref{model:PartialL1functionRegularizedproblem} & 0.27 & 85.14\% & 0.905 & 228 \\                
				EMPGSA for \eqref{model:PartialL1functionRegularizedproblem} & 0.31 & 85.13\% & 0.905 & 229 \\
				\hline
		\end{tabular}%
	\end{table}

	Table \ref{table:SFDA_table1} summarizes the computational results of the compared algorithms averaged over 50 rounds of random tests. In detail, we report the CPU time for solving problem \eqref{model:SFDA_l21} or \eqref{model:PartialL1functionRegularizedproblem} (Time), the classification accuracy in percentage (Accuracy), the number of nonzero entries in the obtained solutions over $np=1500$ (Sparsity), the number of iterations (Iteration) averaged over 50 rounds of random tests. One can observe that the MPGSA and EMPGSA for solving \eqref{model:SFDA_l21} and \eqref{model:PartialL1functionRegularizedproblem} outperform the other algorithms in terms of CPU time. Moreover, the solutions found by the MPGSA and EMPGSA for solving \eqref{model:PartialL1functionRegularizedproblem} have a bit higher sparsity, while the classification accuracy of them are better than those of the other methods or solving \eqref{model:SFDA_l21}. We also observe that the classification accuracy of the MPGSA for solving \eqref{model:PartialL1functionRegularizedproblem} is almost the same as the EMPGSA. It is probably because in this experiment the MPGSA converges to a critical point where $\|\cdot\|_{(K)}$ is continuously differentiable, and thus this point is also a lifted B-stationary point as proved in Proposition \ref{proposition_28_definition}(\romannumeral6).
	
	In order to further compare MPGSA and EMPGSA, we conduct the second experiment by applying them to constructed instances of problem \eqref{model:PartialL1functionRegularizedproblem} with $p=1$. Given positive integers $n$, $K$ and a positive number $\lambda$, we generate a symmetric positive semidefinite matrix $A_b$, a symmetric positive definite matrix $A_w$, and a critical but not lifted B-stationary point $\overline{x}$ of problem \eqref{model:PartialL1functionRegularizedproblem} as follows. Specifically, we first produce a vector $\tilde{\xi}\in\mathbb{R}^K$ with components randomly drawn from the standard Gaussian distribution, and thus obtain a vector $\xi\in\mathbb{R}^K$ by sorting the components of $\xi$ such that $|\xi_1|\geq|\xi_2|\geq\cdots\geq|\xi_K|$. Then a vector $\overline{x}\in\mathbb{R}^n$ is generated by setting $\overline{x}_i=\xi_i+\mathrm{sign}(\xi_i)$ for $i=1,2,\cdots,K$, $\overline{x}_{K+1}=\overline{x}_{K+2}=\overline{x}_K$, and $\overline{x}_i=0$ otherwise, where $\mathrm{sign}(\cdot)$ denotes the classic sign function of a scalar. Clearly, it holds that $|\overline{x}_1|\geq|\overline{x}_2|\geq\cdots\geq|\overline{x}_K|=|\overline{x}_{K+1}|=|\overline{x}_{K+2}|>|\overline{x}_{K+3}|=\cdots=|\overline{x}_n|=0$ and hence $\|x\|_{(K)}$ is not differentiable at $\overline{x}$. Next, we generate a vector $\tilde{\zeta}\in\mathbb{R}^n$ with elements randomly chosen from the uniform distribution on $[1,10]$, and obtain the matrix $A_w$ by letting $A_w =\mathrm{Diag}(\tilde{\zeta})$, where $\mathrm{Diag}(\tilde{\zeta})$ is the diagonal matrix with the components of $\tilde{\zeta}$ on the diagonal. Let $\theta:= \overline{x}^TA_w \overline{x}$. Finally, we compute the nonnegative vector $\zeta\in\mathbb{R}^n$ and $\gamma\in\mathbb{R}$ such that for some $w^1\in\lambda \partial (\|\cdot\|_1)(\overline{x})$ and $w^2\in\lambda \partial (\|\cdot\|_{(K)})(\overline{x})$, it holds that
	\begin{displaymath}
		0_n = w^1 - w^2 + U \zeta + \gamma \overline{x},
	\end{displaymath}
	where $U := A_w \overline{x} \overline{x}^T \mathrm{Diag}(\overline{x})/\theta^2 - 2\mathrm{Diag}(\overline{x})/\theta$. In our implementation, we solve the following convex quadratic program to obtain the desirable $\zeta$ and $\gamma$
	\begin{displaymath}
		\min \limits_{\zeta\in\mathbb{R}^n,~\gamma\in\mathbb{R}} \left\{ \|U\zeta + \gamma\overline{x}+w^1-w^2\|_2^2:\zeta\geq0 \right\}.
	\end{displaymath}
	
	In view of Definition~\ref{definition:lifted_sta_And_critical} (i) and (ii), it is not hard to verify that $\overline{x}$ is a critical but not lifted B-stationary point of problem \eqref{model:PartialL1functionRegularizedproblem} with $A_b =\mathrm{Diag}(\zeta)$, the aforementioned $A_w$ and other parameters of the problem.  Moreover, one can easily see that the global optimum of the resulting problem is equal to $\min\{- \zeta_i/ \Tilde{\zeta_i}: i=1,2,...,n \}$. In this experiment, we fix $n=100,\ K=3$ and $\lambda=1.2$. We first construct an instance of problem \eqref{model:PartialL1functionRegularizedproblem} and a critical but not B-stationary point $\overline{x}$ of it as described above. Then we apply MPGSA and EMPGSA to this instance and start them from the initial point $x^0 = \overline{x}+0.01\beta\mathbf{1}_n$, where $\beta\in\mathbb{R}$ are drawn randomly from the uniform distribution on $[-1,1]$.
	\par Table~\ref{table:SFDA_table2} reports the computational results averaged over 50 random tests. We report the objective value (Objective), CPU time in seconds (Time), the number of nonzero components in the obtained solutions over $n$ (Sparsity) and the number of iterations (Iteration) averaged over 50 random tests. Besides, we also record the number of tests where the difference of the objective values found by the algorithms and the global optimum is less than $10^{-6}$ and report the ratio of them and the total number of tests in percentage (Optimum). One can see that the EMPGSA attains the global optimum in all the tests, which significantly outperforms the MPGSA and thus leads to much lower averaged objective values. This is not surprising because EMPGSA is proved to converges to a lifted B-stationary point, which is generally stronger than a critical point. Although each iteration of EMPGSA involves solving more subproblems than MPGSA, it requires significantly fewer iterations, resulting in a lower total CPU time.
	
	\begin{table}[h] 
		\caption{Computational results for the second experiment.}
		\centering
		\label{table:SFDA_table2}          
			\begin{tabular}{|c|ccccc|}
				\hline
				Alg. & \multicolumn{1}{l}{Objective} & \multicolumn{1}{l}{Optimum} & \multicolumn{1}{l}{Time} & \multicolumn{1}{l}{Sparsity} & \multicolumn{1}{l|}{Iteration}\\ \hline 
				MPGSA for \eqref{model:PartialL1functionRegularizedproblem} & -2.00 & 36\% & 0.0073 & 0.97 & 331 \\ 
				EMPGSA for \eqref{model:PartialL1functionRegularizedproblem} & -2.91 & 100\% & 0.0026 & 0.97 & 11 \\ \hline
			\end{tabular}
	\end{table}
	
\section{Conclusion.}
	In this paper, we consider a class of structured nonsmooth DC and fractional minimization over Riemannian submanifolds as described in problem \eqref{problemn1}. This problem is general and when $h_2=f=0$ and $g$ is a positive constant, it reduces to the model problem which is considered by many existing works on nonsmooth manifold optimization. We first characterize the optimality of problem \eqref{problemn1} by various notions of stationarity and examine their relationships. Then we propose the MPGSA and show that under mild conditions every accumulation point of the solution sequence generated by it is a critical point of problem \eqref{problemn1}. Under certain suitable conditions, the convergence of the entire solution sequence generated by the MPGSA is further established by assuming the KL property of some auxiliary function. Moreover, in the case where $h_2$ is the maximum of finite continuously differentiable convex functions, we also propose the EMPGSA and establish its subsequential convergence to a lifted B-stationary point of problem \eqref{problemn1}. Finally, we illustrate the proposed methods by two preliminary numerical experiments.

\begin{acknowledgements}
	We would like to thank Prof. Man-Cho So for helpful discussions.
\end{acknowledgements}

\bibliography{sample}
\end{document}